\renewcommand\eqref[1]{(\ref{#1})} 
\numberwithin{equation}{section}
\theoremstyle{plain}
\newtheorem{thm}{Theorem}[section]
\newtheorem{prop}[thm]{Proposition}
\newtheorem{cor}[thm]{Corollary}
\newtheorem{lem}[thm]{Lemma}
\theoremstyle{definition}
\newtheorem{defn}[thm]{Definition}
\newtheorem{rem}[thm]{Remark}
\begin{document}

   \title[Layer potentials on homogeneous Carnot groups]
   {Layer potentials, Kac's problem, and refined Hardy inequality on homogeneous Carnot groups}

\author[Michael Ruzhansky]{Michael Ruzhansky}
\address{
  Michael Ruzhansky:
  \endgraf
  Department of Mathematics
  \endgraf
  Imperial College London
  \endgraf
  180 Queen's Gate, London SW7 2AZ
  \endgraf
  United Kingdom
  \endgraf
  {\it E-mail address} {\rm m.ruzhansky@imperial.ac.uk}
  }
\author[Durvudkhan Suragan]{Durvudkhan Suragan}
\address{
  Durvudkhan Suragan:
  \endgraf
  Institute of Mathematics and Mathematical Modelling
  \endgraf
  125 Pushkin str.
  \endgraf
  050010 Almaty
  \endgraf
  Kazakhstan
  \endgraf
  and
  \endgraf
  Department of Mathematics
  \endgraf
  Imperial College London
  \endgraf
  180 Queen's Gate, London SW7 2AZ
  \endgraf
  United Kingdom
  \endgraf
  {\it E-mail address} {\rm d.suragan@imperial.ac.uk}
  }

\thanks{The authors were supported in parts by the EPSRC
 grant EP/K039407/1 and by the Leverhulme Grant RPG-2014-02,
 as well as by the MESRK grant 5127/GF4.}

     \keywords{sub-Laplacian, integral boundary condition, homogeneous Carnot group, stratified group, Newton potential, layer potentials, Hardy inequality}
     \subjclass[2010]{35R03, 35S15}

     \begin{abstract}
     We propose the analogues of boundary layer potentials for the sub-Laplacian on homogeneous Carnot groups/stratified Lie groups and prove continuity results for them. In particular, we show continuity of the single layer potential and establish the Plemelj type jump relations for the double layer potential.
     We prove sub-Laplacian adapted versions of the Stokes theorem as well as of Green's first and second formulae on homogeneous Carnot groups.
     Several applications to boundary value problems are given.
  As another consequence, we derive formulae for traces of the Newton potential for the sub-Laplacian to piecewise smooth surfaces. Using this we construct and study a nonlocal boundary value problem for the sub-Laplacian extending to the setting of the homogeneous Carnot groups M.~Kac's ``principle of not feeling the boundary''. We also obtain similar results for higher powers of the sub-Laplacian.
  Finally, as another application, we prove refined versions of Hardy's inequality and of the uncertainty principle.
     \end{abstract}
     \maketitle
     
     \tableofcontents

\section{Introduction}

The central idea of solving boundary value problems for differential equations in a domain requires the knowledge of the corresponding fundamental solutions, and this idea has a long history dating back to the works of mathematicians such as Gauss \cite{G77, G29} and Green \cite{Gr28}. Nowadays the appearing boundary layer operators and elements of the potential theory serve as major tools for the analysis and construction of solutions to boundary value problems. There is vast literature concerning modern theory of boundary layer operators and potential theory as well as their important applications. In addition, in last decades many interesting and promising works combining the group theory with the analysis of partial differential equations have been presented by many authors. For example, nilpotent Lie groups play an important role in deriving sharp subelliptic estimates for differential operators on manifolds, starting from the seminal paper by Rothschild and Stein \cite{RS}. Moreover, in recent decades, there is a rapidly growing interest for sub-Laplacians on Carnot groups (and also for operators on graded Lie groups), because these operators appear not only in theoretical settings (see e.g. Gromov \cite{Gr96} or Danielli, Garofalo and Nhieu \cite{DGN07} for general expositions from different points of view), but also in application settings such as mathematical models of crystal material and human vision (see, for example, \cite{Chr} and \cite{CMS}). Moreover, sub-Laplacians on homogeneous Carnot groups serve as approximations for general H\"ormander's sums of squares of vector fields on manifolds in view of the Rothschild-Stein lifting theorem \cite{RS} (see also \cite{Folland:RS-CPDE-1977, Rothschild:HN-equiv-CPDE-1983}).

In this paper we discuss elements of the potential theory and the theory of boundary layer operators on homogeneous Carnot groups. As we are not relying on the use of the control distance but on the fundamental solutions everything remains exactly the same (without any changes) if we replace the words `homogeneous Carnot group' by `stratified Lie group'. However, as a larger part of the current literature seems to use the former terminology we also adopt it for this paper.

From a different point of view than ours similar problems have been considered by Folland and Stein \cite{FS}, Geller \cite{Gel90}, Jerison \cite{J}, Romero \cite{R91}, Capogna, Garofalo and Nhieu \cite{CGH08}, Bonfiglioli, Lanconelli and Uguzzoni \cite{BLU07} and a number of other people. A general setting of degenerate elliptic operators was considered by Bony \cite{Bony}.

One of the applications of the analysis of our paper is that we can use elements of the developed potential theory to construct new well-posed (solvable in the classical sense) boundary value problems in addition to using it in solving the known problems such as for Dirichlet and Neumann sub-Laplacians.
Thus, we rely on the developed potential theory to derive trace formulae for the Newton potential of the sub-Laplacian to piecewise smooth surfaces and use these conditions to construct the analogue of Kac's boundary value problem in the setting of homogeneous Carnot groups as well as Kac's ``principle of not feeling the boundary'' for the sub-Laplacian.
As in the classical case, the Kac boundary value problem also serves as an example of a boundary value problem which is explicitly solvable in any domain.

For example, for a bounded domain of the Euclidean space $\Omega\subset\mathbb R^{d},\,\, d\geq 2,$ it is very well known that the
solution to the Laplacian equation in $\mathbb R^{d},$
\begin{equation}
\Delta u(x)=f(x), \,\,\,\ x\in\Omega,\label{15}
\end{equation}
is given by the Green formula (or the Newton potential formula)
\begin{equation}
u(x)=\int_{\Omega}\varepsilon_{d}(x-y)f(y)dy,\,\, x\in\Omega,
\label{14}
\end{equation}
for suitable functions $f$ supported in $\Omega$.
Here $\varepsilon_{d}$ is the fundamental solution to $\Delta$ in $\mathbb R^d$ given by
\begin{equation}
\varepsilon_{d}(x-y)=\left\{
\begin{array}{ll}
    \frac{1}{(2-d)s_{d}}\frac{1}{|x-y|^{d-2}},\,\,d\geq 3,\\
    \frac{1}{2\pi}\log|x-y|, \,\,d=2, \\
\end{array}
\right.
\label{EQ:fs}
\end{equation}
where $s_{d}=\frac{2\pi^{\frac{d}{2}}}{\Gamma(\frac{d}{2})}$ is the surface area of the unit sphere in
$\mathbb R^{d}$.
An interesting  question having several important applications is what boundary condition can be put on $u$ on
the (piecewise smooth) boundary $\partial\Omega$ so that equation \eqref{15} complemented by this
boundary condition would have the solution in $\Omega$ still given by the same formula \eqref{14},
with the same kernel $\varepsilon_d$ given by \eqref{EQ:fs}.
This amounts to finding the trace of the Newton potential \eqref{14}
to the boundary surface $\partial\Omega$.

It turns out that the answer to these questions is the integral boundary condition
\begin{equation}
-\frac{1}{2}u(x)+\int_{\partial\Omega}\frac{\partial\varepsilon_{d}(x-y)}{\partial n_{y}}u(y)d S_{y}-
\int_{\partial\Omega}\varepsilon_{d}(x-y)\frac{\partial u(y)}{\partial n_{y}}d S_{y}=0,\,\,
x\in\partial\Omega,
\label{16}
\end{equation}
where $\frac{\partial}{\partial n_{y}}$ denotes the outer normal
derivative at a point $y$ on $\partial\Omega$.
Thus, the trace of the Newton potential
\eqref{14} on the boundary surface $\partial\Omega$ is determined by \eqref{16}.

The boundary condition \eqref{16} appeared in M. Kac's work \cite{Kac1} where
he called it and the subsequent spectral analysis ``the principle of not feeling the boundary''.
This was further expanded in Kac's book \cite{Kac2}
with several further applications to the spectral theory and the asymptotics of the Weyl's eigenvalue
counting function.
Spectral problems related to the boundary value problem \eqref{15}, \eqref{16} were considered in the papers \cite{KS1}, \cite{KS3}, \cite{Ruzhansky-Suragan-log} and \cite{Ruzhansky-Suragan:NP}.
In general, the boundary value problem \eqref{15}, \eqref{16} has various interesting properties and applications
(see, for example, Kac \cite{Kac1,Kac2} and Saito \cite{Sa}).
The boundary value problem \eqref{15}, \eqref{16} can also be generalised for higher powers of the Laplacian, see \cite{KS3, KS2}.

The analogues of the problem \eqref{15}, \eqref{16} for the Kohn Laplacian
and its powers on the Heisenberg group have been recently investigated by the authors in \cite{Ruzhansky-Suragan:Kohn-Laplacian}, see also \cite{FRcr} for the
more general pseudo-differential analysis in the setting of the Heisenberg group.

One of the aims of this paper is to construct the analogues of boundary layer potentials for the sub-Laplacian on homogeneous Carnot groups and study continuity results for them, as well as to obtain an analogue of the boundary value problem \eqref{15}, \eqref{16} on the homogeneous Carnot groups.

For the convenience of the reader let us now briefly recapture the main results of this paper.
Let $\mathbb{G}$ be a homogeneous Carnot group of homogeneous dimension
$Q\geq 3$ with Haar measure $d\nu$, and let $X_{1},\ldots,X_{N_{1}}$ be left-invariant vector fields giving the first
stratum, with the sub-Laplacian
\begin{equation*}\label{sublap0}
\mathcal{L}=\sum_{k=1}^{N_{1}}X_{k}^{2}.
\end{equation*}
For precise definitions we refer to Section \ref{SEC:prelim}.
Throughout this paper $\Omega\subset\mathbb{G}$ will be an admissible domain:
\begin{defn}\label{DEF:adomain}
We say that an open set $\Omega\subset\mathbb{G}$ is an {\em admissible domain} if it is bounded and if its boundary $\partial\Omega$ is piecewise smooth and simple, that is, it has no self-intersections.
\end{defn}
The condition for the boundary to be simple amounts to $\partial\Omega$ being orientable.
Thus, in this paper:
\begin{itemize}
\item
We establish in Proposition \ref{stokes} the divergence formula (a version of the Stokes theorem)
in the form
$$
\int_{\Omega}\sum_{k=1}^{N_{1}}X_{k}f_{k}d\nu=
\int_{\partial\Omega}\sum_{k=1}^{N_{1}}\langle f_{k}X_{k},d\nu\rangle,
$$
where the form $\langle X_{k},d\nu\rangle$ is obtained from the natural pairing of $d\nu$ (viewed as a form) with $X_{k}$, see \eqref{Xjdnu0}--\eqref{Xk0} for the precise formula.
We obtain the analogue of Green's first formula:
if $v\in C^{1}(\Omega)\bigcap C(\overline{\Omega})$ and $u\in C^{2}(\Omega)\bigcap C^{1}(\overline{\Omega})$, then
\begin{equation} \label{g10}
\int_{\Omega}\left((\mathcal{\widetilde{\nabla}}v) u+v\mathcal{L}u\right) d\nu=\int_{\partial\Omega}v\langle \mathcal{\widetilde{\nabla} }u,d\nu\rangle,
\end{equation}
where $$\widetilde{\nabla} u=\sum_{k=1}^{N_{1}} (X_{k} u) X_{k},$$
see Proposition \ref{green1}. Consequently, we
apply it to give simple proofs of the existence and uniqueness
for some boundary value problems of Dirichlet, von Neumann, mixed Dirichlet-Neumann, and Robin types for the sub-Laplacian, as well for the sub-Laplacian (stationary)
Schr\"odinger operator,
see Section \ref{SEC:2}. These formulations (except for the Dirichlet one) appear to be new. We also apply  \eqref{g10} to obtain a refined ``local" Hardy inequality in Section \ref{Sec7}.
The following analogue of Green's second formula is also established:
if $u,v\in C^{2}(\Omega)\bigcap C^{1}(\overline{\Omega}),$ then
\begin{equation}\label{g20}
\int_{\Omega}(u\mathcal{L}v-v\mathcal{L}u)d\nu
=\int_{\partial\Omega}(u\langle\widetilde{\nabla}  v,d\nu\rangle-v\langle \widetilde{\nabla}  u,d\nu\rangle),
\end{equation}
see Proposition \ref{green2}. As a consequence we obtain several
representation formulae for functions in $\Omega$.

We note that up to here the obtained formulae can be also formulated in terms of
the perimeter measure, see e.g. \cite{CGH08}. We outline a relation between these two integrations in Section \ref{SEC:mes}. However, the advantage of using the language of differential forms is in the possibility of making coordinate free formulations which will prove to be effective in the subsequent applications.

\item
We discuss the single layer potentials (for the sub-Laplacian $\mathcal{L}$) in the form
\begin{equation*}\label{S0}
\mathcal{S}_{j} u(x)=\int_{\partial\Omega} u(y) \varepsilon(x,y) \langle X_j, d\nu(y)\rangle,\quad j=1,...,N_{1},
\end{equation*}
where $$\varepsilon(x,y)=\varepsilon(y,x)=\varepsilon(x^{-1}y)$$ is the
fundamental solution of the sub-Laplacian $\mathcal{L}$ on $\mathbb G$,
$$\mathcal{L}\varepsilon=\delta,$$
see \eqref{fundsol}
for its formula.
The advantage of this definition
is that it becomes integrable over the whole boundary including
also the characteristic points (in comparison, for example, to the one used by Jerison
\cite{J} which is not integrable over characteristic points).
This becomes very useful for subsequent analysis. We show that if $u\in L^{\infty}(\partial\Omega)$ then
$\mathcal{S}_{j}u$ is continuous (Theorem
\ref{S}).
As the double layer potential we consider
\begin{equation*}\label{EQ:dp0}
\mathcal{D}u(x)=\int_{\partial \Omega} u(y)\langle \widetilde{\nabla} \varepsilon(x,y),d\nu(y)\rangle,
\end{equation*}
where $$\widetilde{\nabla} \varepsilon=\sum_{k=1}^{N_{1}} (X_{k}\varepsilon) X_{k},$$
and we establish its jump relations in Theorem \ref{doublelayer}.
We use these potentials extensively for further analysis.

\item
We establish trace formulae for the Newton potential operator
$$\mathcal N f(x)=\int_{\mathbb G}
\varepsilon(x,y) f(y) d\nu(y)$$
to arbitrary bounded piecewise smooth surfaces $\partial \Omega$ when $\text{supp} f\subset \Omega\subset \mathbb G$ and $f$ is in the Folland-Stein's H\"older space.
We then use this to introduce a version of Kac's boundary value problem on homogeneous Carnot groups and Kac's principle of ``not feeling the boundary'' for the sub-Laplacian $\mathcal{L}$,
see Section \ref{SEC:5}.

\item
We carry out the above analysis of traces and Kac's problem also for poly-sub-Laplacians
$\mathcal{L}^{m}$, see Section \ref{SEC:6}, for all integers $m\geq 1$.

\item In Section \ref{Sec7} we give another example of using the techniques from this paper, in particular of Green's first formula, to obtain a ``local" Hardy inequality on $\mathbb G$.
Namely, we show that for $\alpha\in \mathbb{R}$, $\alpha>2-Q$ and $Q\geq 3$, we have
\begin{multline*}\label{LH20}
\qquad \qquad \int_{\Omega}d^{\alpha} |\nabla_{\mathbb G} u|^{2} \,d\nu\geq
\left(\frac{\alpha+Q-2}{2}\right)^{2}\int_{\Omega}
d^{\alpha}\frac{|\nabla_{\mathbb G} d|^{2}}{d^{2}}
|u|^{2}\,d\nu\\+
\frac{\alpha+Q-2}{2(Q-2)}\int_{\partial\Omega}d^{\alpha+Q-2}|u|^{2}
\langle\widetilde{\nabla}d^{2-Q},
d\nu\rangle,
\end{multline*}
for all $u\in C^{1}(\Omega)\bigcap C(\overline{\Omega})$, where $d$ is the
the $\mathcal{L}$-gauge distance and $$\nabla_{\mathbb
G}=(X_{1},\ldots,X_{N_{1}}).$$ If $u=0$ on $\partial\Omega$, the second
(boundary) term vanishes, and the constant
$\left(\frac{Q+\alpha-2}{2}\right)^{2}$ in the first term is known to be sharp.
Since this last (boundary) term can be $\geq 0$, this provides a refinement to
the known Hardy inequality. Consequently, we also obtain refined versions of
(local) uncertainty principles on $\mathbb G$.

For other types of the Hardy and other inequalities on Carnot groups as well as for a literature review on this subject we refer to \cite{Ruzhansky-Suragan:JDE}, as well as to \cite{Ruzhansky-Suragan:equalities} for the setting of general homogeneous groups.

\item We discuss how all the results can be extended to operators
$$\mathcal{L}_{A}=\sum_{k,j=1}^{N_{1}}a_{k,j}X_{k}X_{j},$$
where $A=(a_{k,j})_{1\leq k,j\leq N_{1}}$ is a positive-definite symmetric matrix, at least in
the setting of free homogeneous Carnot groups; see Section \ref{SEC:prelim}.

\end{itemize}

In Section \ref{SEC:prelim} we very briefly review the main concepts of homogeneous Carnot groups and fix the notation.
 In Section \ref{SEC:2} we derive versions of Green's first and second formulae,
and give applications to boundary value problems of different types. Boundary layer potentials for the sub-Laplacian on homogeneous Carnot groups are presented and analysed in Section \ref{SEC:3}.
We derive trace formulae and give the analogues of Kac's boundary value problem for the sub-Laplacian and higher powers of the sub-Laplacian in Section \ref{SEC:5} and \ref{SEC:6}, respectively. In Section \ref{Sec7} we make a short discussion of a ``local'' Hardy inequality and a ``local'' uncertainty principle.

\medskip

The authors would like to thank Nicola Garofalo and Valentino Magnani for enlightening discussions.

\section{Preliminaries}
\label{SEC:prelim}

There are several equivalent definitions of homogeneous Carnot groups.
We follow the definition in \cite{BLU07} (but see also e.g. \cite{FR14, FR} for the Lie algebra point of view):

\begin{defn} \label{maindef}
A Lie group $\mathbb{G}=(\mathbb{R}^{N},\circ)$ is called homogeneous Carnot group
(or homogeneous stratified group) if it satisfies the following conditions:

(a) For some natural numbers $N_{1}+...+N_{r}=N$
the decomposition $\mathbb{R}^{N}=\mathbb{R}^{N_{1}}\times...\times\mathbb{R}^{N_{r}}$ is valid, and
for every $\lambda>0$ the dilation $\delta_{\lambda}: \mathbb{R}^{N}\rightarrow \mathbb{R}^{N}$
given by
$$\delta_{\lambda}(x)\equiv\delta_{\lambda}(x^{(1)},...,x^{(r)}):=(\lambda x^{(1)},...,\lambda^{r}x^{(r)})$$
is an automorphism of the group $\mathbb{G}.$ Here $x^{(k)}\in \mathbb{R}^{N_{k}}$ for $k=1,...,r.$

(b) Let $N_{1}$ be as in (a) and let $X_{1},...,X_{N_{1}}$ be the left invariant vector fields on $\mathbb{G}$ such that
$X_{k}(0)=\frac{\partial}{\partial x_{k}}|_{0}$ for $k=1,...,N_{1}.$ Then
$${\rm rank}({\rm Lie}\{X_{1},...,X_{N_{1}}\})=N,$$
for every $x\in\mathbb{R}^{N},$ i.e. the iterated commutators
of $X_{1},...,X_{N_{1}}$ span the Lie algebra of $\mathbb{G}.$
\end{defn}

That is, we say that the triple $\mathbb{G}=(\mathbb{R}^{N},\circ, \delta_{\lambda})$ is a homogeneous Carnot group.
In Definition \ref{maindef}, $r$ is called a step of $\mathbb{G}$ and the left invariant vector fields $X_{1},...,X_{N_{1}}$ are called
the (Jacobian) generators of $\mathbb{G}$. We also note that a Lie group satisfying the condition (a) is called a homogeneous Lie group (modelled on $\mathbb{R}^{N}$ but this is not restricting the generality).
The number
$$Q=\sum_{k=1}^{r}kN_{k}.$$
is called the homogeneous dimension of $G$.

\smallskip
Throughout this paper we assume $Q\geq 3$. This is not very restrictive since it
effectively rules out only the spaces $\mathbb R$ and $\mathbb R^{2}$ where the
fundamental solution assumes a different form and where most things are already known.

\smallskip
The second order differential operator
\begin{equation}\label{sublap}
\mathcal{L}=\sum_{k=1}^{N_{1}}X_{k}^{2}
\end{equation}
is called the (canonical) sub-Laplacian on $\mathbb{G}$.
In this paper we mainly consider the operator \eqref{sublap}.
The sub-Laplacian $\mathcal{L}$ is a left invariant homogeneous hypoelliptic differential operator and it is known that $\mathcal{L}$ is elliptic if and only if the step of $\mathbb{G}$ is equal to 1.

In general, many of our results can be extended to any second order hypoelliptic differential operators which are ``equivalent" to the sub-Laplacian $\mathcal{L}$. Let us very briefly discuss this matter in the sprit of \cite{BLU07}.

Let $A=(a_{k,j})_{1\leq k,j\leq N_{1}}$ be a positive-definite symmetric matrix. Consider the following second order hypoelliptic differential operator based on the matrix $A$ and the vector fields $\{X_{1},...,X_{N_{1}}\}$, given by
\begin{equation}
\mathcal{L}_{A}=\sum_{k,j=1}^{N_{1}}a_{k,j}X_{k}X_{j}.
\end{equation}
For instance, in the Euclidean case ($N_{1}=N$), that is, for $\mathbb{G}=(\mathbb{R}^{N},+),$
the constant coefficient second order operator of elliptic type
$$
\Delta_{A}=
\sum_{k,j=1}^{N}a_{k,j}\frac{\partial^{2}}{\partial x_{k}\partial x_{j}},
$$
is transformed into
$$
\Delta=
\sum_{k=1}^{N}\frac{\partial^{2}}{\partial x^{2}_{k}},
$$
under a linear change of coordinates in $\mathbb{R}^{N}.$
Thus, the operator $\Delta_{A}$ is ``equivalent" to the operator
$\Delta$ (in a new system of coordinates).
Therefore, an essential question is whether this idea can be carried out
for $\mathcal{L}_{A}$ on the group $\mathbb{G}$?
In general, classical changes of basis fail to apply, that is, an additional assumption on the group $\mathbb{G}$ is needed in order to preserve the above idea. For example, the idea is preserved if $\mathbb{G}$ is a free Carnot group. Recall that the Carnot group $\mathbb{G}$ is called a free Carnot group if its Lie algebra is (isomorphic to) a free Lie algebra. For instance, the Heisenberg group $\mathbb{H}^{1}$ is a free Carnot group.
\begin{thm}[\cite{BLU07}]
Let $\mathbb{G}$ be a free homogeneous Carnot group, and let $A$ be a given positive-definite symmetric matrix.
Let $X=\{X_{1},...,X_{N_{1}}\}$ be left-invariant vector fields in the first stratum of the free homogeneous Carnot group $\mathbb{G}$ with the corresponding sub-Laplacian \eqref{sublap}.
Let
\begin{equation}
Y_{k}:=\sum_{j=1}^{N_{1}}\left( A^{\frac{1}{2}}\right)_{k,j}X_{j},\quad k=1,...,N_{1}.
\end{equation}
Consider the related second order differential operator
$$
\mathcal{L}_{A}=
\sum_{k=1}^{N_{1}}Y^{2}_{k}=\sum_{k,j=1}^{N_{1}}a_{k,j}X_{k}X_{j}.
$$
Then there exists a Lie group automorphism $T_{A}$ of $\mathbb{G}$ such that
$$Y_{k}(u\circ T_{A})=(X_{k}u)\circ T_{A},\quad k=1,...,N_{1},$$
$$\mathcal{L}_{A}(u\circ T_{A})=(\mathcal{L}u)\circ T_{A},$$
for every smooth function $u:\mathbb{G}\rightarrow \mathbb{R}.$ Moreover, $T_{A}$ has polynomial component
functions and commutes with the dilations of $\mathbb{G}$.
\end{thm}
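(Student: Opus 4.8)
The plan is to read off $T_A$ from a Lie algebra automorphism built via the universal property of free nilpotent Lie algebras. Let $\mathfrak{g}$ be the Lie algebra of $\mathbb{G}$, with first stratum $V_{1}=\operatorname{span}\{X_{1},\dots,X_{N_{1}}\}$, and note that the elements $Y_{1},\dots,Y_{N_{1}}$ (regarded as members of $V_{1}$, i.e.\ $Y_{k}=\sum_{j}(A^{1/2})_{k,j}X_{j}$) again form a basis of $V_{1}$, since $A^{1/2}$ is invertible and symmetric as $A$ is positive-definite and symmetric. Let $\psi\colon V_{1}\to V_{1}$ be the linear isomorphism with $\psi(Y_{k})=X_{k}$. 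Because $\mathbb{G}$ is a \emph{free} homogeneous Carnot group, $\mathfrak{g}$ is a free nilpotent Lie algebra on the generators $X_{1},\dots,X_{N_{1}}$, so by the universal property $\psi$ extends uniquely to a Lie algebra homomorphism $\phi\colon\mathfrak{g}\to\mathfrak{g}$; applying the same to $\psi^{-1}$ and invoking uniqueness shows $\phi$ is an automorphism. Since $\phi(V_{1})=V_{1}$ and the stratification satisfies $V_{j}=[V_{1},V_{j-1}]$, the automorphism $\phi$ preserves every stratum $V_{j}$, hence commutes with the dilation derivation $\delta$ defined by $\delta|_{V_{j}}=j\,\mathrm{Id}$.

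Next I would integrate $\phi$ to the group. As $\mathbb{G}$ is connected, simply connected and nilpotent, $\exp\colon\mathfrak{g}\to\mathbb{G}$ is a global diffeomorphism, and $\phi$ integrates to the Lie group automorphism $T_{A}:=\exp\circ\,\phi\circ\exp^{-1}$ with $d(T_{A})_{e}=\phi$. Because $\phi$ commutes with $\delta$, the map $T_{A}$ commutes with the dilations $\delta_{\lambda}$ of $\mathbb{G}$; being smooth and $\delta_{\lambda}$-equivariant, each of its component functions is $\delta$-homogeneous of a positive integer degree and hence a polynomial (indeed, in exponential coordinates $T_{A}$ is simply the linear map $\phi$). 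The vector-field intertwining is then the standard computation: for $\xi\in\mathfrak{g}$ and the associated left-invariant vector field $X_{\xi}$,
\[
X_{\xi}(u\circ T_{A})(g)=\tfrac{d}{dt}\big|_{0}\,u\big(T_{A}(g)\exp(t\phi\xi)\big)=(X_{\phi\xi}u)\big(T_{A}(g)\big),
\]
so, taking $\xi=Y_{k}$ (as an element of $V_{1}$) and using $\phi(Y_{k})=X_{k}$, we get $Y_{k}(u\circ T_{A})=(X_{k}u)\circ T_{A}$ for $k=1,\dots,N_{1}$.

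Finally, the operator identity follows by iterating this relation: $Y_{k}^{2}(u\circ T_{A})=Y_{k}\big((X_{k}u)\circ T_{A}\big)=(X_{k}^{2}u)\circ T_{A}$, and summing over $k$ gives $\mathcal{L}_{A}(u\circ T_{A})=(\mathcal{L}u)\circ T_{A}$. That $\mathcal{L}_{A}=\sum_{k}Y_{k}^{2}=\sum_{k,j}a_{k,j}X_{k}X_{j}$ is the elementary identity $(A^{1/2})^{\mathsf{T}}A^{1/2}=A$, using the symmetry of $A^{1/2}$.

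I expect the only genuinely nontrivial point to be the extension step --- producing a Lie algebra automorphism of $\mathfrak{g}$ out of the prescribed linear map $\psi$ on $V_{1}$ --- which is exactly where freeness of $\mathbb{G}$ is indispensable: for a non-free Carnot group a linear change of frame in the first stratum need not be compatible with the bracket relations, and no such $T_{A}$ need exist. The remaining care is purely bookkeeping: fixing the convention so that the push-forward sends $Y_{k}\mapsto X_{k}$ (rather than $X_{k}\mapsto Y_{k}$), and checking that the smooth, dilation-equivariant map $T_{A}$ is polynomial in the coordinates fixed in Definition \ref{maindef}.
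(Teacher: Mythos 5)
This theorem is quoted in the paper from \cite{BLU07} without proof, so there is no in-paper argument to compare against; your proposal is correct and is essentially the standard argument given in that reference (Chapter 16.3): lift the linear change of frame on the first stratum to a Lie algebra automorphism using freeness, exponentiate, and push the intertwining through the left-invariant vector fields. The only point worth stating slightly more carefully is the universal-property step: the universal property extends an assignment on the free generators $X_{1},\dots,X_{N_{1}}$, so one should either observe that $\psi$ is determined by its values $\psi(X_{k})$ on those generators, or note that any basis of $V_{1}$ (in particular the $Y_{k}$) is again a set of free generators --- your argument implicitly does the former and goes through.
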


It can be also proved that if $\mathbb{G}$ is not a free Carnot group, then the automorphism $T_{A}$
may not exist. However, it can be shown that for any homogeneous Carnot group $\mathbb{G}$ there exists a different homogeneous Carnot group $\mathbb{G}_{\ast}=(\mathbb{R}^{N}, \ast,\delta_{\lambda})$, that is, with the
same underlying manifold $\mathbb{R}^{N}$ and the same group of dilations $\delta_{\lambda}$ as $\mathbb{G},$ and a Lie-group isomorphism from
$\mathbb{G}$ to $\mathbb{G}_{\ast}$ turning $\mathcal{L}_{A}$ (of $\mathbb{G}$) into $\mathcal{L}$ (of $\mathbb{G}_{1}$) (see \cite[Chapter 16.3]{BLU07}).
See also \cite{ruzhansky+turunen-book} for analogues of such constructions on compact Lie groups.

\section{Sub-Laplacian Green's formulae and their consequences}
\label{SEC:2}

In this section we will give some important lemmas for integration on $\mathbb{G}$ and their consequences for a number of boundary value problems of different types. These results will be also used in the other sections.

It is known that the sub-Laplacian has a unique fundamental solution
$\varepsilon$ on $\mathbb{G}$ (see \cite{Fol75}),
$$
\mathcal{L}\varepsilon=\delta,
$$
and $\varepsilon(x,y)=\varepsilon(y^{-1}x)$ is homogeneous of degree $-Q+2$ and represented in the form
\begin{equation}\label{fundsol}
\varepsilon(x,y)= [d(x,y)]^{2-Q},
\end{equation}
where $d$ is the $\mathcal{L}$-gauge.

We recall that the $\mathcal{L}$-gauge (see \cite{BLU07}) is a symmetric homogeneous (quasi-) norm on the homogeneous Carnot
group $\mathbb{G}=(\mathbb{R}^{N},\circ, \delta_{\lambda})$, that is,
\begin{itemize}
\item $d(x)>0$ if and only if $x\neq 0,$
\item $d(\delta_{\lambda}(x))=\lambda d(x)$ for all $\lambda>0$ and $x\in \mathbb{G}$,
\item $d(x^{-1})=d(x)$ for all $x\in\mathbb{G}$.
\end{itemize}
Throughout this paper we keep the notation $d$ for the $\mathcal{L}$-gauge and use the following quasi-metric properties:
\begin{itemize}
\item $d(x,y)\geq 0$, and $d(x,y)=0$ if and only if $x=y$ for all $x,y\in \mathbb{G}$.
\item There exists a positive constant $C\geq 1$ such that
$$d(x,y)\leq C(d(x,z)+d(z,y))$$
 for all $x,y,z\in \mathbb{G}$.
\end{itemize}

Let $Q\geq 3$ be the homogeneous dimension of $\mathbb{G}$, $\partial\Omega$ the boundary of an admissible domain $\Omega$ in $\mathbb{G}$, $d\nu$ the volume element on $\mathbb{G}$, $X=\{X_{1},...,X_{N_{1}}\}$ left-invariant vector fields in the first stratum of a homogeneous Carnot group $\mathbb{G}$, and $\langle X_{j}, d\nu\rangle$ the natural pairing between vector fields and differential forms,
see \eqref{Xk}, \eqref{theta} and \eqref{Xjdnu} for the derivation of the exact formula: more precisely,
as we will see in the proof of Proposition \ref{stokes}, we can write
\begin{equation}\label{Xjdnu0}
\langle X_{k}, d\nu(x)\rangle=\bigwedge_{j=1,j\neq k}^{N_{1}}dx_{j}^{(1)}\bigwedge_{l=2}^{r}
\bigwedge_{m=1}^{N_{l}}\theta_{l,m},
\end{equation}
with
\begin{equation}\label{theta0}
\theta_{l,m}=-\sum_{k=1}^{N_{1}}a_{k,m}^{(l)}(x^{(1)},\ldots,x^{(l-1)})
 dx_{k}^{(1)}+dx_{m}^{(l)},\,\,l=2,\ldots,r,\,\,m=1,\ldots,N_{l},
\end{equation}
where $a_{k,m}^{(l)}$ is a $\delta_{\lambda}$-homogeneous polynomial of degree $l-1$
such that
\begin{equation}\label{Xk0}
X_{k}=\frac{\partial}{\partial x_{k}^{(1)}}+
\sum_{l=2}^{r}\sum_{m=1}^{N_{l}}a_{k,m}^{(l)}(x^{(1)},...,x^{(l-1)})
\frac{\partial}{\partial x_{m}^{(l)}}.
\end{equation}
We also recall that the standard Lebesque measure on $\mathbb R^{N}$ is the Haar measure for $\mathbb{G}$ (see, e.g. \cite[Proposition 1.3.21]{BLU07}).
First we prove some important properties for integration. The following proposition can be also obtained using an abstract Cartan formula, see Section \ref{SEC:mes}, but here we give a direct explicit proof in order to derive an explicit formula for the forms $\theta_{l,m}$ in \eqref{theta0}.

The advantage of using the language of differential forms here and in subsequent Green's formulae (e.g. over the perimeter or the surface measure) is in the possibility of making coordinate free formulations which will prove to be effective in the proof of the Hardy inequality in the sequel.

\begin{prop}[Divergence formula]
\label{stokes}
Let $\Omega\subset\mathbb{G}$ be an admissible domain. Let $f_{k}\in C^{1}(\Omega)\bigcap C(\overline{\Omega}),\,k=1,\ldots,N_{1}$. Then for each $k=1,\ldots,N_{1},$ we have
\begin{equation}\label{EQ:S1}
\int_{\Omega}X_{k}f_{k}d\nu=
\int_{\partial\Omega}f_{k} \langle X_{k},d\nu\rangle.
\end{equation}
Consequently, we also have
\begin{equation}\label{EQ:S2}
\int_{\Omega}\sum_{k=1}^{N_{1}}X_{k}f_{k}d\nu=
\int_{\partial\Omega}\sum_{k=1}^{N_{1}}f_{k} \langle X_{k},d\nu\rangle.
\end{equation}
\end{prop}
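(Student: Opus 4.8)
The plan is to reduce the identity \eqref{EQ:S1} to the classical Gauss--Green (divergence) theorem on $\mathbb{R}^{N}$, using the key algebraic observation that each generator $X_{k}$ is \emph{divergence-free} with respect to the Lebesgue measure (which by \cite[Proposition 1.3.21]{BLU07} is the Haar measure). Indeed, from the representation \eqref{Xk0} the Euclidean divergence of $X_{k}$ is
$$
\operatorname{div}X_{k}=\frac{\partial}{\partial x_{k}^{(1)}}(1)+\sum_{l=2}^{r}\sum_{m=1}^{N_{l}}\frac{\partial}{\partial x_{m}^{(l)}}\,a_{k,m}^{(l)}\bigl(x^{(1)},\ldots,x^{(l-1)}\bigr)=0,
$$
since $a_{k,m}^{(l)}$ involves only the lower strata $x^{(1)},\ldots,x^{(l-1)}$ and hence does not depend on $x^{(l)}$. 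Consequently, for $f_{k}\in C^{1}(\Omega)\cap C(\overline{\Omega})$ one has the pointwise identity $X_{k}f_{k}=X_{k}f_{k}+f_{k}\operatorname{div}X_{k}=\operatorname{div}(f_{k}X_{k})$, the divergence on the right being the ordinary one on $\mathbb{R}^{N}$, and $f_{k}X_{k}$ being a vector field continuous on $\overline{\Omega}$ and $C^{1}$ in $\Omega$. Applying the classical divergence theorem on the bounded domain $\Omega$, whose boundary is piecewise smooth (in particular Lipschitz) and orientable (this is exactly the simplicity required in Definition \ref{DEF:adomain}), then yields
$$
\int_{\Omega}X_{k}f_{k}\,d\nu=\int_{\Omega}\operatorname{div}(f_{k}X_{k})\,d\nu=\int_{\partial\Omega}f_{k}\,\langle X_{k},n\rangle\,dS,
$$
where $n$ is the outer unit normal and $dS$ the surface measure on $\partial\Omega$.

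It then remains to recognise $\langle X_{k},n\rangle\,dS$ as the restriction to $\partial\Omega$ of the $(N-1)$-form obtained by contracting the volume form $d\nu$ with $X_{k}$; this is the coordinate-free face of Gauss--Green, $\langle X_{k},n\rangle\,dS=(\iota_{X_{k}}d\nu)\big|_{\partial\Omega}$, and one simply \emph{defines} $\langle X_{k},d\nu\rangle:=\iota_{X_{k}}d\nu$. (Equivalently, one may bypass the Euclidean theorem entirely and argue intrinsically: since $df_{k}\wedge d\nu=0$ on the $N$-manifold, one gets $d\bigl(f_{k}\,\iota_{X_{k}}d\nu\bigr)=(X_{k}f_{k})\,d\nu+f_{k}(\operatorname{div}X_{k})\,d\nu=(X_{k}f_{k})\,d\nu$, and Stokes' theorem gives \eqref{EQ:S1} directly, the orientation being handled automatically; this is the abstract Cartan-formula route mentioned before Proposition \ref{stokes}.) The one genuinely computational point — and the place where the signs must be tracked carefully — is to expand $\iota_{X_{k}}d\nu$ into the explicit shape \eqref{Xjdnu0}. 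Writing $d\nu=\bigwedge_{j=1}^{N_{1}}dx_{j}^{(1)}\wedge\bigwedge_{l=2}^{r}\bigwedge_{m=1}^{N_{l}}dx_{m}^{(l)}$ and using $\iota_{X_{k}}=\iota_{\partial_{x_{k}^{(1)}}}+\sum_{l,m}a_{k,m}^{(l)}\iota_{\partial_{x_{m}^{(l)}}}$ together with the contraction rule $\iota_{\partial_{j}}(dx_{1}\wedge\cdots\wedge dx_{N})=(-1)^{j-1}dx_{1}\wedge\cdots\wedge\widehat{dx_{j}}\wedge\cdots\wedge dx_{N}$, one verifies that in the product $\bigwedge_{j\neq k}dx_{j}^{(1)}\wedge\bigwedge_{l,m}\theta_{l,m}$, with $\theta_{l,m}$ as in \eqref{theta0}, any term that picks up two or more of the $dx_{k}^{(1)}$-parts of the $\theta_{l,m}$'s vanishes by antisymmetry; hence only the ``diagonal'' term $\bigwedge_{j\neq k}dx_{j}^{(1)}\wedge\bigwedge_{l,m}dx_{m}^{(l)}$ and the terms with exactly one substitution $dx_{m}^{(l)}\mapsto-a_{k,m}^{(l)}dx_{k}^{(1)}$ survive, and these reassemble — modulo the sign conventions built into \eqref{theta0} — into $\iota_{\partial_{x_{k}^{(1)}}}d\nu+\sum_{l,m}a_{k,m}^{(l)}\iota_{\partial_{x_{m}^{(l)}}}d\nu=\iota_{X_{k}}d\nu$. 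This establishes \eqref{EQ:S1}.

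Two minor points complete the argument. First, to get the divergence theorem for $f_{k}$ merely in $C^{1}(\Omega)\cap C(\overline{\Omega})$ rather than smooth up to the boundary, one exhausts $\Omega$ by smoothly bounded subdomains $\Omega_{j}\uparrow\Omega$, applies the above on each $\Omega_{j}$, and passes to the limit using monotone/dominated convergence on the left and the uniform continuity of $f_{k}$ on $\overline{\Omega}$ on the right (or one simply invokes the Lipschitz-domain version of Gauss--Green). Second, \eqref{EQ:S2} follows from \eqref{EQ:S1} by summing over $k=1,\ldots,N_{1}$. I expect the explicit sign computation for \eqref{Xjdnu0} and this mild boundary-regularity step to be the only real friction: the conceptual heart — ``first-stratum vector fields are divergence-free, so the sub-Laplacian divergence formula \emph{is} the Euclidean one'' — is immediate once \eqref{Xk0} is available.
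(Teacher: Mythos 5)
Your proof is correct, but it takes a genuinely different route from the one in the paper. The paper's proof stays entirely inside the calculus of differential forms: it first derives the identity \eqref{df} expressing $df$ in the co-frame $\{dx_{j}^{(1)},\theta_{l,m}\}$ dual (in the relevant sense) to the Jacobian basis, observes that $d\nu=\bigwedge_{j}dx_{j}^{(1)}\bigwedge_{l,m}\theta_{l,m}$, \emph{defines} $\langle X_{k},d\nu\rangle$ by the explicit formula \eqref{Xjdnu}, verifies by a direct wedge computation that $d(f_{k}\langle X_{k},d\nu\rangle)=X_{k}f_{k}\,d\nu$, and only then invokes Stokes. You instead reduce everything to the classical Euclidean Gauss--Green theorem via the single structural observation that $\operatorname{div}X_{k}=0$ (because $a_{k,m}^{(l)}$ does not depend on $x^{(l)}$), and then reconcile the boundary integrand $\langle X_{k},n\rangle\,dS$ with the contraction $\imath_{X_{k}}d\nu$ and with the explicit shape \eqref{Xjdnu0}. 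This is essentially the ``abstract'' argument the authors themselves sketch in the appendix (Section \ref{SEC:mes}) via the Cartan formula, and which they explicitly say they avoid in the main text ``in order to derive an explicit formula for the forms $\theta_{l,m}$''; indeed your parenthetical intrinsic variant $d(f_{k}\,\imath_{X_{k}}d\nu)=(X_{k}f_{k})\,d\nu+f_{k}(\operatorname{div}X_{k})\,d\nu$ is exactly their equation \eqref{div1}. What your route buys is economy and transparency: the only group-theoretic input is the triangular form \eqref{Xk0} of the generators, and the classical divergence theorem does the rest. What the paper's route buys is the explicit co-frame $\{\theta_{l,m}\}$ and the differentiation formula \eqref{df}, which are reused downstream (e.g.\ in the wedge computation $df_{s}\wedge\langle X_{s},d\nu\rangle=X_{s}f_{s}\,d\nu$ and in later sections), and which your argument only recovers through the sign-tracking expansion of $\imath_{X_{k}}d\nu$ that you correctly flag as the one genuinely computational point. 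Neither proof is fully scrupulous about the orientation signs $(-1)^{k-1}$ implicit in \eqref{Xjdnu0}, nor about the fact that $\int_{\Omega}X_{k}f_{k}\,d\nu$ is only conditionally defined for $f_{k}\in C^{1}(\Omega)\cap C(\overline{\Omega})$; your exhaustion argument for the latter is, if anything, more explicit than what the paper offers.
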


\begin{proof}[Proof of Proposition \ref{stokes}]
Recall
(see e.g. \cite[Section 3.1.5]{FR})
that the Jacobian basis $\{X_{1},...,X_{N_{1}}\}$ of $\mathbb{G}$ takes the form
\begin{equation}\label{Xk}
X_{k}=\frac{\partial}{\partial x_{k}^{(1)}}+
\sum_{l=2}^{r}\sum_{m=1}^{N_{l}}a_{k,m}^{(l)}(x^{(1)},...,x^{(l-1)})
\frac{\partial}{\partial x_{m}^{(l)}},
\end{equation}
where $a_{k,m}^{(l)}$ is a $\delta_{\lambda}$-homogeneous polynomial function of degree $l-1.$
As in Definition \ref{maindef}, $r$ is the step of $\mathbb{G}$ and
$x^{(l)}\in \mathbb{R}^{N_{l}}$, $l=1,\ldots,r$.
For any function $f$ we obtain the following differentiation formula

$$df=\sum_{k=1}^{N_{1}}\frac{\partial f}{\partial x_{k}^{(1)}}dx_{k}^{(1)}+\sum_{l=2}^{r}\sum_{m=1}^{N_{l}}
\frac{\partial f}{\partial x_{m}^{(l)}} dx_{m}^{(l)}
$$
$$
=\sum_{k=1}^{N_{1}} X_{k}f dx_{k}^{(1)}-\sum_{k=1}^{N_{1}}\sum_{l=2}^{r}
\sum_{m=1}^{N_{l}}a_{k,m}^{(l)}(x^{(1)},\ldots,x^{(l-1)})
\frac{\partial f}{\partial x_{m}^{(l)}} dx_{k}^{(l)}$$
$$+
\sum_{l=2}^{r}\sum_{m=1}^{N_{l}}
\frac{\partial f}{\partial x_{m}^{(l)}} dx_{m}^{(l)
}=\sum_{k=1}^{N_{1}} X_{k}f dx_{k}^{(1)}$$
$$+\sum_{l=2}^{r}\sum_{m=1}^{N_{l}}\frac{\partial f}
{\partial x_{m}^{(l)}} (-\sum_{k=1}^{N_{1}}a_{k,m}^{(l)}(x^{(1)},\ldots,x^{(l-1)})
 dx_{k}^{(1)}+dx_{m}^{(l)})$$
$$
=\sum_{k=1}^{N_{1}} X_{k}f dx_{k}^{(1)}+\sum_{l=2}^{r}\sum_{m=1}^{N_{l}}\frac{\partial f}
{\partial x_{m}^{(l)}} \theta_{l,m},
$$
where
\begin{equation}\label{theta}
\theta_{l,m}=-\sum_{k=1}^{N_{1}}a_{k,m}^{(l)}(x^{(1)},\ldots,x^{(l-1)})
 dx_{k}^{(1)}+dx_{m}^{(l)},\,\,l=2,\ldots,r,\,\,m=1,\ldots,N_{l}.
\end{equation}
That is
\begin{equation}\label{df}
df=\sum_{k=1}^{N_{1}} X_{k}f dx_{k}^{(1)}+\sum_{l=2}^{r}\sum_{m=1}^{N_{l}}\frac{\partial f}
{\partial x_{m}^{(l)}} \theta_{l,m}.
\end{equation}
It is simple to see that
 $$\langle X_{s}, dx_{j}^{(1)}\rangle=
 \frac{\partial}{\partial x_{s}^{(1)}}dx_{j}^{(1)}=\delta_{sj},$$
where $\delta_{sj}$ is the Kronecker delta, and
$$\langle X_{s}, \theta_{l,m} \rangle=
\left(\frac{\partial}{\partial x_{s}^{(1)}}+
\sum_{h=2}^{r}\sum_{g=1}^{N_{h}}a_{s,g}^{(h)}(x^{(1)},...,x^{(h-1)})
\frac{\partial}{\partial x_{g}^{(h)}}\right)
$$
$$
\cdot\left(
-\sum_{k=1}^{N_{1}}a_{k,m}^{(l)}(x^{(1)},\ldots,x^{(l-1)})
 dx_{k}^{(1)}+dx_{m}^{(l)}
\right)
$$
$$=-\sum_{k=1}^{N_{1}}\left(\frac{\partial}{\partial x_{s}^{(1)}}a_{k,m}^{(l)}(x^{(1)},\ldots,x^{(l-1)})\right)
 dx_{k}^{(1)}$$
$$-\sum_{k=1}^{N_{1}}a_{k,m}^{(l)}(x^{(1)},\ldots,x^{(l-1)})
\frac{\partial}{\partial x_{s}^{(1)}}
 dx_{k}^{(1)}
+\frac{\partial}{\partial x_{s}^{(1)}}dx_{m}^{(l)}
$$
 $$
 -\sum_{k=1}^{N_{1}}\sum_{h=2}^{r}\sum_{g=1}^{N_{h}}a_{s,g}^{(h)}
 (x^{(1)},...,x^{(h-1)})\left(\frac{\partial}{\partial x_{g}^{(h)}}a_{k,m}^{(l)}(x^{(1)},\ldots,x^{(l-1)})\right)dx_{k}^{(1)}
 $$
 $$
 -\sum_{k=1}^{N_{1}}\sum_{h=2}^{r}\sum_{g=1}^{N_{h}}a_{s,g}^{(h)}
 (x^{(1)},...,x^{(h-1)})a_{k,m}^{(l)}(x^{(1)},\ldots,x^{(l-1)})\frac{\partial}{\partial x_{g}^{(h)}}dx_{k}^{(1)}
 $$
 $$+
\sum_{h=2}^{r}\sum_{g=1}^{N_{h}}a_{s,g}^{(h)}(x^{(1)},...,x^{(h-1)})
\frac{\partial}{\partial x_{g}^{(h)}}dx_{m}^{(l)}$$

$$=-\sum_{k=1}^{N_{1}}\left(\frac{\partial}{\partial x_{s}^{(1)}}a_{k,m}^{(l)}(x^{(1)},\ldots,x^{(l-1)})\right)
 dx_{k}^{(1)}
-\sum_{k=1}^{N_{1}}a_{k,m}^{(l)}(x^{(1)},\ldots,x^{(l-1)})\delta_{sk}$$
 $$
 -\sum_{k=1}^{N_{1}}\sum_{h=2}^{r}\sum_{g=1}^{N_{h}}a_{s,g}^{(h)}
 (x^{(1)},...,x^{(h-1)})\left(\frac{\partial}{\partial x_{g}^{(h)}}a_{k,m}^{(l)}(x^{(1)},\ldots,x^{(l-1)})\right)dx_{k}^{(1)}
 $$
$$+
\sum_{h=2}^{r}\sum_{g=1}^{N_{h}}a_{s,g}^{(h)}(x^{(1)},...,x^{(h-1)})
\delta_{gm}\delta_{hl}=$$
 $$
 -\sum_{k=1}^{N_{1}}\sum_{h=2}^{r}\sum_{g=1}^{N_{h}}a_{s,g}^{(h)}
 (x^{(1)},...,x^{(h-1)})\left(\frac{\partial}{\partial x_{g}^{(h)}}a_{k,m}^{(l)}(x^{(1)},\ldots,x^{(l-1)})\right)dx_{k}^{(1)}
 $$
$$
-\sum_{k=1}^{N_{1}}\left(\frac{\partial}{\partial x_{s}^{(1)}}a_{k,m}^{(l)}(x^{(1)},\ldots,x^{(l-1)})\right)
 dx_{k}^{(1)}=$$
$$
-\sum_{k=1}^{N_{1}} \bigg[\sum_{h=2}^{r}\sum_{g=1}^{N_{l}}a_{s,g}^{(h)}
 (x^{(1)},...,x^{(h-1)})\left(\frac{\partial}{\partial x_{g}^{(h)}}a_{k,m}^{(l)}(x^{(1)},\ldots,x^{(l-1)})\right)
 $$
 $$
 +\frac{\partial}{\partial x_{s}^{(1)}}a_{k,m}^{(l)}(x^{(1)},\ldots,x^{(l-1)})\bigg]
 dx_{k}^{(1)}.
$$
That is, we have
 $$\langle X_{s}, dx_{j}^{(1)}\rangle=\delta_{sj},$$
for $s,j=1,\ldots,N_{1},$ and
$$\langle X_{s}, \theta_{l,m} \rangle=
\sum_{k=1}^{N_{1}}\mathcal{C}_{k} dx_{k}^{(1)},
$$
for $s=1,\ldots,N_{1},\,\,l=2,\ldots,r,\,\,m=1,\ldots,N_{l}$.
Here $$\mathcal{C}_{k}=-\sum_{h=2}^{r}\sum_{g=1}^{N_{l}}a_{s,g}^{(h)}
 (x^{(1)},...,x^{(h-1)})\frac{\partial}{\partial x_{g}^{(h)}}a_{k,m}^{(l)}(x^{(1)},\ldots,x^{(l-1)})
 $$
 $$-\frac{\partial}{\partial x_{s}^{(1)}}a_{k,m}^{(l)}(x^{(1)},\ldots,x^{(l-1)}).$$
We have
$$d\nu:=d\nu(x)=\bigwedge_{j=1}^{N}dx_{j}
=
\bigwedge_{j=1}^{N_{1}}dx_{j}^{(1)}\bigwedge_{l=2}^{r}\bigwedge_{m=1}^{N_{l}}dx_{m}^{(l)}
=\bigwedge_{j=1}^{N_{1}}dx_{j}^{(1)}\bigwedge_{l=2}^{r}\bigwedge_{m=1}^{N_{l}}\theta_{l,m},$$
so
\begin{equation}\label{Xjdnu}
\langle X_{k}, d\nu(x)\rangle=\bigwedge_{j=1,j\neq k}^{N_{1}}dx_{j}^{(1)}\bigwedge_{l=2}^{r}
\bigwedge_{m=1}^{N_{l}}\theta_{l,m}.
\end{equation}
Therefore, by using Formula \eqref{df} we get
$$d(f_{s}\langle X_{s}, d\nu(x)\rangle)=df_{s}\wedge\langle X_{s}, d\nu(x)\rangle
$$$$=\sum_{k=1}^{N_{1}} X_{k}f_{s} dx_{k}^{(1)}\wedge\langle X_{s}, d\nu(x)\rangle$$$$+\sum_{l=2}^{r}\sum_{m=1}^{N_{l}}\frac{\partial f_{s}}
{\partial x_{m}^{(l)}} \theta_{l,m}\wedge\langle X_{s}, d\nu(x)\rangle=X_{s}f_{s}d\nu(x),$$
that is,
\begin{equation}\label{stokesone}
d(\langle f_{k}X_{k}, d\nu(x)\rangle)=X_{k}f_{k}d\nu(x),\quad k=1,\ldots,N_{1}.
\end{equation}
Now using the Stokes theorem (see e.g. \cite[Theorem 26.3.1]{DFN}) we obtain \eqref{EQ:S1}.
Taking a sum over $k$ we also obtain \eqref{EQ:S2}.
\end{proof}

We have the following analogue of Green's first formula. This version was proved for the ball in \cite{Gav77} and for any smooth domain of the complex Heisenberg group in \cite{R91}. See also \cite{BLU07} and \cite{CGH08} for other analogues.
\begin{prop}[Green's first formula]\label{green1}
Let $\Omega\subset\mathbb{G}$ be an admissible domain. Let $v\in C^{1}(\Omega)\bigcap C(\overline{\Omega})$ and $u\in C^{2}(\Omega)\bigcap C^{1}(\overline{\Omega})$. Then
\begin{equation} \label{g1}
\int_{\Omega}\left((\mathcal{\widetilde{\nabla}}v) u+v\mathcal{L}u\right) d\nu=\int_{\partial\Omega}v\langle \mathcal{\widetilde{\nabla} }u,d\nu\rangle,
\end{equation}
where $\mathcal{L}$ is the sub-Laplacian on $\mathbb{G}$ and
$$\mathcal{\widetilde{\nabla} }u=\sum_{k=1}^{N_{1}}\left(X_{k}u\right)X_{k}.$$
\end{prop}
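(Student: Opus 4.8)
The plan is to deduce Green's first formula directly from the divergence formula (Proposition~\ref{stokes}) applied to a judicious choice of vector-field components $f_{k}$. Concretely, I would set
$$
f_{k}:=v\,(X_{k}u),\qquad k=1,\ldots,N_{1}.
$$
The first step is to check that this choice is admissible for Proposition~\ref{stokes}. Since $u\in C^{2}(\Omega)\cap C^{1}(\overline{\Omega})$ and each $X_{k}$ is a first-order differential operator with smooth (polynomial) coefficients by \eqref{Xk}, we have $X_{k}u\in C^{1}(\Omega)\cap C(\overline{\Omega})$; multiplying by $v\in C^{1}(\Omega)\cap C(\overline{\Omega})$ keeps us in $C^{1}(\Omega)\cap C(\overline{\Omega})$. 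Hence $f_{1},\ldots,f_{N_{1}}$ satisfy the hypotheses of Proposition~\ref{stokes}.

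The second step is the interior computation. Using that each $X_{k}$ is a derivation (Leibniz rule), I would compute pointwise in $\Omega$
$$
\sum_{k=1}^{N_{1}}X_{k}f_{k}=\sum_{k=1}^{N_{1}}X_{k}\big(v\,(X_{k}u)\big)=\sum_{k=1}^{N_{1}}(X_{k}v)(X_{k}u)+v\sum_{k=1}^{N_{1}}X_{k}^{2}u=(\widetilde{\nabla}v)u+v\,\mathcal{L}u,
$$
recalling $\widetilde{\nabla}v=\sum_{k}(X_{k}v)X_{k}$, so that the derivation $\widetilde{\nabla}v$ acts on $u$ by $(\widetilde{\nabla}v)u=\sum_{k}(X_{k}v)(X_{k}u)$, and $\mathcal{L}=\sum_{k}X_{k}^{2}$. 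This identifies the left-hand side of \eqref{g1} with $\int_{\Omega}\sum_{k}X_{k}f_{k}\,d\nu$.

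The third step is the boundary term: using linearity of the natural pairing $\langle\,\cdot\,,d\nu\rangle$ under multiplication of the vector field by a function, one has
$$
\sum_{k=1}^{N_{1}}f_{k}\,\langle X_{k},d\nu\rangle=\sum_{k=1}^{N_{1}}v\,(X_{k}u)\,\langle X_{k},d\nu\rangle=v\,\Big\langle\sum_{k=1}^{N_{1}}(X_{k}u)X_{k},\,d\nu\Big\rangle=v\,\langle\widetilde{\nabla}u,d\nu\rangle,
$$
which is precisely the integrand on the right-hand side of \eqref{g1}. Combining the last two displays with \eqref{EQ:S2} of Proposition~\ref{stokes} yields the claim. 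I do not expect a serious obstacle here: the substantive content has already been absorbed into the divergence formula, and the only points needing care are the bookkeeping of the regularity classes (so that Proposition~\ref{stokes} genuinely applies up to $\partial\Omega$) and the elementary observation that $\langle X_{k},d\nu\rangle$ scales linearly under multiplication by $f_{k}$, which lets the finite sum be pulled inside the bracket.
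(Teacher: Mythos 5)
Your proposal is correct and follows essentially the same route as the paper: the paper also sets $f_{k}=v\,X_{k}u$, applies the Leibniz rule to get $\sum_{k}X_{k}f_{k}=(\widetilde{\nabla}v)u+v\mathcal{L}u$, and concludes by the divergence formula of Proposition~\ref{stokes}, pulling the sum inside the pairing for the boundary term exactly as you do. Your additional check that $f_{k}\in C^{1}(\Omega)\cap C(\overline{\Omega})$ is a sensible bit of bookkeeping that the paper leaves implicit.
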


\begin{proof}[Proof of Proposition \ref{green1}]
Let $f_{k}=vX_{k}u,$ then

$$\sum_{k=1}^{N_{1}}X_{k}f_{k}=(\mathcal{\widetilde{\nabla} }v) u+v\mathcal{L}u.$$
Here as usual we understand the scalar expression for
$(\mathcal{\widetilde{\nabla} }v) u$ as
$$\left(\mathcal{\widetilde{\nabla} }v\right) u=\mathcal{\widetilde{\nabla} }v u= \sum_{k=1}^{N_{1}}\left(X_{k}v\right)\left(X_{k}u\right)=\sum_{k=1}^{N_{1}}
X_{k}v X_{k}u.$$
Otherwise, we may sometimes use the expression $\mathcal{\widetilde{\nabla} }(v u),$ of course, this is an operator.
By using Divergence formula in Proposition \ref{stokes} we obtain
\begin{multline*}
\int_{\Omega}\left(\mathcal{\widetilde{\nabla} }v u+v\mathcal{L}u\right) d\nu$$$$=\int_{\Omega}
\sum_{k=1}^{N_{1}}X_{k}f_{k}d\nu \\
=\int_{\partial\Omega}\sum_{k=1}^{N_{1}}\langle f_{k}X_{k},d\nu\rangle
=\int_{\partial\Omega}\sum_{k=1}^{N_{1}}\langle vX_{k}uX_{k},d\nu\rangle
=\int_{\partial\Omega}v\langle \mathcal{\widetilde{\nabla} }u,d\nu\rangle,
\end{multline*}
completing the proof.
\end{proof}
When $v=1$ Proposition \ref{green1} implies the following analogue of Gauss' mean value formula for
harmonic functions:
\begin{cor}\label{COR:v0}
If $\mathcal{L}u=0$ in an admissible domain $\Omega\subset\mathbb{G}$, then
$$\int_{\partial\Omega}\langle \mathcal{\widetilde{\nabla} }u,d\nu\rangle=0.$$
\end{cor}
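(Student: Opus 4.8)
The plan is simply to specialize Green's first formula (Proposition~\ref{green1}) to the constant function $v \equiv 1$. First I would note that $v\equiv 1$ belongs to $C^{1}(\Omega)\bigcap C(\overline{\Omega})$, so the hypotheses of Proposition~\ref{green1} are met (the regularity $u\in C^{2}(\Omega)\bigcap C^{1}(\overline{\Omega})$ being understood to hold here, as in that proposition; in fact, by hypoellipticity of $\mathcal{L}$ the condition $\mathcal{L}u=0$ already forces $u\in C^{\infty}(\Omega)$, so only $C^{1}$ regularity up to $\partial\Omega$ need be assumed separately). Since each left-invariant vector field $X_{k}$ is a derivation and therefore annihilates constants, $X_{k}v=0$ for all $k=1,\dots,N_{1}$, whence $\widetilde{\nabla}v=\sum_{k=1}^{N_{1}}(X_{k}v)X_{k}\equiv 0$, so the term $(\widetilde{\nabla}v)u$ in the integrand on the left-hand side of \eqref{g1} vanishes identically.

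Substituting $v\equiv 1$ into \eqref{g1}, the left-hand side reduces to $\int_{\Omega}\mathcal{L}u\,d\nu$ and the right-hand side to $\int_{\partial\Omega}\langle\widetilde{\nabla}u,d\nu\rangle$. Invoking the hypothesis $\mathcal{L}u=0$ in $\Omega$ makes the left-hand side equal to zero, and I would conclude that $\int_{\partial\Omega}\langle\widetilde{\nabla}u,d\nu\rangle=0$, which is the asserted identity.

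I do not expect any real obstacle: the entire content of the corollary is the $v\equiv 1$ case of the already-established Green's first formula, in exact parallel with the classical statement that the flux of the (sub-)gradient of an $\mathcal{L}$-harmonic function through the boundary of the domain vanishes. The only point worth a line of care is the regularity bookkeeping noted above, ensuring that the hypotheses of Proposition~\ref{green1} are genuinely available.
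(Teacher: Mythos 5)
Your proposal is correct and is exactly the paper's argument: the corollary is stated as the $v\equiv 1$ specialization of Green's first formula (Proposition~\ref{green1}), with $\widetilde{\nabla}v=0$ killing the first term and $\mathcal{L}u=0$ killing the second, leaving the boundary integral equal to zero. The regularity remark is a reasonable extra line of care but does not change the argument.
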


As in the classical potential theory, the Green formulae are still valid for functions with (weak) singularities provided we can approximate them by smooth functions. In this sense, without further justification we can apply these Green's formulae, in particular, to the fundamental solution $\varepsilon$.

Then, for $x\in\Omega$, taking $v=1$ and $u(y)=\varepsilon(x,y)$ we record
the following consequence of Proposition \ref{green1}:
\begin{cor}\label{COR:v1}
If $\Omega\subset\mathbb{G}$ is an admissible domain, and $x\in\Omega$, then
$$\int_{\partial\Omega}\langle \mathcal{\widetilde{\nabla} }\varepsilon(x,y),d\nu(y)\rangle=1,$$
where $\varepsilon$ is the fundamental solution of the sub-Laplacian.
\end{cor}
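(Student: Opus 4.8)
The plan is to read the identity off from the fact that $\varepsilon(x,\cdot)$ is the fundamental solution of $\mathcal{L}$ with pole at $x$, i.e.\ $\mathcal{L}\varepsilon(x,\cdot)=\delta_{x}$, so that the left-hand side of Green's first formula (Proposition~\ref{green1}) with $v\equiv1$ and $u(y)=\varepsilon(x,y)$ ``morally'' equals $\int_{\Omega}\mathcal{L}\varepsilon(x,y)\,d\nu(y)=1$ because $x$ is interior to $\Omega$. The only real work is to handle the singularity at $y=x$; as the remark just before the statement indicates, this can be done by mollifying $\varepsilon$, but I would prefer the self-contained excision argument below, which also produces the exact normalising constant.

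First I would fix $\rho>0$ small enough that the gauge ball $B_{\rho}(x)=\{y:\ d(x,y)<\rho\}$ satisfies $\overline{B_{\rho}(x)}\subset\Omega$. Since $d$ (equivalently $\varepsilon$) is smooth away from the origin, the level surface $\partial B_{\rho}(x)$ is smooth and orientable, and $\Omega_{\rho}:=\Omega\setminus\overline{B_{\rho}(x)}$ is again an admissible domain whose oriented boundary is $\partial\Omega-\partial B_{\rho}(x)$. On $\Omega_{\rho}$ the function $y\mapsto\varepsilon(x,y)$ is smooth and $\mathcal{L}$-harmonic, so Corollary~\ref{COR:v0} applies there and yields
$$
\int_{\partial\Omega}\langle\widetilde{\nabla}\varepsilon(x,y),d\nu(y)\rangle=\int_{\partial B_{\rho}(x)}\langle\widetilde{\nabla}\varepsilon(x,y),d\nu(y)\rangle=:I .
$$
Thus the problem is reduced to computing the flux $\int_{\partial B_{\rho}(x)}\langle\widetilde{\nabla}\varepsilon(x,y),d\nu(y)\rangle$ through a small gauge sphere.

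To pin down $I$ I would pick a cutoff $\phi\in C_{c}^{\infty}(\Omega)$ with $\phi\equiv1$ on $B_{\rho_{0}}(x)$ for some $\rho_{0}>0$, and apply the divergence formula (Proposition~\ref{stokes}) on $\Omega_{\rho}$, for $\rho<\rho_{0}$, to the two choices $f_{k}=\phi\,X_{k}\varepsilon(x,\cdot)$ and $f_{k}=\varepsilon(x,\cdot)\,X_{k}\phi$. In both cases $\phi$ and $\widetilde{\nabla}\phi$ vanish near $\partial\Omega$, $\widetilde{\nabla}\phi$ vanishes on $B_{\rho}(x)$, and $\mathcal{L}\varepsilon(x,\cdot)=0$ on $\Omega_{\rho}$. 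The first choice then collapses to $P=-I$, where $P:=\int_{\Omega\setminus\overline{B_{\rho_{0}}(x)}}(\widetilde{\nabla}\phi)\,\varepsilon(x,\cdot)\,d\nu$ (which in particular shows $I$ does not depend on $\rho$); the second collapses to $P+\int_{\Omega_{\rho}}\varepsilon(x,y)\,\mathcal{L}\phi(y)\,d\nu(y)=0$, because the surface term over $\partial B_{\rho}(x)$ carries the factor $\widetilde{\nabla}\phi\equiv0$. Letting $\rho\to0$, using $\varepsilon(x,\cdot)\in L^{1}_{\mathrm{loc}}(\mathbb{G})$ and the defining property $\int_{\mathbb{G}}\varepsilon(x,y)\mathcal{L}\phi(y)\,d\nu(y)=\phi(x)=1$ of the fundamental solution, the second identity becomes $P+1=0$, whence $I=-P=1$, which is the claim.

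I expect the main obstacle to be conceptual rather than computational: one must make sure the excised domain $\Omega_{\rho}$ is admissible (this uses smoothness of the $\mathcal{L}$-gauge) and that every surface integral over the auxiliary sphere $\partial B_{\rho}(x)$ is either independent of $\rho$ (the flux of $\widetilde{\nabla}\varepsilon$) or carries the cutoff factor $\widetilde{\nabla}\phi$ that kills it; and, crucially, the value $1$ on the right must enter through the genuine input that $\varepsilon(x,\cdot)$ is the fundamental solution with pole at $x$, i.e.\ that $\int_{\mathbb{G}}\varepsilon(x,y)\mathcal{L}\phi(y)\,d\nu(y)=\phi(x)$. Everything else is routine manipulation of Proposition~\ref{stokes}.
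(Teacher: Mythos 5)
Your argument is correct, and at its core it runs on the same engine as the paper's: the identity is Green's first formula (Proposition \ref{green1}) with $v\equiv 1$ and $u(y)=\varepsilon(x,y)$, the right-hand side $1$ coming from $\mathcal{L}\varepsilon(x,\cdot)=\delta_{x}$ with $x$ interior. The difference is one of rigour rather than of route: the paper simply declares, just before the corollary, that ``the Green formulae are still valid for functions with (weak) singularities provided we can approximate them by smooth functions'' and then records the statement ``without further justification,'' whereas you actually carry out the regularisation. Your excision of the gauge ball $B_{\rho}(x)$ together with Corollary \ref{COR:v0} on $\Omega\setminus\overline{B_{\rho}(x)}$ reduces the claim to a $\rho$-independent flux through a small gauge sphere, and the cutoff computation correctly extracts the value $1$ from the distributional identity $\int_{\mathbb{G}}\varepsilon(x,y)\mathcal{L}\phi(y)\,d\nu(y)=\phi(x)$ (which uses the formal self-adjointness of $\mathcal{L}$ and the local integrability of $d^{2-Q}$, both available here since $Q\geq 3$). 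The only points worth flagging, which you already note, are minor: one should take $\rho$ to be a regular value of $d(x,\cdot)$ so that $\partial B_{\rho}(x)$ is a smooth hypersurface and $\Omega\setminus\overline{B_{\rho}(x)}$ is admissible (almost every $\rho$ works, and the $\rho$-independence of the flux makes this harmless), and one must keep the orientation of the inner boundary straight. So your proof is a legitimate, self-contained substitute for the approximation argument the paper leaves implicit.
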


Now we can prove the following uniqueness result by a classical potential theory method. We should mention that
the following lemma is known and can be proved by other methods too, but given
Green's first formula in Proposition \ref{green1} its proof becomes elementary.

In addition to the more well-known Dirichlet boundary conditions we give examples of boundary conditions of different types, such as Neumann, Robin, mixed Dirichlet and Robin, or different types of conditions on different parts of the boundary. For brevity, we restrict the consideration below to zero boundary conditions only, otherwise these problems may become very delicate due to the presence of characteristic points, see e.g. \cite{DGN}. We hope to address these issues with our methods in a subsequent paper; overall, this is a very active area of research, see e.g.
\cite{BG} and references therein also for other types of equations.

We also note that in the subsequently considered boundary value problems, we can assume
without loss of generality (in the proofs) that functions are real valued since otherwise we can always take real and imaginary parts which would then satisfy the same equations.
As usual, throughout this paper $\Omega$ is an admissible domain, see Definition \ref{DEF:adomain}.

\begin{lem}\label{uniqueness}
The Dirichlet boundary value problem
\begin{equation}\label{Lu0}
\mathcal{L}u(x)=0,\,\, x\in\Omega\subset\mathbb{G},
\end{equation}
\begin{equation}\label{u0}
u(x)=0,\,\, x\in\partial\Omega,
\end{equation}
has the unique trivial solution
$u\equiv0$ in the class of functions $C^{2}(\Omega)\bigcap C^{1}(\overline{\Omega})$.
\end{lem}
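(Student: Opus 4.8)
The plan is to apply Green's first formula (Proposition \ref{green1}) with the choice $v=u$. Since $u\in C^{2}(\Omega)\bigcap C^{1}(\overline{\Omega})$, it in particular lies in $C^{1}(\Omega)\bigcap C(\overline{\Omega})$, so the hypotheses of Proposition \ref{green1} are satisfied with this pair $(v,u)$. Substituting, I obtain
$$\int_{\Omega}\left((\widetilde{\nabla}u)\,u+u\,\mathcal{L}u\right)d\nu=\int_{\partial\Omega}u\,\langle\widetilde{\nabla}u,d\nu\rangle.$$
Now I would use both parts of the boundary value problem: by \eqref{Lu0} the term $u\,\mathcal{L}u$ vanishes identically on $\Omega$, and by \eqref{u0} the boundary integrand vanishes on $\partial\Omega$. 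Recalling the scalar interpretation $(\widetilde{\nabla}u)u=\sum_{k=1}^{N_{1}}(X_{k}u)^{2}$ recorded right after the statement of Proposition \ref{green1}, this leaves
$$\int_{\Omega}\sum_{k=1}^{N_{1}}(X_{k}u)^{2}\,d\nu=0.$$

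Because $u\in C^{1}(\Omega)$, each $X_{k}u$ is continuous on $\Omega$, so the integrand above is a nonnegative continuous function with vanishing integral; hence $X_{k}u\equiv 0$ on $\Omega$ for every $k=1,\dots,N_{1}$. Next I would upgrade this to $du=0$ using the bracket-generating (Hörmander) condition, part (b) of Definition \ref{maindef}. First note that since $\mathcal{L}u=0$ and $\mathcal{L}$ is hypoelliptic, $u\in C^{\infty}(\Omega)$, so all iterated derivatives of $u$ are legitimate. If a (left-invariant) vector field $Y$ satisfies $Yu=0$ and likewise $Zu=0$, then $ZYu=Z(0)=0$ and $[Z,Y]u=0$; iterating, every iterated commutator of $X_{1},\dots,X_{N_{1}}$ annihilates $u$. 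Since these commutators span the tangent space at every point, it follows that $du=0$ on $\Omega$, so $u$ is locally constant, i.e. constant on each connected component of $\Omega$.

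Finally, to conclude $u\equiv 0$ I would invoke the boundedness of $\Omega$ together with the boundary condition. Each connected component $\Omega_{0}$ of $\Omega$ is a nonempty bounded open set, hence has nonempty boundary with $\partial\Omega_{0}\subset\partial\Omega$; since $u\in C(\overline{\Omega})$ is constant on $\Omega_{0}$ and equals $0$ on $\partial\Omega_{0}$ by \eqref{u0}, that constant must be $0$. Therefore $u\equiv 0$ on $\Omega$, which is the claim.

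The only step that requires genuine care, rather than being a formality, is the passage from "$X_{k}u=0$ for all $k$" to "$du=0$": this is where the sub-Riemannian structure actually enters, and one must both justify the needed smoothness of $u$ (via hypoellipticity of $\mathcal{L}$) and check that annihilation really propagates through all iterated brackets spanning the tangent space. Everything else — the applicability of Proposition \ref{green1} with $v=u$, the vanishing-integral argument, and the componentwise use of the boundary condition — is routine. (An alternative to the commutator argument, if one wished to avoid the smoothness discussion, would be to appeal to a strong maximum principle or unique continuation property for $\mathcal{L}$, but the bracket argument is the most self-contained given only what precedes in the text.)
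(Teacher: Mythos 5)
Your proof is correct and follows essentially the same route as the paper: apply Green's first formula with $v=u$, use \eqref{Lu0} and \eqref{u0} to conclude $\int_{\Omega}\sum_{k}|X_{k}u|^{2}\,d\nu=0$, hence $X_{k}u\equiv 0$, and then pass to $u$ constant via the bracket-generating condition and conclude $u\equiv 0$ from the boundary data. The paper states the bracket step and the final conclusion in one line without the smoothness justification or the component-by-component argument, so your version simply supplies details the paper leaves implicit.
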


\begin{proof}[Proof of Lemma \ref{uniqueness}.]
Set $v=u$ in \eqref{g1}, then by \eqref{Lu0} and \eqref{u0} we get
$$\int_{\Omega}\mathcal{\widetilde{\nabla} }u u d\nu=\int_{\Omega}\left(\mathcal{\widetilde{\nabla} }u u+u\mathcal{L}u\right)d\nu
=\int_{\partial\Omega}u\langle \mathcal{\widetilde{\nabla} }u,d\nu\rangle=0.$$
Therefore
$$\int_{\Omega}\sum_{k=1}^{N_{1}}|X_{k}u|^{2}d\nu=0,$$
that is,
$X_{k}u=0,\,\, k=1,...,N_{1}.$ Since any element of a Jacobian basis of $\mathbb{G}$ is represented by Lie brackets of $\{X_{1},...,X_{N_{1}}\}$, we obtain that
$u$ is a constant, so $u\equiv0$ on $\Omega$ by \eqref{u0}.
\end{proof}

This has the following simple extension to (stationary) Schr\"odinger operators:

\begin{lem}\label{schr}
Let $q:\Omega\rightarrow \mathbb{R}$ be a non-negative bounded function that is,
$q\in L^{\infty}(\Omega)$ and $q(x)\geq 0,\, x\in \Omega$.
Then the Dirichlet boundary value problem for the Schr\"odinger equation
\begin{equation}\label{Su0}
-\mathcal{L}u(x)+q(x)u(x)=0,\,\, x\in\Omega\subset\mathbb{G},
\end{equation}
\begin{equation}\label{s0}
u(x)=0,\,\, x\in\partial\Omega,
\end{equation}
has the unique trivial solution
$u\equiv0$ in the class of functions $C^{2}(\Omega)\bigcap C^{1}(\overline{\Omega})$.
\end{lem}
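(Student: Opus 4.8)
The plan is to imitate the proof of Lemma \ref{uniqueness} verbatim, the only new ingredient being the sign hypothesis on the potential $q$. First I would set $v=u$ in Green's first formula \eqref{g1} --- which is legitimate since $u\in C^{2}(\Omega)\bigcap C^{1}(\overline{\Omega})$ --- to obtain
$$\int_{\Omega}\left((\widetilde{\nabla}u)u+u\mathcal{L}u\right)d\nu=\int_{\partial\Omega}u\langle\widetilde{\nabla}u,d\nu\rangle.$$
By the boundary condition \eqref{s0} the right-hand side vanishes. Next I would substitute the equation \eqref{Su0} in the form $\mathcal{L}u=qu$ and use, exactly as in Proposition \ref{green1}, the identity $(\widetilde{\nabla}u)u=\sum_{k=1}^{N_{1}}|X_{k}u|^{2}$, which turns the above identity into
$$\int_{\Omega}\sum_{k=1}^{N_{1}}|X_{k}u|^{2}\,d\nu+\int_{\Omega}q\,|u|^{2}\,d\nu=0.$$

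The key observation is that both integrands are non-negative: $\sum_{k}|X_{k}u|^{2}\geq 0$ trivially, and $q\,|u|^{2}\geq 0$ precisely because $q\geq 0$ by hypothesis (and $q\in L^{\infty}(\Omega)$ together with boundedness of $u$ on $\overline{\Omega}$ guarantees finiteness of the second integral). Hence each of the two integrals vanishes separately; in particular $\int_{\Omega}\sum_{k}|X_{k}u|^{2}\,d\nu=0$, so $X_{k}u=0$ in $\Omega$ for every $k=1,\dots,N_{1}$. Since every element of a Jacobian basis of $\mathbb{G}$ is a Lie bracket of $X_{1},\dots,X_{N_{1}}$ (Definition \ref{maindef}(b)), all derivatives of $u$ along this basis vanish, so $u$ is constant; combined with \eqref{s0} this forces $u\equiv 0$ in $\Omega$.

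There is essentially no obstacle beyond bookkeeping. It is worth noting that the gradient term alone already annihilates $u$, so the only role played by the sign condition $q\geq 0$ is to prevent a cancellation between the two terms on the left-hand side; if one wanted to allow sign-changing $q$ one would instead have to impose a spectral assumption ruling out a zero eigenvalue of $-\mathcal{L}+q$ on $\Omega$ under the Dirichlet condition. Under the stated hypotheses, however, the argument above is complete and self-contained.
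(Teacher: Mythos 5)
Your proof is correct and follows essentially the same route as the paper: set $v=u$ in Green's first formula, kill the boundary term with the Dirichlet condition, substitute $\mathcal{L}u=qu$, and use non-negativity of both resulting integrands to conclude $X_{k}u=0$ for all $k$ and hence $u\equiv 0$. The only difference is cosmetic — you spell out the final step (constancy via the bracket-generating property plus the boundary condition) slightly more explicitly than the paper does.
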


\begin{proof}[Proof of Lemma \ref{schr}.]
As in proof of Lemma  \ref{uniqueness} using Green's formula, from \eqref{Su0} and \eqref{s0} we obtain
\begin{multline*}
\int_{\Omega}\mathcal{\widetilde{\nabla} }u u d\nu=\int_{\Omega}\left(\mathcal{\widetilde{\nabla} }u u
+u\mathcal{L}u\right)d\nu
-\int_{\Omega}q(y)|u(y)|^{2}d\nu \\
=\int_{\partial\Omega}u\langle \mathcal{\widetilde{\nabla} }u,d\nu\rangle
-\int_{\Omega}q(y)|u(y)|^{2}d\nu=
-\int_{\Omega}q(y)|u(y)|^{2}d\nu.
\end{multline*}
Therefore,
$$0\leq \int_{\Omega}\sum_{k=1}^{N_{1}}|X_{k}u|^{2}d\nu=-\int_{\Omega}q(y)|u(y)|^{2}d\nu\leq 0,$$
that is, $u\equiv0$.
\end{proof}

Similarly, we obtain the following statement for the von Neumann type boundary conditions.
We note that von Neumann type boundary value problem
for the sub-Laplacian have been known and studied, see e.g. \cite{DGN}. However, here we offer a new measure-type condition for the von Neumann type boundary value problem for the sub-Laplacian:

\begin{lem}\label{neumann}
The boundary value problem
\begin{equation}\label{Nu0}
\mathcal{L}u(x)=0,\,\, x\in\Omega\subset\mathbb{G},
\end{equation}
\begin{equation}\label{n0}
\sum_{j=1}^{N_{1}}X_{j}u\langle X_{j} ,d\nu\rangle=0\;\textrm{ on }\;\partial\Omega,
\end{equation}
has a solution
$u\equiv \text{const}$ in the class of functions $C^{2}(\Omega)\bigcap C^{1}(\overline{\Omega})$.
\end{lem}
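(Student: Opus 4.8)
The plan is to mimic the proof of Lemma \ref{uniqueness}, using Green's first formula in Proposition \ref{green1} with the choice $v=u$. Setting $v=u$ in \eqref{g1} gives
\[
\int_{\Omega}\left((\widetilde{\nabla}u)u+u\mathcal{L}u\right)d\nu=\int_{\partial\Omega}u\,\langle\widetilde{\nabla}u,d\nu\rangle .
\]
By \eqref{Nu0} the term $u\mathcal{L}u$ vanishes inside the domain. On the boundary, the form $\langle\widetilde{\nabla}u,d\nu\rangle=\sum_{j=1}^{N_{1}}(X_{j}u)\langle X_{j},d\nu\rangle$ vanishes by the Neumann-type condition \eqref{n0}, so the right-hand side is zero. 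Hence
\[
\int_{\Omega}\sum_{k=1}^{N_{1}}|X_{k}u|^{2}\,d\nu=\int_{\Omega}(\widetilde{\nabla}u)u\,d\nu=0 ,
\]
which forces $X_{k}u=0$ for every $k=1,\ldots,N_{1}$ throughout $\Omega$.

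The concluding step is the same Lie-algebraic observation used in Lemma \ref{uniqueness}: since by Definition \ref{maindef}(b) the iterated commutators of $X_{1},\ldots,X_{N_{1}}$ span the whole Lie algebra of $\mathbb{G}$, and all the $X_{k}u$ vanish, every vector field on $\mathbb{G}$ (being a combination of such commutators) annihilates $u$; consequently $u$ is constant on $\Omega$ (here one uses that $\Omega$ is connected, which we may assume, or argue component by component). Conversely, any constant function obviously solves \eqref{Nu0}--\eqref{n0}, so the constants are exactly the solution set; this is why the statement asserts existence of the solution $u\equiv\text{const}$ rather than uniqueness. One should also remark that $u\equiv\text{const}$ indeed lies in the class $C^{2}(\Omega)\cap C^{1}(\overline{\Omega})$, so the claim is non-vacuous.

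There is essentially no obstacle here: the only point requiring a word of care is the application of Green's first formula, which was proved for $v\in C^{1}(\Omega)\cap C(\overline{\Omega})$ and $u\in C^{2}(\Omega)\cap C^{1}(\overline{\Omega})$ — with $v=u$ and $u\in C^{2}(\Omega)\cap C^{1}(\overline{\Omega})$ both hypotheses are met, so the formula applies verbatim. The step passing from $X_{k}u=0$ to $u=\text{const}$ via the bracket-generating (Hörmander) condition is standard and identical to the one already invoked in the proof of Lemma \ref{uniqueness}, so it can be stated briefly. Thus the proof is a two-line computation followed by the same closing remark as before.
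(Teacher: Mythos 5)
Your proof is correct and follows essentially the same route as the paper's: set $v=u$ in Green's first formula \eqref{g1}, use \eqref{Nu0} and \eqref{n0} to kill the interior and boundary terms, deduce $\int_{\Omega}\sum_{k}|X_{k}u|^{2}\,d\nu=0$, and conclude that $u$ is constant from the bracket-generating condition. Your added remarks on connectedness and on constants actually satisfying \eqref{Nu0}--\eqref{n0} are sensible but not part of the paper's argument.
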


\begin{proof}[Proof of Lemma \ref{neumann}.]
Set $v=u$ in \eqref{g1}, then by \eqref{Nu0} and \eqref{n0} we get
$$\int_{\Omega}\mathcal{\widetilde{\nabla} }u u d\nu=\int_{\Omega}\left(\mathcal{\widetilde{\nabla} }u u+u\mathcal{L}u\right)d\nu
=\int_{\partial\Omega}u\langle \mathcal{\widetilde{\nabla} }u,d\nu\rangle=\int_{\partial\Omega}u\sum_{j=1}^{N_{1}}X_{j}u\langle X_{j} ,d\nu\rangle=0.$$
Therefore
$$\int_{\Omega}\sum_{k=1}^{N_{1}}|X_{k}u|^{2}d\nu=0,$$
that is,
$X_{k}u=0,\,\, k=1,...,N_{1}.$ Since any element of a Jacobian basis of $\mathbb{G}$ is represented by Lie brackets of $\{X_{1},...,X_{N_{1}}\}$, we obtain that
$u$ is a constant.
\end{proof}

Similarly, we can now also consider the Robin type boundary conditions as follows.
\begin{lem}\label{robin}
Let $a_{k}:\partial\Omega\rightarrow {\mathbb R},\,\,k=1,...,N_{1},$ be bounded functions
such that the measure
\begin{equation}\label{m1}
\sum_{j=1}^{N_1}a_j\langle X_{j} ,d\nu\rangle\geq 0
\end{equation}
is non-negative on $\partial\Omega$.
Then the boundary value problem
\begin{equation}\label{Ru0}
\mathcal{L}u(x)=0,\,\, x\in\Omega\subset\mathbb{G},
\end{equation}
\begin{equation}\label{r0}
\sum_{j=1}^{N_{1}}(a_{j}u+X_{j}u)\langle X_{j} ,d\nu\rangle=0\; \textrm{ on }\; \partial\Omega,\,\,
\end{equation}
has a solution
$u\equiv const$ in the class of functions $C^{2}(\Omega)\bigcap C^{1}(\overline{\Omega})$.

\smallskip
Moreover, if the integral of the measure \eqref{m1} is positive, i.e. if
\begin{equation}\label{m2}\int_{\partial\Omega} \sum_{j=1}^{N_1}a_j\langle X_{j} ,d\nu\rangle> 0,\end{equation}
 then
the boundary value problem \eqref{Ru0}-\eqref{r0}
has the unique trivial solution
$u\equiv 0$ in the class of functions $C^{2}(\Omega)\bigcap C^{1}(\overline{\Omega})$.
\end{lem}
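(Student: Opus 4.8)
The plan is to follow the same Green's first formula argument as in Lemmas \ref{uniqueness}, \ref{schr}, and \ref{neumann}, but now carefully tracking the boundary term produced by the Robin condition \eqref{r0}. First I would set $v=u$ in \eqref{g1}. Using $\mathcal{L}u=0$ from \eqref{Ru0}, the left-hand side reduces to $\int_{\Omega}\sum_{k=1}^{N_{1}}|X_{k}u|^{2}\,d\nu$, and the right-hand side is $\int_{\partial\Omega}u\langle\widetilde{\nabla}u,d\nu\rangle=\int_{\partial\Omega}u\sum_{j=1}^{N_{1}}(X_{j}u)\langle X_{j},d\nu\rangle$. Next I would use the boundary condition \eqref{r0} in the form $\sum_{j=1}^{N_{1}}(X_{j}u)\langle X_{j},d\nu\rangle=-\sum_{j=1}^{N_{1}}a_{j}u\langle X_{j},d\nu\rangle$ on $\partial\Omega$, so that the boundary integral becomes $-\int_{\partial\Omega}|u|^{2}\sum_{j=1}^{N_{1}}a_{j}\langle X_{j},d\nu\rangle$. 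Combining, one gets
\begin{equation*}
\int_{\Omega}\sum_{k=1}^{N_{1}}|X_{k}u|^{2}\,d\nu=-\int_{\partial\Omega}|u|^{2}\sum_{j=1}^{N_{1}}a_{j}\langle X_{j},d\nu\rangle.
\end{equation*}

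The left-hand side is $\geq 0$, while by the sign hypothesis \eqref{m1} the measure $\sum_{j}a_{j}\langle X_{j},d\nu\rangle$ is non-negative on $\partial\Omega$, so the right-hand side is $\leq 0$. Hence both sides vanish: $X_{k}u=0$ for all $k=1,\ldots,N_{1}$, and as in the previous lemmas, since the iterated commutators of $X_{1},\ldots,X_{N_{1}}$ span the whole Lie algebra by the H\"ormander/bracket-generating condition in Definition \ref{maindef}(b), $u$ is constant on $\Omega$ (and on $\overline{\Omega}$ by continuity). This proves the first assertion. For the second assertion, observe that equality in the above also forces $\int_{\partial\Omega}|u|^{2}\sum_{j}a_{j}\langle X_{j},d\nu\rangle=0$; writing $u\equiv c$ for the constant value, this reads $|c|^{2}\int_{\partial\Omega}\sum_{j}a_{j}\langle X_{j},d\nu\rangle=0$. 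Under the strict positivity hypothesis \eqref{m2}, the integral factor is nonzero, forcing $c=0$, i.e. $u\equiv 0$.

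The steps are all routine given Green's first formula; the only point requiring a little care is the justification that a constant $u$ may be inserted back into the vanishing identity, which is immediate once we know $u$ is constant from the first part — so the two assertions are handled in sequence rather than independently. The main (minor) obstacle is simply bookkeeping the signs correctly: one must substitute the boundary condition before estimating, and make sure the orientation conventions in $\langle X_{j},d\nu\rangle$ are consistent with those used in Proposition \ref{stokes} and Proposition \ref{green1}, so that the measure appearing here is exactly the one whose sign is controlled by \eqref{m1} and \eqref{m2}. No new analytic input beyond \eqref{g1} and the bracket-generating property is needed.
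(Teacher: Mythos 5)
Your proposal is correct and follows essentially the same route as the paper's proof: set $v=u$ in Green's first formula \eqref{g1}, substitute the Robin condition \eqref{r0} into the boundary term, and conclude from the sign hypothesis \eqref{m1} that both sides vanish, giving $u\equiv\mathrm{const}$ and then $u\equiv 0$ under \eqref{m2}. Your handling of the second assertion (inserting the constant value $c$ back into the vanishing boundary identity to get $|c|^{2}\int_{\partial\Omega}\sum_{j}a_{j}\langle X_{j},d\nu\rangle=0$) is in fact a slightly cleaner formulation of the paper's concluding step.
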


\begin{proof}[Proof of Lemma \ref{robin}.]
Set $v=u$ in \eqref{g1}, then by \eqref{Ru0} and \eqref{r0} we get
\begin{multline}\label{Robin1}
\int_{\Omega}\mathcal{\widetilde{\nabla} }u u d\nu=\int_{\Omega}\left(\mathcal{\widetilde{\nabla} }u u+u\mathcal{L}u\right)d\nu
=\int_{\partial\Omega}u\langle \mathcal{\widetilde{\nabla} }u,d\nu\rangle \\
=\int_{\partial\Omega}u\sum_{j=1}^{N_{1}}X_{j}u\langle X_{j} ,d\nu\rangle
=-\int_{\partial\Omega}u^{2}\sum_{j=1}^{N_{1}}a_{j}\langle X_{j} ,d\nu\rangle,
\end{multline}
that is,
$$\int_{\Omega}\sum_{k=1}^{N_{1}}|X_{k}u|^{2}d\nu=-\int_{\partial\Omega}u^{2}\sum_{j=1}^{N_{1}}a_{j}\langle X_{j} ,d\nu\rangle.$$
Therefore
$$\int_{\Omega}\sum_{k=1}^{N_{1}}|X_{k}u|^{2}d\nu=0$$
and
$$\int_{\partial\Omega}u^{2}\sum_{j=1}^{N_{1}}a_{j}\langle X_{j} ,d\nu\rangle=0.$$
As above the first equality implies that $u$ is a constant. This proves the first part of the claim.

On the other hand, by the assumption \eqref{m2} the second equality implies $u=0$ on $\partial\Omega$, this means $u\equiv0$ on $\Omega$.
\end{proof}

We can also consider problems where Dirichlet or Robin conditions are imposed on different parts of the boundary:

\begin{cor}\label{DN}
Let $a_{k}:\partial\Omega\rightarrow {\mathbb R},\,\,k=1,...,N_{1},$ be bounded functions
such that the measure
\begin{equation}\label{m1a}
\sum_{j=1}^{N_1}a_j\langle X_{j} ,d\nu\rangle\geq 0
\end{equation}
is non-negative on $\partial\Omega$.
Let $\partial\Omega_{1}\subset \partial\Omega$, $\partial\Omega_{1}\neq \{\emptyset\}$ and  $\partial\Omega_{2}:= \partial\Omega\backslash\partial\Omega_{1}$.
Then the boundary value problem
\begin{equation}\label{DNu0}
\mathcal{L}u(x)=0,\,\, x\in\Omega\subset\mathbb{G},
\end{equation}
\begin{equation}\label{dn0}
u=0 \;\textrm{ on }\; \partial\Omega_{1},
\end{equation}
\begin{equation}\label{dn01}
\sum_{j=1}^{N_{1}}(a_{j}u+X_{j}u)\langle X_{j} ,d\nu\rangle=0 \;\textrm{ on }\; \partial\Omega_{2},\,\,
\end{equation}
has the unique trivial solution
$u\equiv 0$ in the class of functions $C^{2}(\Omega)\bigcap C^{1}(\overline{\Omega})$.
\end{cor}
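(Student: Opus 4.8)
The plan is to follow the same energy-identity scheme used in the proofs of Lemmas \ref{uniqueness}, \ref{neumann}, and \ref{robin}, now exploiting the splitting $\partial\Omega=\partial\Omega_{1}\cup\partial\Omega_{2}$. First I would set $v=u$ in Green's first formula \eqref{g1} and use $\mathcal{L}u=0$ from \eqref{DNu0} to get
$$
\int_{\Omega}\sum_{k=1}^{N_{1}}|X_{k}u|^{2}\,d\nu
=\int_{\partial\Omega}u\langle\widetilde{\nabla}u,d\nu\rangle
=\int_{\partial\Omega_{1}}u\langle\widetilde{\nabla}u,d\nu\rangle
+\int_{\partial\Omega_{2}}u\langle\widetilde{\nabla}u,d\nu\rangle.
$$
On $\partial\Omega_{1}$ the boundary condition \eqref{dn0} gives $u=0$, so the first boundary integral vanishes. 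On $\partial\Omega_{2}$, rewriting $\langle\widetilde{\nabla}u,d\nu\rangle=\sum_{j=1}^{N_{1}}X_{j}u\,\langle X_{j},d\nu\rangle$ and using \eqref{dn01} to substitute $\sum_j X_j u\,\langle X_j,d\nu\rangle=-\sum_j a_j u\,\langle X_j,d\nu\rangle$ on $\partial\Omega_{2}$, the second boundary integral becomes $-\int_{\partial\Omega_{2}}u^{2}\sum_{j=1}^{N_{1}}a_{j}\langle X_{j},d\nu\rangle$, which is $\leq 0$ by the sign assumption \eqref{m1a} (note the measure is non-negative on all of $\partial\Omega$, hence in particular on $\partial\Omega_{2}$).

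Consequently $\int_{\Omega}\sum_{k}|X_{k}u|^{2}\,d\nu\leq 0$, forcing both $\int_{\Omega}\sum_{k}|X_{k}u|^{2}\,d\nu=0$ and $\int_{\partial\Omega_{2}}u^{2}\sum_{j}a_{j}\langle X_{j},d\nu\rangle=0$. The vanishing of the Dirichlet-type energy gives $X_{k}u=0$ for all $k=1,\ldots,N_{1}$, and since the iterated commutators of $X_{1},\ldots,X_{N_{1}}$ span the whole Lie algebra (Hörmander condition in Definition \ref{maindef}(b)), $u$ is constant on $\Omega$ — exactly the step used in the earlier lemmas. Finally I would invoke the hypothesis $\partial\Omega_{1}\neq\emptyset$: by \eqref{dn0}, $u$ vanishes somewhere on $\overline{\Omega}$, so the constant must be zero, i.e. $u\equiv 0$.

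I do not expect any real obstacle here; the argument is a routine adaptation of Lemma \ref{robin}. The one point requiring a word of care is why a single nonempty piece $\partial\Omega_{1}$ of the boundary suffices to conclude $u\equiv0$: once $u$ is known to be a constant on the connected(?) domain $\Omega$, continuity of $u$ up to $\overline{\Omega}$ (which holds since $u\in C^{2}(\Omega)\cap C^{1}(\overline{\Omega})$) together with $u=0$ on $\partial\Omega_{1}$ pins the constant down. Strictly speaking one should note that admissibility of $\Omega$ (bounded with piecewise smooth simple boundary) ensures $\Omega$ is connected, so that "constant" means a single value; if one wished to allow disconnected $\Omega$ one would instead need $\partial\Omega_{1}$ to meet every connected component, but under Definition \ref{DEF:adomain} this subtlety does not arise. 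No use is made of the positivity/strict-positivity dichotomy \eqref{m2}: the presence of the genuine Dirichlet condition on $\partial\Omega_{1}$ already removes the freedom of an additive constant.
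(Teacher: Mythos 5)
Your proposal is correct and follows essentially the same route as the paper: the paper's proof simply says to repeat the argument of Lemma \ref{robin}, observing that the last equality in \eqref{Robin1} still holds after splitting the boundary integral over $\partial\Omega_{1}$ (where the integrand vanishes by the Dirichlet condition) and $\partial\Omega_{2}$ (where the Robin condition applies), and then the nonemptiness of $\partial\Omega_{1}$ forces the resulting constant to be zero. Your additional remarks on connectedness and on not needing the strict positivity hypothesis \eqref{m2} are sensible clarifications but do not alter the argument.
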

The proof of Corollary \ref{DN} follows the same argument as that in the proof
of Lemma \ref{robin}, and we observe that the last equality in \eqref{Robin1}
is still valid on both parts $\partial\Omega_{1}$ and $\partial\Omega_{2}$ of the boundary
$\partial\Omega$ using conditions \eqref{dn0} and \eqref{dn01}, respectively.

\medskip
As a consequence of the Green's first formula \eqref{g1} we obtain the following analogue of Green's second formula:
\begin{prop}[Green's second formula]
\label{green2}
Let $\Omega\subset\mathbb{G}$ be an admissible domain. Let $u,v\in C^{2}(\Omega)\bigcap C^{1}(\overline{\Omega}).$ Then
\begin{equation}\label{g2}
\int_{\Omega}(u\mathcal{L}v-v\mathcal{L}u)d\nu
=\int_{\partial\Omega}(u\langle\widetilde{\nabla}  v,d\nu\rangle-v\langle \widetilde{\nabla}  u,d\nu\rangle).
\end{equation}
\end{prop}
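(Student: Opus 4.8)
The plan is to derive Green's second formula directly from Green's first formula (Proposition \ref{green1}) by the standard symmetrization trick. Since both $u$ and $v$ belong to $C^{2}(\Omega)\bigcap C^{1}(\overline{\Omega})$, each of them is admissible both as the ``$v$-slot'' function (which only needs $C^{1}(\Omega)\bigcap C(\overline{\Omega})$) and as the ``$u$-slot'' function (which needs $C^{2}(\Omega)\bigcap C^{1}(\overline{\Omega})$) in \eqref{g1}. So I would apply \eqref{g1} twice.

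First I would write down Green's first formula with the given $v$ and $u$:
\begin{equation*}
\int_{\Omega}\left((\widetilde{\nabla}v)u+v\mathcal{L}u\right)d\nu=\int_{\partial\Omega}v\langle\widetilde{\nabla}u,d\nu\rangle,
\end{equation*}
recalling that $(\widetilde{\nabla}v)u=\sum_{k=1}^{N_{1}}(X_{k}v)(X_{k}u)$ is symmetric in $u$ and $v$. Then I would apply \eqref{g1} again with the roles of $u$ and $v$ interchanged:
\begin{equation*}
\int_{\Omega}\left((\widetilde{\nabla}u)v+u\mathcal{L}v\right)d\nu=\int_{\partial\Omega}u\langle\widetilde{\nabla}v,d\nu\rangle.
\end{equation*}
Subtracting the first identity from the second, the symmetric terms $(\widetilde{\nabla}u)v=(\widetilde{\nabla}v)u$ cancel on the left-hand side, leaving $\int_{\Omega}(u\mathcal{L}v-v\mathcal{L}u)\,d\nu$ on the left and $\int_{\partial\Omega}(u\langle\widetilde{\nabla}v,d\nu\rangle-v\langle\widetilde{\nabla}u,d\nu\rangle)$ on the right, which is exactly \eqref{g2}.

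There is essentially no obstacle here: the only point requiring a word of care is that the regularity hypotheses in Proposition \ref{green1} are asymmetric in $u$ and $v$, so one must check that the common hypothesis $u,v\in C^{2}(\Omega)\bigcap C^{1}(\overline{\Omega})$ is strong enough for both applications of \eqref{g1}—which it plainly is, since $C^{2}(\Omega)\bigcap C^{1}(\overline{\Omega})\subset C^{1}(\Omega)\bigcap C(\overline{\Omega})$. The cancellation of the bilinear gradient term uses only the manifest symmetry $\sum_{k}(X_{k}u)(X_{k}v)=\sum_{k}(X_{k}v)(X_{k}u)$, so no further computation is needed.
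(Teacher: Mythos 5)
Your proposal is correct and follows exactly the paper's own argument: apply Green's first formula \eqref{g1} twice with the roles of $u$ and $v$ interchanged, subtract, and use the symmetry $(\widetilde{\nabla}u)v=(\widetilde{\nabla}v)u$ to cancel the bilinear gradient terms. Your extra remark about the asymmetric regularity hypotheses in Proposition \ref{green1} being satisfied by both functions is a sensible (if implicit in the paper) point of care.
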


\begin{proof}[Proof of Proposition \ref{green2}.]
Rewriting \eqref{g1} we have
$$\int_{\Omega}\left((\mathcal{\widetilde{\nabla} }u) v+u\mathcal{L}v\right)d\nu=\int_{\partial\Omega}u\langle \mathcal{\widetilde{\nabla} }v,d\nu\rangle,$$
$$\int_{\Omega}\left((\mathcal{\widetilde{\nabla} }v) u+v\mathcal{L}u\right)d\nu=\int_{\partial\Omega}v\langle \mathcal{\widetilde{\nabla} }u,d\nu\rangle.$$
By subtracting the second identity from the first one
and using $$(\mathcal{\widetilde{\nabla} }u) v=(\mathcal{\widetilde{\nabla} }v) u$$
we obtain the desired result.
\end{proof}

Putting the fundamental solution $\varepsilon$ instead of $v$ in \eqref{g2} we get the following representation formulae that will be used later but are also of importance on their own.
We list them in the following corollaries.

\begin{cor}\label{repn}
Let $u\in C^{2}(\Omega)\bigcap C^{1}(\overline{\Omega})$. Then for $x\in\Omega$ we have
\begin{multline}\label{rep}
u(x)=\int_{\Omega}\varepsilon(x,y)\mathcal{L}u(y)d\nu(y)\\ +
\int_{\partial\Omega}u(y)
\langle\mathcal{\widetilde{\nabla} }\varepsilon(x,y),d\nu(y)\rangle-
\int_{\partial\Omega}\varepsilon(x,y)\langle\mathcal{\widetilde{\nabla} }u(y),d\nu(y)\rangle.
\end{multline}
\end{cor}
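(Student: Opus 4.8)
The plan is to deduce Corollary \ref{repn} from Green's second formula (Proposition \ref{green2}) by substituting $v(y)=\varepsilon(x,y)$, while carefully handling the singularity of $\varepsilon$ at $y=x$. The formal computation is immediate: since $\mathcal{L}_y\varepsilon(x,y)=\delta_x$ in the distributional sense, the term $\int_\Omega u(y)\mathcal{L}_y\varepsilon(x,y)\,d\nu(y)$ should produce $u(x)$, and the remaining three terms are exactly those appearing in \eqref{rep}. The only real work is to justify this rigorously, since $\varepsilon(x,\cdot)\notin C^2(\Omega)$, so Proposition \ref{green2} does not apply verbatim.

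First I would fix $x\in\Omega$ and choose $\rho>0$ small enough that the gauge ball $B_\rho(x)=\{y:d(x,y)<\rho\}$ satisfies $\overline{B_\rho(x)}\subset\Omega$. Apply Proposition \ref{green2} on the admissible domain $\Omega_\rho:=\Omega\setminus\overline{B_\rho(x)}$, on which $v(y)=\varepsilon(x,y)$ is smooth (indeed $\mathcal{L}$-harmonic away from $x$, so $\mathcal{L}v=0$ there). This yields
\begin{equation*}
-\int_{\Omega_\rho}\varepsilon(x,y)\mathcal{L}u(y)\,d\nu(y)
=\int_{\partial\Omega}\bigl(u\langle\widetilde{\nabla}\varepsilon(x,\cdot),d\nu\rangle-\varepsilon(x,\cdot)\langle\widetilde{\nabla}u,d\nu\rangle\bigr)
-\int_{\partial B_\rho(x)}\bigl(u\langle\widetilde{\nabla}\varepsilon(x,\cdot),d\nu\rangle-\varepsilon(x,\cdot)\langle\widetilde{\nabla}u,d\nu\rangle\bigr),
\end{equation*}
the sign and the extra boundary piece coming from the fact that $\partial\Omega_\rho=\partial\Omega\cup\partial B_\rho(x)$ with $\partial B_\rho(x)$ carrying the opposite orientation. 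It then remains to let $\rho\to0$: the left-hand side converges to $-\int_\Omega\varepsilon(x,y)\mathcal{L}u(y)\,d\nu(y)$ because $\varepsilon(x,\cdot)$ is locally integrable (it is homogeneous of degree $2-Q>-Q$) and $\mathcal{L}u$ is bounded near $x$; and on the small sphere $\partial B_\rho(x)$ the term $\int_{\partial B_\rho(x)}\varepsilon(x,\cdot)\langle\widetilde{\nabla}u,d\nu\rangle\to0$ by a size estimate, while $\int_{\partial B_\rho(x)}u(y)\langle\widetilde{\nabla}\varepsilon(x,y),d\nu(y)\rangle\to u(x)$.

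The main obstacle — and the step deserving the most care — is this last limit, namely showing $\int_{\partial B_\rho(x)}u(y)\langle\widetilde{\nabla}\varepsilon(x,y),d\nu(y)\rangle\to u(x)$ as $\rho\to0$. Here I would invoke Corollary \ref{COR:v1}, which already gives $\int_{\partial B_\rho(x)}\langle\widetilde{\nabla}\varepsilon(x,y),d\nu(y)\rangle=1$ (applying it to the admissible domain $B_\rho(x)$). Writing $u(y)=u(x)+(u(y)-u(x))$, the constant part contributes exactly $u(x)$, and the remainder is controlled by $\sup_{y\in\partial B_\rho(x)}|u(y)-u(x)|$ times the total variation of the measure $\langle\widetilde{\nabla}\varepsilon(x,y),d\nu(y)\rangle$ on $\partial B_\rho(x)$; continuity of $u$ handles the first factor, and one needs a uniform (in $\rho$) bound $\int_{\partial B_\rho(x)}|\langle\widetilde{\nabla}\varepsilon(x,y),d\nu(y)\rangle|\le C$ for the second, which follows from homogeneity: $\widetilde{\nabla}\varepsilon(x,\cdot)$ has components homogeneous of degree $1-Q$ and the surface form on $\partial B_\rho(x)$ scales like $\rho^{Q-1}$ under $\delta_\rho$. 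Since by the remark preceding Corollary \ref{COR:v1} Green's formulae remain valid for functions with weak singularities approximable by smooth ones, an alternative shortcut is to quote that principle directly; but the excision argument above is the clean rigorous route, and I would present it that way.
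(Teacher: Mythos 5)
Your proposal is correct and follows essentially the same route as the paper: the paper simply substitutes $v=\varepsilon(x,\cdot)$ into Green's second formula \eqref{g2}, appealing to the preceding remark that the Green formulae persist for weakly singular functions, which is exactly the formal computation you carry out. Your excision argument around $B_\rho(x)$, together with Corollary \ref{COR:v1} and the homogeneity bound on $\widetilde{\nabla}\varepsilon$, is just the standard rigorous justification of that same substitution.
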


\begin{cor}\label{repnhar}
Let $u\in C^{2}(\Omega)\bigcap C^{1}(\overline{\Omega})$ and $\mathcal{L}u=0$ on $\Omega$, then for $x\in\Omega$ we have
 \begin{equation}\label{rep}
u(x)=\int_{\partial\Omega}u(y)
\langle\mathcal{\widetilde{\nabla} }\varepsilon(x,y),d\nu(y)\rangle-
\int_{\partial\Omega}\varepsilon(x,y)
\langle\mathcal{\widetilde{\nabla}}u(y),d\nu(y)\rangle.
\end{equation}
\end{cor}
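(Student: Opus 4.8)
The plan is to deduce Corollary \ref{repnhar} immediately from the representation formula of Corollary \ref{repn}: that formula writes $u(x)$ as the sum of a volume integral $\int_{\Omega}\varepsilon(x,y)\mathcal{L}u(y)\,d\nu(y)$ and the two boundary integrals appearing in the present claim, so under the hypothesis $\mathcal{L}u=0$ on $\Omega$ the volume term vanishes and exactly the asserted identity remains. Hence all the substance lies in Corollary \ref{repn}, whose proof I now outline.

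To prove the formula of Corollary \ref{repn} I would apply Green's second formula (Proposition \ref{green2}) with the first function $u$ as given and the second function $v(y)=\varepsilon(x,y)$, for fixed $x\in\Omega$. The obstruction is that $\varepsilon(x,\cdot)=d(x,\cdot)^{2-Q}$ is singular at $y=x$, so $\varepsilon(x,\cdot)\notin C^{2}(\Omega)\cap C^{1}(\overline{\Omega})$ and \eqref{g2} is not directly applicable. The standard remedy is excision: choose $\rho>0$ with $\overline{B(x,\rho)}\subset\Omega$, where $B(x,\rho)=\{y:d(x,y)<\rho\}$ is the gauge ball, and apply \eqref{g2} on the admissible domain $\Omega_{\rho}:=\Omega\setminus\overline{B(x,\rho)}$, on which $v=\varepsilon(x,\cdot)$ is smooth and $\mathcal{L}_{y}\varepsilon(x,y)=0$ for $y\neq x$. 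With $\partial B(x,\rho)$ carrying the orientation induced as the boundary of the ball, this yields
$$-\int_{\Omega_{\rho}}\varepsilon(x,y)\mathcal{L}u(y)\,d\nu(y)=\int_{\partial\Omega}\left(u\langle\widetilde{\nabla}\varepsilon,d\nu\rangle-\varepsilon\langle\widetilde{\nabla}u,d\nu\rangle\right)-\int_{\partial B(x,\rho)}\left(u\langle\widetilde{\nabla}\varepsilon,d\nu\rangle-\varepsilon\langle\widetilde{\nabla}u,d\nu\rangle\right).$$

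It then remains to pass to the limit $\rho\to0^{+}$. The left-hand side converges to $-\int_{\Omega}\varepsilon(x,y)\mathcal{L}u(y)\,d\nu(y)$ since $d(x,\cdot)^{2-Q}$ is locally integrable near $x$ (because $Q\geq3$). For the sphere terms: on $\partial B(x,\rho)$ one has $\varepsilon(x,y)=\rho^{2-Q}$, while the total variation of the measure $\sum_{k}(X_{k}u)\langle X_{k},d\nu\rangle$ over $\partial B(x,\rho)$ is $O(\rho^{Q-1})$ by the $\delta_{\lambda}$-homogeneity of the forms $\langle X_{k},d\nu\rangle$ together with boundedness of $X_{k}u$ near $x$, so $\int_{\partial B(x,\rho)}\varepsilon\langle\widetilde{\nabla}u,d\nu\rangle=O(\rho)\to0$. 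Finally, $\int_{\partial B(x,\rho)}u(y)\langle\widetilde{\nabla}\varepsilon(x,y),d\nu(y)\rangle\to u(x)$: Corollary \ref{COR:v1}, applied with the gauge ball $B(x,\rho)$ in place of $\Omega$, gives $\int_{\partial B(x,\rho)}\langle\widetilde{\nabla}\varepsilon(x,y),d\nu(y)\rangle=1$ for every small $\rho$, and writing $u(y)=u(x)+(u(y)-u(x))$ the remainder is bounded by $\sup_{d(x,y)=\rho}|u(y)-u(x)|$ times the uniformly bounded total variation of $\langle\widetilde{\nabla}\varepsilon(x,\cdot),d\nu\rangle$ on $\partial B(x,\rho)$, which tends to $0$ by continuity of $u$ at $x$. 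Assembling the three limits yields the formula of Corollary \ref{repn}, and imposing $\mathcal{L}u=0$ then yields Corollary \ref{repnhar}.

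The main obstacle is the analysis on the shrinking gauge sphere $\partial B(x,\rho)$. One must first check that gauge balls are admissible domains (their boundaries are level sets of $d(x,\cdot)$, which is smooth and positive on $\mathbb{G}\setminus\{x\}$) so that \eqref{g2} and Corollary \ref{COR:v1} apply to them, and then verify the normalization $\int_{\partial B(x,\rho)}\langle\widetilde{\nabla}\varepsilon(x,y),d\nu(y)\rangle=1$ and the $O(\rho)$ decay of the residual integrals, which rests on carefully tracking the $\delta_{\lambda}$-homogeneity of $\varepsilon$ (degree $2-Q$) and of the pairings $\langle X_{k},d\nu\rangle$. A point worth emphasizing is that, even though $\partial B(x,\rho)$ is smooth, it typically contains characteristic points of $\mathcal{L}$, so it is essential that the whole argument be run with the everywhere-defined forms $\langle X_{k},d\nu\rangle$ of Proposition \ref{stokes} rather than with any surface-measure version of the divergence theorem.
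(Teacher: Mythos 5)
Your proposal is correct and follows essentially the same route as the paper: the paper obtains Corollaries \ref{repn} and \ref{repnhar} precisely by substituting $v=\varepsilon(x,\cdot)$ into Green's second formula \eqref{g2}, invoking the earlier remark that these formulae extend to functions with weak singularities, and the vanishing of the volume term when $\mathcal{L}u=0$ then gives Corollary \ref{repnhar}. Your excision argument around the gauge ball $B(x,\rho)$ simply supplies the standard justification that the paper leaves implicit.
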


\begin{cor}\label{repD}
Let $u\in C^{2}(\Omega)\bigcap C^{1}(\overline{\Omega})$ and
\begin{equation}\label{D1}
u(x)=0,\,\, x\in\partial\Omega,
\end{equation}
then
\begin{equation}
u(x)=\int_{\Omega}\varepsilon(x,y)\mathcal{L}u(y)d\nu(y)-
\int_{\partial\Omega}\varepsilon(x,y)\langle\mathcal{\widetilde{\nabla} }u(y),d\nu(y)\rangle.
\end{equation}
\end{cor}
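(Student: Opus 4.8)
The plan is to read this off directly from the general representation formula in Corollary \ref{repn}. First I would recall that for an arbitrary $u\in C^{2}(\Omega)\cap C^{1}(\overline{\Omega})$ and any $x\in\Omega$, formula \eqref{rep} of Corollary \ref{repn} reads
\[
u(x)=\int_{\Omega}\varepsilon(x,y)\mathcal{L}u(y)\,d\nu(y)+\int_{\partial\Omega}u(y)\langle\widetilde{\nabla}\varepsilon(x,y),d\nu(y)\rangle-\int_{\partial\Omega}\varepsilon(x,y)\langle\widetilde{\nabla}u(y),d\nu(y)\rangle.
\]
Then I would invoke the Dirichlet condition \eqref{D1}, namely $u(y)=0$ for every $y\in\partial\Omega$: the integrand of the middle (double-layer) term $\int_{\partial\Omega}u(y)\langle\widetilde{\nabla}\varepsilon(x,y),d\nu(y)\rangle$ then vanishes identically on $\partial\Omega$, so that term drops out and what remains is precisely the claimed identity. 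This is essentially a one-line substitution.

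The only point deserving a remark is that Corollary \ref{repn} itself was obtained by putting $v(y)=\varepsilon(x,y)$ into Green's second formula \eqref{g2}, and $\varepsilon(x,\cdot)$ is singular at the interior point $y=x$; as indicated in the discussion preceding Corollary \ref{COR:v1}, this is handled by the classical device of removing a small gauge ball around $x$, applying Proposition \ref{green2} on the punctured domain, and letting the radius tend to zero, the inner-boundary contribution converging to $u(x)$ by Corollary \ref{COR:v1} together with the continuity of $u$ and the integrability of the homogeneous-degree-$(2-Q)$ kernel $\varepsilon$ against the surface element. Since that limiting argument is already built into Corollary \ref{repn}, no new estimates are required at this stage.

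Consequently I do not anticipate any genuine obstacle: the statement is a routine specialization of Corollary \ref{repn} to functions vanishing on $\partial\Omega$, and the proof is complete once the substitution $u|_{\partial\Omega}=0$ is carried out.
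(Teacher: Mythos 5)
Your proposal is correct and matches the paper's (implicit) argument exactly: Corollary \ref{repD} is obtained from the representation formula of Corollary \ref{repn} simply by noting that the double layer term $\int_{\partial\Omega}u(y)\langle\widetilde{\nabla}\varepsilon(x,y),d\nu(y)\rangle$ vanishes when $u=0$ on $\partial\Omega$. Your additional remark on the singularity of $\varepsilon(x,\cdot)$ being handled inside Corollary \ref{repn} is consistent with the paper's stated convention of applying Green's formulae to the fundamental solution via approximation.
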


\begin{cor}\label{repN}
Let $u\in C^{2}(\Omega)\bigcap C^{1}(\overline{\Omega})$ and
\begin{equation}\label{n1}
\sum_{j=1}^{N_{1}}X_{j}u\langle X_{j} ,d\nu\rangle=0 \;\textrm{ on }\; \partial\Omega,
\end{equation}
then
 \begin{equation}\label{rep}
u(x)=\int_{\Omega}\varepsilon(x,y)\mathcal{L}u(y)d\nu(y)+\int_{\partial\Omega}u(y)
\langle\mathcal{\widetilde{\nabla}}\varepsilon(x,y),d\nu(y)\rangle.
\end{equation}
\end{cor}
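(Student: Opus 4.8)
The plan is to read off Corollary~\ref{repN} as an immediate specialization of the representation formula in Corollary~\ref{repn}, in direct analogy with how Corollary~\ref{repD} handled the Dirichlet case. First I would invoke Corollary~\ref{repn}: since $u\in C^{2}(\Omega)\bigcap C^{1}(\overline{\Omega})$, for every $x\in\Omega$ we already have
$$u(x)=\int_{\Omega}\varepsilon(x,y)\mathcal{L}u(y)\,d\nu(y)
+\int_{\partial\Omega}u(y)\langle\widetilde{\nabla}\varepsilon(x,y),d\nu(y)\rangle
-\int_{\partial\Omega}\varepsilon(x,y)\langle\widetilde{\nabla}u(y),d\nu(y)\rangle,$$
where the use of the (weakly) singular $\varepsilon(x,\cdot)$ in Green's second formula has already been justified in the discussion preceding Corollary~\ref{repn}.

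Next I would show that the last boundary integral drops out under hypothesis~\eqref{n1}. Expanding the form, $\langle\widetilde{\nabla}u(y),d\nu(y)\rangle=\sum_{j=1}^{N_{1}}(X_{j}u(y))\langle X_{j},d\nu(y)\rangle$, so the integrand on $\partial\Omega$ is $\varepsilon(x,y)\sum_{j=1}^{N_{1}}(X_{j}u(y))\langle X_{j},d\nu(y)\rangle$. By~\eqref{n1} the measure $\sum_{j=1}^{N_{1}}(X_{j}u)\langle X_{j},d\nu\rangle$ is the zero measure on $\partial\Omega$; multiplying it by the scalar function $\varepsilon(x,\cdot)$, which for fixed $x\in\Omega$ is bounded on $\partial\Omega$ since $x$ stays away from the boundary, still yields the zero measure on $\partial\Omega$. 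Hence $\int_{\partial\Omega}\varepsilon(x,y)\langle\widetilde{\nabla}u(y),d\nu(y)\rangle=0$.

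Substituting this vanishing into the formula from Corollary~\ref{repn} leaves exactly
$$u(x)=\int_{\Omega}\varepsilon(x,y)\mathcal{L}u(y)\,d\nu(y)
+\int_{\partial\Omega}u(y)\langle\widetilde{\nabla}\varepsilon(x,y),d\nu(y)\rangle,$$
which is the assertion. I do not anticipate any real obstacle: all the analytic work (Green's formulae, the admissibility of $\Omega$, and the handling of characteristic points on $\partial\Omega$) is already packaged into Corollary~\ref{repn}, and the only thing to check here is the elementary fact that the product of a bounded scalar function with a vanishing form on $\partial\Omega$ is again a vanishing form, so that the cancellation in the boundary term is legitimate.
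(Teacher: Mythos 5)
Your proposal is correct and coincides with the paper's own (essentially unwritten) argument: the paper derives Corollary \ref{repN} simply by specializing the representation formula of Corollary \ref{repn} and observing that the hypothesis \eqref{n1} makes the single-layer boundary term vanish, exactly as you do. Your extra remark that $\varepsilon(x,\cdot)$ is bounded on $\partial\Omega$ for fixed $x\in\Omega$ is a harmless (and correct) bit of added care.
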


\section{Single and double layer potentials of the sub-Laplacian}
\label{SEC:3}

Recall that the sub-Laplacian has a unique fundamental solution
$\varepsilon$ on $\mathbb{G}$,
see \eqref{fundsol}, given by
$$
\varepsilon(x,y)= [d(x,y)]^{2-Q},
$$
where $d$ is the $\mathcal{L}$-gauge, so that the function
$\varepsilon$ is homogeneous of degree $-Q+2$.

Let $D\subset \mathbb{R}^{N}$ be an open set with boundary $\partial D$. The set $D$ is called a domain of class $C^{1}$ if for each $x_{0}\in\partial D$ there exist a neighborhood $U_{x_{0}}$ of $x_{0},$ and
a function $\phi_{x_{0}}\in C^{1}(U_{x_{0}}),$ with $|\nabla \phi_{x_{0}}|\geq \alpha>0$ in $U_{x_{0}}$, where $\nabla$ is the standard gradient in $\mathbb{R}^{N}$, such that
$$
D\cap U_{x_{0}}=\{x\in U_{x_{0}}\mid \phi_{x_{0}}(x)<0\},
$$
$$
\partial D\cap U_{x_{0}}=\{x\in U_{x_{0}}\mid \phi_{x_{0}}(x)=0\}.
$$
So let $D$ be an open domain of class $C^{1}$. A point $x_{0}\in \partial D$ is called \emph{characteristic} with respect to fields $\{X_{1},...,X_{N_{1}}\},$ if given $U_{x_{0}},$ $\phi_{x_{0}},$ as above, we have
$$X_{1}\phi_{x_{0}}(x_{0})=0,\ldots, X_{N_{1}}\phi_{x_{0}}(x_{0})=0.$$
Typically, bounded domains have non-empty collection (set) of all characteristic points. For example, any bounded domain of class $C^{1}$ in the Heisenberg group $\mathbb{H}^{n}$, whose boundary is homeomorphic to the $2n-$dimensional sphere $\mathbb{S}^{2n}$, has non-empty characteristic set (see, for example, \cite{DGN}).

We record relevant single and double layer potentials for the sub-Laplacian.
In \cite{J}, Jerison used the single layer potential defined by
$$
\mathcal{S}_0 u(x)=\int_{\partial\Omega} u(y) \varepsilon(y,x) dS(y),
$$
which, however, is not integrable over characteristic points. We refer to \cite{R91} for examples. On the contrary, the functionals
\begin{equation}\label{S}
\mathcal{S}_{j} u(x)=\int_{\partial\Omega} u(y) \varepsilon(y,x) \langle X_j, d\nu(y)\rangle,\quad j=1,...,N_{1},
\end{equation}
where $\langle X_{j}, d\nu\rangle$ is the canonical pairing between vector fields
and differential forms, are integrable over the whole boundary $\partial\Omega$ (see Lemma \ref{kl4}).
Parallel to $\mathcal{S}_{j}$, it will be natural to
use the operator
\begin{equation}\label{EQ:dp}
\mathcal{D}u(x)=\int_{\partial \Omega} u(y)\langle \widetilde{\nabla} \varepsilon(y,x),d\nu(y)\rangle,
\end{equation}
as a double layer potential, where $$\widetilde{\nabla} \varepsilon=\sum_{k=1}^{N_{1}}(X_{k}\varepsilon) X_{k},$$ with $X_{k}$ acting on the $y$-variable.

So let us define a family of single layer potentials
by Formula \eqref{S}. First we will prove the following lemma.
\begin{lem}\label{kl4}
Let $\partial\Omega$ be the boundary of an admissible domain $\Omega\subset\mathbb{G}$.
Then
$$\int_{\partial\Omega}[d(x,y)]^{2-Q}\langle X_{j}, d\nu(y)\rangle$$
is a convergent integral for any $x\in\mathbb{G}$ and $x\not\in\partial\Omega$.
\end{lem}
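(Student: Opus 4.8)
The plan is to show that the integrand $[d(x,y)]^{2-Q}\langle X_j, d\nu(y)\rangle$ is genuinely integrable over $\partial\Omega$ by a careful local analysis near the only problematic situation, namely when $x\in\partial\Omega$; for $x\notin\partial\Omega$ (which is the case actually asserted) the kernel $[d(x,y)]^{2-Q}$ is bounded on the compact set $\partial\Omega$, so convergence is immediate once we check that the form $\langle X_j, d\nu\rangle$ restricts to a finite (signed) measure on the piecewise smooth surface $\partial\Omega$. First I would observe from the explicit formula \eqref{Xjdnu} that $\langle X_k, d\nu(y)\rangle=\bigwedge_{j\neq k}dx_j^{(1)}\bigwedge_{l=2}^r\bigwedge_{m=1}^{N_l}\theta_{l,m}$, and since each $\theta_{l,m}$ differs from $dx_m^{(l)}$ by a linear combination of the $dx_k^{(1)}$ with polynomial (hence locally bounded) coefficients, the pullback of $\langle X_k, d\nu\rangle$ to any smooth piece of $\partial\Omega$ is a smooth $(N-1)$-form with coefficients that are polynomials in the coordinates restricted to that piece; integrating its absolute value over the (compact) boundary gives a finite total variation. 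Combined with $\sup_{y\in\partial\Omega}[d(x,y)]^{2-Q}<\infty$ for $x\notin\partial\Omega$, this already yields the claim.

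Since however the point of the lemma is that the same integral converges even near the boundary (this is what is needed for the layer potentials in Theorem \ref{S} and Theorem \ref{doublelayer}), I would actually carry out the harder estimate uniformly: fix a point $x_0\in\partial\Omega$ and split $\partial\Omega$ into a small surface patch $\Sigma$ around $x_0$ and its complement. On the complement $d(x,y)$ stays bounded below for $x$ near $x_0$, so that part is harmless as above. On $\Sigma$ I would distinguish two cases according to whether $x_0$ is a characteristic point. At a non-characteristic point, after choosing $\phi_{x_0}$ defining $\partial\Omega$ locally, one of $X_1\phi_{x_0},\dots,X_{N_1}\phi_{x_0}$ is nonzero at $x_0$, and the form $\langle X_j, d\nu\rangle$ restricted to $\Sigma$ is comparable to the Euclidean surface measure (in suitable coordinates adapted to $\phi_{x_0}$), reducing the question to the local integrability of $[d(x,y)]^{2-Q}$ against a surface measure on an $(N-1)$-dimensional smooth hypersurface; this is the standard fact that the Newton-type kernel of homogeneity $2-Q$ is integrable on hypersurfaces (using that $d$ is a homogeneous quasi-norm and the homogeneous dimension is $Q$), which one verifies by the usual dyadic decomposition $\{2^{-k-1}\le d(x,y)\le 2^{-k}\}$ together with the Ahlfors-type volume estimate for $d$-balls.

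The main obstacle, and the reason the particular choice \eqref{S} of single layer potential is superior to Jerison's $\mathcal{S}_0$, is the behavior of $\langle X_j, d\nu\rangle$ at characteristic points $x_0\in\partial\Omega$. There the pullback of $\langle X_j,d\nu\rangle$ to $\partial\Omega$ degenerates: it vanishes to positive order at $x_0$, precisely because $X_j$ becomes tangent to $\partial\Omega$. I would quantify this degeneracy using the defining function $\phi_{x_0}$: on $\Sigma$ the density of $\langle X_j, d\nu\rangle$ with respect to Euclidean surface measure is $\sim |X_j\phi_{x_0}|/|\nabla\phi_{x_0}|$ up to bounded factors, and at a characteristic point this density is controlled by the distance (in $\mathbb{G}$) to the characteristic set, which compensates for the growth of $[d(x,y)]^{2-Q}$. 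Comparing with the analogous computation in \cite{R91} for the Heisenberg group, the worst case is when $x\to x_0$ along the normal direction; there one balances the vanishing of the form against the blow-up of the kernel and finds the product still integrable, whereas $dS(y)$ alone does not provide this cancellation — which is exactly the obstruction that makes $\mathcal{S}_0$ fail. Putting the non-characteristic estimate, the characteristic estimate, and the harmless far-away part together gives convergence of $\int_{\partial\Omega}[d(x,y)]^{2-Q}\langle X_j, d\nu(y)\rangle$ for every $x\in\mathbb{G}$, in particular for $x\notin\partial\Omega$, completing the proof.
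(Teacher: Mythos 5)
Your first paragraph already proves the lemma as stated, and it does so by a genuinely different and more elementary route than the paper: since $x\notin\partial\Omega$ and $\partial\Omega$ is compact, $d(x,\cdot)$ is bounded away from zero there, so $[d(x,y)]^{2-Q}$ is bounded on $\partial\Omega$, and the form $\langle X_j,d\nu\rangle$ from \eqref{Xjdnu} has polynomial coefficients and hence finite total variation on the compact piecewise smooth boundary. The paper instead converts the boundary integral into a volume integral over $\Omega$ via the divergence formula of Proposition \ref{stokes}, bounds $|X_j[d(x,y)]^{2-Q}|$ by $C\,[d(x,y)]^{1-Q}$ using homogeneity, and integrates in polar coordinates for the quasi-norm: $\int_{B_R}[d(x,y)]^{1-Q}\,d\nu(y)=C\int_0^R r^{1-Q}r^{Q-1}\,dr<\infty$. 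What the paper's route buys is a bound that is uniform in $x$ and insensitive to whether $x$ lies far from, near, or even on $\partial\Omega$; that uniformity is exactly what gets reused in the proofs of Theorem \ref{singlelayer} and Lemma \ref{claim2}, where the base point does sit on the boundary. Your remaining two paragraphs attempt that harder boundary case by working directly on the surface, splitting into characteristic and non-characteristic points; this is not needed for the lemma as stated, and its key quantitative step --- that the density of $\langle X_j,d\nu\rangle$ with respect to surface measure vanishes near the characteristic set at a rate compensating the blow-up of $[d(x,y)]^{2-Q}$ --- is asserted rather than proved. If you want the boundary case, the paper's divergence-formula trick delivers it with no surface analysis at all.
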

\begin{proof} [Proof of Lemma \ref{kl4}]
We have
\begin{multline*}
\int_{\partial\Omega}[d(x,y)]^{2-Q}\langle X_{j}, d\nu(y)\rangle
\\
=\int_{\Omega}X_{j}[d(x,y)]^{2-Q}d\nu(y)\leq \int_{\Omega}\mid X_{j}[d(x,y)]^{2-Q} \mid d\nu(y)
\\
\leq
\int_{B_{R}}\mid X_{j}[d(x,y)]^{2-Q} \mid d\nu(y)=C\int_{0}^{R}r^{1-Q}r^{Q-1}dr<\infty.
\end{multline*}
where
$B_{R}:=\{y: d(x,y)<R\}$ is a ball such that $\Omega\subset B_{R}.$
\end{proof}

\begin{thm}\label{singlelayer}
Let $\partial\Omega$ be the boundary of an admissible domain $\Omega\subset\mathbb{G}$. Let $u$ be bounded on $\partial\Omega$, that is, $u\in L^{\infty}(\partial \Omega)$. Then the single layer potential $\mathcal{S}_{j}u$ is continuous on $\mathbb G$, for all $j=1,\ldots,N_{1}$.
\end{thm}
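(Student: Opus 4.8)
The plan is to prove continuity of $\mathcal{S}_j u$ at every point of $\mathbb{G}$ by splitting into the two essentially different cases: points $x_0 \notin \partial\Omega$, where the kernel is smooth in $x$ near $x_0$ and continuity is routine; and points $x_0 \in \partial\Omega$, where the kernel $\varepsilon(y,x)$ develops its homogeneity-$(2-Q)$ singularity as $x \to x_0$ along $y \in \partial\Omega$, and some care is needed. For the first case I would fix $x_0$ with $d(x_0,\partial\Omega) = \delta > 0$; for $x$ in the ball $\{d(x,x_0) < \delta/(2C)\}$ (with $C$ the quasi-metric constant), the quasi-triangle inequality gives a uniform lower bound $d(x,y) \gtrsim \delta$ for all $y \in \partial\Omega$, so $\varepsilon(y,x) = [d(y,x)]^{2-Q}$ is bounded and jointly continuous in $(x,y)$ there; since $|u| \le \|u\|_{L^\infty(\partial\Omega)}$ and, by Lemma \ref{kl4} applied with any fixed reference point, the measure $|\langle X_j, d\nu(y)\rangle|$ has finite total mass on $\partial\Omega$, dominated convergence yields $\mathcal{S}_j u(x) \to \mathcal{S}_j u(x_0)$.

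The substance of the argument is the second case, $x_0 \in \partial\Omega$. Here I would write, for $x$ near $x_0$,
\begin{equation*}
\mathcal{S}_j u(x) - \mathcal{S}_j u(x_0) = \int_{\partial\Omega} u(y)\bigl(\varepsilon(y,x) - \varepsilon(y,x_0)\bigr)\langle X_j, d\nu(y)\rangle,
\end{equation*}
and split $\partial\Omega = (\partial\Omega \cap B_\rho(x_0)) \cup (\partial\Omega \setminus B_\rho(x_0))$ for a small radius $\rho$. On the far piece the integrand is again uniformly bounded and continuous in $x$ (as in the first case, once $d(x,x_0) < \rho/(2C)$), so that contribution $\to 0$ as $x \to x_0$ for each fixed $\rho$. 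On the near piece one cannot use continuity of the kernel; instead I would bound the two terms separately: $\int_{\partial\Omega \cap B_\rho(x_0)} |u(y)|\, [d(y,x_0)]^{2-Q}\, |\langle X_j, d\nu(y)\rangle|$ is small for $\rho$ small because, converting the boundary integral to a solid integral exactly as in the proof of Lemma \ref{kl4} — i.e. $\int_{\partial\Omega \cap B_\rho} [d(y,x_0)]^{2-Q} \langle X_j,d\nu\rangle = \int_{\Omega \cap B_\rho(x_0)} X_j [d(y,x_0)]^{2-Q}\, d\nu(y)$, which is $O\!\bigl(\int_0^\rho r^{1-Q} r^{Q-1}\,dr\bigr) = O(\rho)$ — this is the tail of a convergent integral and hence $\to 0$ as $\rho \to 0$, uniformly in $x_0 \in \partial\Omega$; and, crucially, the analogous term with $x$ in place of $x_0$, namely $\int_{\partial\Omega \cap B_\rho(x_0)} |u(y)|\,[d(y,x)]^{2-Q}\,|\langle X_j, d\nu\rangle|$, must be shown small uniformly for $x$ in a small neighborhood of $x_0$. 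For the latter I would use the quasi-triangle inequality $d(y,x_0) \le C(d(y,x) + d(x,x_0))$ to see that $\partial\Omega \cap B_\rho(x_0)$ is contained in $\partial\Omega \cap B_{\rho'}(x)$ with $\rho' = C\rho + C d(x,x_0) \le 2C\rho$ once $d(x,x_0) \le \rho$, and then apply the same solid-integral estimate centered at $x$: $\int_{\partial\Omega \cap B_{\rho'}(x)}[d(y,x)]^{2-Q}\langle X_j,d\nu\rangle = O(\rho')= O(\rho)$, with the constant depending only on $\mathbb{G}$ and a fixed large ball containing $\Omega$, not on $x$ or $x_0$.

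Combining: given $\varepsilon' > 0$, first choose $\rho$ so that both near-piece bounds are $< \varepsilon'/3$ for all $x$ with $d(x,x_0) \le \rho$; then, with $\rho$ fixed, choose a smaller neighborhood of $x_0$ on which the far-piece contribution is $< \varepsilon'/3$; this gives $|\mathcal{S}_j u(x) - \mathcal{S}_j u(x_0)| < \varepsilon'$, proving continuity at $x_0$. I expect the main obstacle to be bookkeeping the uniformity in the near-piece estimate — specifically, making sure the passage from balls centered at $x_0$ to balls centered at $x$ via the quasi-metric inequality does not hide a constant that blows up as $x \to x_0$; this is handled, as indicated, by the scale-invariant estimate $\int_{B_r} |X_j [d(\cdot,x)]^{2-Q}|\,d\nu = O(r)$ coming from the homogeneity of $\varepsilon$ and $X_j$ together with the fact that $\partial\Omega$ is a fixed bounded piecewise smooth (in particular, locally finite perimeter) surface, so the boundary-to-solid reduction of Lemma \ref{kl4} applies uniformly on small balls. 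A minor additional point worth a remark is that $\langle X_j, d\nu\rangle$ is a signed measure; replacing it by its total variation throughout is harmless since all the above estimates are of absolute-value type.
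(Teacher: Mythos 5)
Your overall strategy is the same as the paper's: continuity at boundary points is reduced, via the divergence formula of Proposition \ref{stokes}, to the volume estimate $\int_{B_{\epsilon}}|X_{j}[d(\cdot,x)]^{2-Q}|\,d\nu=O(\epsilon)$ coming from the homogeneity of $\varepsilon$. The difference is only the order of operations: the paper first converts the \emph{whole} boundary integral $\int_{\partial\Omega}(\varepsilon(y,x)-\varepsilon(y,x_{0}))\langle X_{j},d\nu\rangle$ into $\int_{\Omega}X_{j}(\cdots)\,d\nu$ and only then splits $\Omega=(\Omega\setminus\Omega_{\epsilon})\cup\Omega_{\epsilon}$, whereas you split $\partial\Omega$ into a near and a far piece first and convert only the near piece.

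That reordering is where the one concrete flaw sits. The displayed identity
$\int_{\partial\Omega\cap B_{\rho}(x_{0})}[d(y,x_{0})]^{2-Q}\langle X_{j},d\nu\rangle=\int_{\Omega\cap B_{\rho}(x_{0})}X_{j}[d(y,x_{0})]^{2-Q}\,d\nu(y)$
is \emph{not} ``exactly as in the proof of Lemma \ref{kl4}'': there the divergence formula is applied over all of $\Omega$, whose boundary is exactly $\partial\Omega$. The boundary of $\Omega\cap B_{\rho}(x_{0})$ is $(\partial\Omega\cap B_{\rho}(x_{0}))\cup(\Omega\cap\partial B_{\rho}(x_{0}))$, so Stokes produces an additional term over the interior cap $\Omega\cap\partial B_{\rho}(x_{0})$ which your identity silently drops. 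The estimate survives --- the form $\langle X_{j},d\nu\rangle$ is $\delta_{\lambda}$-homogeneous of degree $Q-1$, so after left-translating $x_{0}$ to the origin the cap carries mass $O(\rho^{Q-1})$ and contributes $\rho^{2-Q}\cdot O(\rho^{Q-1})=O(\rho)$ --- but this term must be written down and estimated, not omitted. A second, smaller point: after pulling out $\|u\|_{L^{\infty}}$ your near-piece bound needs the \emph{total variation} integral $\int_{\partial\Omega\cap B_{\rho}}d^{2-Q}\,|\langle X_{j},d\nu\rangle|$ to be small, whereas the divergence formula only controls the integral against the signed form; your closing remark that passing to the total variation is ``harmless since all the above estimates are of absolute-value type'' has this backwards, and one should argue via the surface-measure representation $\langle X_{j},d\nu\rangle=\langle v,X_{j}\rangle_{E}\,dS$ of Section \ref{SEC:mes} (the paper's own proof is equally silent on this). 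Your uniformity bookkeeping via left-invariance and the quasi-triangle inequality is fine, and is in fact more careful than the paper's interchange of the limits $x\to x_{0}$ and $\epsilon\to 0$.
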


\begin{proof}[Proof of Theorem \ref{singlelayer}.]
Let $x\in \mathbb{G}, x_{0}\not\in\partial\Omega,$ then
\begin{multline*}
| \mathcal{S}_{j}u(x) - \mathcal{S}_{j}u(x_{0})   | =  | \int _{\partial \Omega } u(y)(\varepsilon(y,x)  - \varepsilon(y,x_{0})) \langle X_{j},d\nu(y)  \rangle |
\\
\leq \underset{y\in\partial\Omega}{\sup}  | u(y)| | \int _{\partial \Omega} \varepsilon(y,x)  - \varepsilon(y,x_{0})\langle X_{j},d\nu(y) \rangle|.
\end{multline*}
This means
$$\underset{x\rightarrow x_{0}}{\lim}\mathcal{S}_{j}u(x)=\mathcal{S}_{j}u(x_{0}),$$
that is, the single layer potential $\mathcal{S}_{j}u$ is continuous in
$\mathbb G\backslash\partial\Omega.$
Now let $x\in \mathbb{G}, x_{0}\in\partial\Omega.$
Let
$\Omega_{\epsilon}:=\{y\in \Omega: d(x_{0},y)<\epsilon\}$.
Then
$$
| \mathcal{S}_{j}u(x) - \mathcal{S}_{j}u(x_{0})   | =  | \int _{\partial \Omega } u(y)(\varepsilon(y,x)  - \varepsilon(y,x_{0})) \langle X_{j},d\nu(y)  \rangle |
$$
$$
=  \underset{y\in\partial\Omega}{\sup}  | u(y)|\, | \int _{\partial \Omega }
(\varepsilon(y,x)  - \varepsilon(y,x_{0})) \langle X_{j},d\nu(y)  \rangle |
$$
$$
=  \underset{y\in\partial\Omega}{\sup}  | u(y)| \,| \int _{\Omega } X_{j}(\varepsilon(y,x)  - \varepsilon(y,x_{0})) d\nu(y)|
$$
$$
\leq \underset{y\in\partial\Omega}{\sup}  | u(y)| \;\underset{\epsilon\rightarrow 0}{\lim}\bigg(|\int _{\Omega\backslash\Omega_{\epsilon}} X_{j}(\varepsilon(y,x)  - \varepsilon(y,x_{0})) d\nu(y)|$$
$$+|\int _{\Omega_{\epsilon}}X_{j}(\varepsilon(y,x)  - \varepsilon(y,x_{0})) d\nu(y)|\bigg),
$$
where the first term tends to zero when $x\rightarrow x_{0}$.
Now it is left to show that the second term tends to zero, and we have
\begin{multline*}
\underset{\epsilon\rightarrow 0}{\lim}\int _{\Omega_{\epsilon}}X_{j}(\varepsilon(y,x)  - \varepsilon(y,x_{0})) d\nu(y)
\\
=\underset{\epsilon\rightarrow 0}{\lim}\left(\int _{\Omega_{\epsilon}} X_{j}\varepsilon(y,x)d\nu(y) - \int _{\Omega_{\epsilon}}X_{j}\varepsilon(y,x_{0}) d\nu(y)\right)
\\
=\underset{\epsilon\rightarrow 0}{\lim}(C\int _{0}^{\epsilon}r^{1-Q}r^{Q-1}dr)=C\,\underset{\epsilon\rightarrow 0}{\lim}\epsilon=0.
\end{multline*}
This completes the proof.
\end{proof}

We will prove the following lemma to prepare for establishing analogues of the Plemelj jump relations for the double layer potential $\mathcal{D}$ defined in \eqref{EQ:dp}.

\begin{lem}\label{claim2}
Let $u \in C^{1}(\Omega)\bigcap C(\overline{\Omega})$, $\Omega\subset\mathbb{G}$, be an admissible domain with the boundary $\partial\Omega$ and $x_{0}\in\partial\Omega$. Then
\begin{multline*}
\underset{x\rightarrow x_{0}}{\lim}\int_{\partial \Omega} [u(y) - u(x)]\langle \widetilde{\nabla}  \varepsilon(y,x),d\nu(y)\rangle \\
=\int_{\partial \Omega}[u(y)-u(x_{0})] \langle \widetilde{\nabla} \varepsilon(y,x_{0}),d\nu(y)\rangle,\quad x_{0}\in\partial\Omega,
\end{multline*}
where $\widetilde{\nabla}  \varepsilon=\sum_{k=1}^{N_{1}}\left(X_{k}\varepsilon\right)X_{k}.$
\end{lem}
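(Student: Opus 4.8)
The plan is to split the boundary integral into a part away from the singularity $x_0$ and a part close to it, and to control each separately. Fix a small $\epsilon>0$ and write $\partial\Omega_\epsilon:=\{y\in\partial\Omega:\ d(x_0,y)<\epsilon\}$ and $\partial\Omega^\epsilon:=\partial\Omega\setminus\partial\Omega_\epsilon$. On the far piece $\partial\Omega^\epsilon$, the kernel $\langle\widetilde\nabla\varepsilon(y,x),d\nu(y)\rangle$ depends continuously on $x$ (uniformly for $x$ near $x_0$, since $d(x,y)$ stays bounded below), and $u$ is continuous on $\overline\Omega$; hence for each fixed $\epsilon$,
\[
\int_{\partial\Omega^\epsilon}[u(y)-u(x)]\langle\widetilde\nabla\varepsilon(y,x),d\nu(y)\rangle
\longrightarrow
\int_{\partial\Omega^\epsilon}[u(y)-u(x_0)]\langle\widetilde\nabla\varepsilon(y,x_0),d\nu(y)\rangle
\]
as $x\to x_0$, by dominated convergence. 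So the statement reduces to showing that the near piece is uniformly small, i.e. that
\[
\sup_{x}\ \Bigl|\int_{\partial\Omega_\epsilon}[u(y)-u(x)]\langle\widetilde\nabla\varepsilon(y,x),d\nu(y)\rangle\Bigr|\to 0
\]
as $\epsilon\to 0$, with the supremum taken over $x$ in a fixed neighbourhood of $x_0$ (including $x=x_0$), and similarly that the $x_0$-integral over $\partial\Omega_\epsilon$ tends to $0$.

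For the near piece I would use the regularity of $u$: since $u\in C^1(\Omega)\cap C(\overline\Omega)$, and more to the point since the combination $u(y)-u(x)$ vanishes as $y\to x$, we gain a factor that compensates the singularity of $\widetilde\nabla\varepsilon$. Concretely, $\widetilde\nabla\varepsilon(y,x)$, being the horizontal gradient of a function homogeneous of degree $2-Q$, behaves like $d(x,y)^{1-Q}$ near the diagonal, while $\langle X_j,d\nu\rangle$ restricted to $\partial\Omega$ is (by Proposition~\ref{stokes}, i.e. formula~\eqref{stokesone}) the boundary trace of an exact form, so the surface integral of $d(x,y)^{1-Q}$ over $\partial\Omega_\epsilon$ is estimated, exactly as in Lemma~\ref{kl4} and in the proof of Theorem~\ref{singlelayer}, by converting to a volume integral over a $\delta_\lambda$-ball and computing $\int_0^\epsilon r^{1-Q}\,r^{Q-1}dr=\epsilon$. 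Combined with the modulus-of-continuity bound $|u(y)-u(x)|\le\omega(d(x,y)+d(x,x_0))$ coming from uniform continuity of $u$ on $\overline\Omega$ (for $x$ near $x_0$ one has $d(x,y)\lesssim\epsilon$ on $\partial\Omega_\epsilon$ by the quasi-triangle inequality), this gives a bound of the form $C\,\omega(C'\epsilon)\to 0$, uniformly in $x$. The $x=x_0$ case is the same estimate. This handles the near piece.

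The main obstacle is the second step made rigorous: one must justify the reduction of the boundary integral of $|\widetilde\nabla\varepsilon|$ to a volume integral over $\Omega_\epsilon:=\{y\in\Omega:d(x_0,y)<\epsilon\}$ near a possibly \emph{characteristic} point $x_0\in\partial\Omega$, where the boundary is not transverse to the horizontal fields and the naive surface-measure estimate fails. The point — and the reason the paper uses the form $\langle X_j,d\nu\rangle$ rather than $dS$ — is that $\sum_k f_k\langle X_k,d\nu\rangle$ is $d$ of the form $\langle\sum_k f_kX_k,d\nu\rangle$, so Stokes' theorem lets us replace $\int_{\partial\Omega_\epsilon}u(y)\langle\widetilde\nabla\varepsilon(y,x),d\nu(y)\rangle$, up to an integral over the ``cap'' $\partial(B_\epsilon(x_0))\cap\Omega$ which is controlled by the same $\epsilon^{1-Q}\cdot\epsilon^{Q-1}$ scaling, by $\int_{\Omega_\epsilon}\widetilde\nabla\varepsilon\cdot\text{(gradient terms)}\,d\nu$, an honestly convergent volume integral (recall $\mathcal L\varepsilon=0$ away from the pole, so the "divergence" of $\widetilde\nabla\varepsilon$ drops out and only the $X_k u$ term survives). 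Care is needed because $x$ may lie on either side of $\partial\Omega$, or on it, so one tracks the jump contribution separately; but its size is again $O(\epsilon)$ by homogeneity. Assembling the far-piece convergence with the uniform $O(\omega(\epsilon))$ smallness of the near piece, and letting first $x\to x_0$ and then $\epsilon\to0$, yields the claimed limit.
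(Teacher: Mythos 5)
Your proposal is correct in outline and rests on the same key mechanism as the paper's proof: the divergence formula of Proposition \ref{stokes} is used to trade the non-absolutely-controllable boundary integral of $\widetilde{\nabla}\varepsilon\sim d(x,y)^{1-Q}$ for a volume integral where that singularity is integrable, combined with the facts that $\mathcal{L}\varepsilon=0$ away from the pole and that the factor $u(y)-u(x)$ vanishes at $y=x$. The difference is in the decomposition. You split the \emph{boundary} first into $\partial\Omega_{\epsilon}$ and its complement, treat the far piece by dominated convergence directly on $\partial\Omega$, and apply Stokes only locally on $\Omega\cap B_{\epsilon}(x_{0})$; this forces you to introduce and estimate the extra ``cap'' integral over $\partial B_{\epsilon}(x_{0})\cap\Omega$, which in turn needs the homogeneity bound $\int_{\partial B_{\epsilon}}|\langle X_{k},d\nu\rangle|\lesssim \epsilon^{Q-1}$ for gauge spheres. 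The paper instead converts the \emph{entire} boundary integral into $\int_{\Omega}X_{k}\{[u(y)-u(x)]X_{k}\varepsilon(y,x)\}\,d\nu(y)$ by one application of \eqref{EQ:S1}, splits the resulting volume integral into $\Omega\setminus\Omega_{\epsilon}$ and $\Omega_{\epsilon}$ (with $\Omega_{\epsilon}$ a gauge ball centred at $x$, not at $x_{0}$), shows the $\Omega_{\epsilon}$-part is $O(\epsilon)$ uniformly, passes to the limit $x\to x_{0}$ in the regular part, and converts back --- so no cap term ever appears. Your route localises the difficulty at $x_{0}$ more transparently; the paper's route is shorter because the global conversion removes the boundary geometry (and the characteristic points) from the picture in a single step. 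One small correction: the ``jump contribution'' you propose to track when $x$ crosses $\partial\Omega$ is not merely $O(\epsilon)$ by homogeneity --- it is absent altogether, precisely because the integrand carries the factor $u(y)-u(x)$, which annihilates the $\delta_{x}$ arising from $\mathcal{L}_{y}\varepsilon(y,x)$; this cancellation is the reason the lemma is stated for $u(y)-u(x)$ rather than $u(y)$, and it is exactly what the paper's preliminary step $\lim_{\epsilon\to 0}\int_{d(x,y)<\epsilon}X_{k}\{(u(y)-u(x))X_{k}\varepsilon(y,x)\}\,d\nu(y)=0$ makes precise.
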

\begin{proof}[Proof of Lemma \ref{claim2}.]
In this proof we use the Einstein type notation, that is, if the index $k$ is repeated in an integrant,
then it means that we have a sum from $1$ to $N_{1}$ over $k$ (and both indices can enter as subscripts). For example,
$$\int_{ \partial \Omega} [u(y) - u(x)]  X_{k}\varepsilon(y,x) \langle X_{k},d\nu(y)  \rangle :=\sum_{k=1}^{N_{1}}\int_{ \partial \Omega} [u(y) - u(x)]  X_{k}\varepsilon(y,x) \langle X_{k},d\nu(y)  \rangle.$$
First, let us show that
$$
\underset{\epsilon\rightarrow 0}{\lim} \; \int_{d(x,y)<\epsilon}  X_{k}\left\{(u(y) - u(x)) X_{k}\varepsilon(y,x)\right\} d\nu(y)=0.
$$
By using Proposition \ref{stokes}
$$
\underset{\epsilon\rightarrow 0}{\lim} \;| \int_{d(x,y)<\epsilon}  X_{k}\left\{(u(y) - u(x)) X_{k}\varepsilon(y,x)\right\} d\nu(y)|
$$
$$
\leq C_{1} \,\underset{\epsilon\rightarrow 0}{\lim} \int_{d(x,y)<\epsilon} |X_{k}\varepsilon(y,x)| d\nu(y)
$$
$$
+\,\underset{\epsilon\rightarrow 0}{\lim} \int_{d(x,y)<\epsilon}  |(u(y) - u(x)) X_{k}X_{k}\varepsilon(y,x)| d\nu(y)
$$
$$\leq C\,\underset{\epsilon\rightarrow 0}{\lim} \int_{0}^{\epsilon}dr=0.$$
Therefore, we have
$$\int_{  \Omega} X_{k}\left\{ [u(y) - u(x)]  X_{k}\varepsilon(y,x)\right\}d\nu(y) $$
$$=\int_{  \Omega} X_{k}\left\{ [u(y) - u(x)]  X_{k}\varepsilon(y,x)\right\}d\nu(y)$$
$$+\, \underset{\epsilon\rightarrow 0}{\lim} \; \int_{d(x,y)<\epsilon}  X_{k}\left\{(u(y) - u(x)) X_{k}\varepsilon(y,x)\right\} d\nu(y).$$
If we take $\Omega_{\epsilon} = \{ y\in\mathbb G:d(x,y) < \epsilon  \}$, then
$$ \underset{x\rightarrow x_{0}}{\lim}\int_{ \partial \Omega} [u(y) - u(x)]  X_{k}\varepsilon(y,x) \langle X_{k},d\nu(y)  \rangle   $$
and by the Divergence formula (see Proposition \ref{stokes}) the above expression is
$$ =  \underset{x\rightarrow x_{0}}{\lim}\int_{  \Omega} X_{k}\left\{ [u(y) - u(x)]  X_{k}\varepsilon(y,x)\right\}d\nu(y) $$

$$= \underset{x\rightarrow x_{0}}{\lim} \underset{\epsilon\rightarrow 0}{\lim}
 \bigg\{ \int_{ \Omega\backslash \Omega_{\epsilon}} X_{k} \left\{ [u(y) - u(x)]  X_{k}\varepsilon(y,x) \right\} d\nu(y)
$$
$$+ \int_{  \Omega_{\epsilon}} X_{k}\left\{ [u(y) - u(x)]  X_{k}\varepsilon(y,x)\right\}d\nu(y) \bigg\}$$

$$ = \int_{  \Omega} X_{k} \left\{ [u(y) - u(x_{0})]  X_{k}\varepsilon(y,x_{0})\right\}d\nu(y).$$
That is,
$$\label{e1} \underset{x\rightarrow x_{0}}{\lim}\int_{ \partial \Omega} [u(y) - u(x)]  X_{k}\varepsilon(y,x) \langle X_{k},d\nu(y)\rangle$$
$$=\int_{\Omega} X_{k} \left\{ [u(y) - u(x_{0})]  X_{k}\varepsilon(y,x_{0})\right\}d\nu(y)$$
$$=\int_{\partial \Omega}[u(y)-u(x_{0})] X_{k}\varepsilon(y,x_{0})\langle X_{k},d\nu(y)\rangle.$$
As we agreed this is the same as

$$\underset{x\rightarrow x_{0}}{\lim}\int_{\partial \Omega} [u(y) - u(x)]\langle \widetilde{\nabla}  \varepsilon(y,x),d\nu(y)\rangle$$
$$
=\int_{\partial \Omega}[u(y)-u(x_{0})] \langle \widetilde{\nabla} \varepsilon(y,x_{0}),d\nu(y)\rangle,\quad x_{0}\in\partial\Omega,
$$
where $\widetilde{\nabla}  \varepsilon=\sum_{k=1}^{N_{1}}\left(X_{k}\varepsilon\right)X_{k}.$
\end{proof}

We now prove the Plemelj type jump relations for the double layer potential $\mathcal{D}$ in \eqref{EQ:dp}.

\begin{thm}\label{doublelayer}
Let $\Omega\subset\mathbb{G}$ be an admissible domain and let $u \in C^{1}(\Omega)\bigcap C(\overline{\Omega})$. Define

$$
\mathcal{D}^{0}u(x_{0}):=
\int_{\partial \Omega} u(y)\langle \widetilde{\nabla}  \varepsilon(y,x_{0}),d\nu(y)\rangle,
$$
$$
\mathcal{D}^{+}u(x_{0}):= \underset{x\rightarrow x_{0},\,x \in \Omega}{\lim} \int_{\partial \Omega} u(y )\langle \widetilde{\nabla}  \varepsilon(y,x),d\nu(y) \rangle
$$
and
$$
\mathcal{D}^{-}u(x_{0}):= \underset{x\rightarrow x_{0},\, x \notin \overline{\Omega}}{\lim} \int_{\partial \Omega} u(y )\langle \widetilde{\nabla}  \varepsilon(y,x),d\nu(y)\rangle,
$$
for $x_{0} \in\partial \Omega$.
Then $\mathcal{D}^{+}u(x_{0}), \mathcal{D}^{-}u(x_{0})$ and $\mathcal{D}^{0}u(x_{0})$ exist and verify the following jump relations:
$$
\mathcal{D}^{+}u(x_{0}) - \mathcal{D}^{-}u(x_{0}) = u(x_{0}),
$$
$$
\mathcal{D}^{0}u(x_{0}) - \mathcal{D}^{-}u(x_{0}) = \mathcal{J}(x_{0})u(x_{0}),
$$
$$
\mathcal{D}^{+}u(x_{0}) - \mathcal{D}^{0}u(x_{0}) = (1 - \mathcal{J}(x_{0}))u(x_{0}),
$$
where the jump value $\mathcal{J}(x_{0})$ is given by the formula
$$
\mathcal{J}(x_{0})  =  \int_{\partial \Omega} \langle \widetilde{\nabla}  \varepsilon(y,x_{0}),d\nu(y)\rangle,\quad x_{0} \in\partial \Omega,
$$
in the sense of the (Cauchy) principal value
and $\widetilde{\nabla}  \varepsilon=\sum_{k=1}^{N_{1}}\left(X_{k}\varepsilon\right)X_{k}.$
\end{thm}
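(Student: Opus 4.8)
The plan is to reduce everything to the already-established Corollary~\ref{COR:v1} together with Lemma~\ref{claim2}. The key identity is Corollary~\ref{COR:v1}, which says $\int_{\partial\Omega}\langle\widetilde{\nabla}\varepsilon(x,y),d\nu(y)\rangle=1$ for $x\in\Omega$; by a completely analogous computation (applying the divergence formula, Proposition~\ref{stokes}, to $f_k=X_k\varepsilon(\cdot,x)$ on $\Omega$, where now the singularity at $x$ lies outside $\Omega$) one gets $\int_{\partial\Omega}\langle\widetilde{\nabla}\varepsilon(y,x),d\nu(y)\rangle=0$ for $x\notin\overline{\Omega}$. So first I would record these two facts: the ``inner'' integral of the double layer kernel over $\partial\Omega$ equals $1$ when the pole is inside and $0$ when the pole is outside, while for $x_0\in\partial\Omega$ the principal-value integral $\mathcal{J}(x_0)$ is by definition the intermediate value. (That $\mathcal{J}(x_0)$ exists as a principal value will follow from the splitting below, since the difference between the full integral and a symmetrized piece near $x_0$ is handled by Lemma~\ref{kl4}-type bounds.)

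The core of the argument is the decomposition, for $x$ near $x_0$,
$$
\int_{\partial\Omega}u(y)\langle\widetilde{\nabla}\varepsilon(y,x),d\nu(y)\rangle
=\int_{\partial\Omega}[u(y)-u(x)]\langle\widetilde{\nabla}\varepsilon(y,x),d\nu(y)\rangle
+u(x)\int_{\partial\Omega}\langle\widetilde{\nabla}\varepsilon(y,x),d\nu(y)\rangle.
$$
By Lemma~\ref{claim2} the first term on the right converges, as $x\to x_0$ from \emph{either} side (the proof of Lemma~\ref{claim2} never used on which side $x$ lies), to $\int_{\partial\Omega}[u(y)-u(x_0)]\langle\widetilde{\nabla}\varepsilon(y,x_0),d\nu(y)\rangle$, which I will call $I(x_0)$. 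For the second term: if $x\to x_0$ with $x\in\Omega$, the integral equals $1$ and $u(x)\to u(x_0)$ by continuity of $u$ on $\overline{\Omega}$, giving limit $u(x_0)$; if $x\to x_0$ with $x\notin\overline{\Omega}$, the integral equals $0$, giving limit $0$. Hence $\mathcal{D}^{+}u(x_0)=I(x_0)+u(x_0)$ and $\mathcal{D}^{-}u(x_0)=I(x_0)$, so both limits exist. Subtracting gives $\mathcal{D}^{+}u(x_0)-\mathcal{D}^{-}u(x_0)=u(x_0)$.

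It remains to handle $\mathcal{D}^0$. Writing $\mathcal{D}^0u(x_0)=\int_{\partial\Omega}[u(y)-u(x_0)]\langle\widetilde{\nabla}\varepsilon(y,x_0),d\nu(y)\rangle+u(x_0)\int_{\partial\Omega}\langle\widetilde{\nabla}\varepsilon(y,x_0),d\nu(y)\rangle=I(x_0)+\mathcal{J}(x_0)u(x_0)$ — the first integral here converging absolutely exactly as in Lemma~\ref{kl4} because the factor $u(y)-u(x_0)$ kills one power of the singularity for $u\in C^1$ — we get $\mathcal{D}^0u(x_0)-\mathcal{D}^{-}u(x_0)=\mathcal{J}(x_0)u(x_0)$ and $\mathcal{D}^{+}u(x_0)-\mathcal{D}^0u(x_0)=(1-\mathcal{J}(x_0))u(x_0)$, which are the remaining two jump relations. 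The main obstacle is the rigorous justification that $I(x_0)$ converges and that Lemma~\ref{claim2} applies uniformly from both sides near a characteristic point $x_0\in\partial\Omega$ — i.e. that the singular boundary integrals are controlled near characteristic points; this is precisely where the choice of the measure $\langle X_k,d\nu\rangle$ (rather than surface measure) pays off, via the ball-comparison estimate used in Lemma~\ref{kl4}, and one should double-check that the convergence in Lemma~\ref{claim2} is stated in a form that does not secretly assume $x\in\Omega$.
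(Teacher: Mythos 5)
Your proposal is correct and follows essentially the same route as the paper: the same decomposition $\int_{\partial\Omega}u(y)\langle\widetilde{\nabla}\varepsilon(y,x),d\nu(y)\rangle=\int_{\partial\Omega}[u(y)-u(x)]\langle\widetilde{\nabla}\varepsilon(y,x),d\nu(y)\rangle+u(x)\int_{\partial\Omega}\langle\widetilde{\nabla}\varepsilon(y,x),d\nu(y)\rangle$, the same evaluation of the second factor ($1$ inside, $0$ outside, obtained in the paper by taking $u=\varepsilon$, $v=1$ in Green's first formula, which is exactly your Corollary~\ref{COR:v1} argument), and the same appeal to Lemma~\ref{claim2} for the first term, yielding $\mathcal{D}^{\pm}$ and $\mathcal{D}^{0}$ in the identical forms from which the three jump relations follow by subtraction. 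Your closing caveats about two-sided applicability of Lemma~\ref{claim2} and convergence near characteristic points are sensible but do not change the argument, which matches the paper's.
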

\begin{proof}[Proof of Theorem \ref{doublelayer}.]
We have
\begin{multline*}
\underset{x\rightarrow x_{0},\,x \notin \partial\Omega}{\lim} \int_{\partial \Omega} u(y )\langle \widetilde{\nabla}  \varepsilon(y,x),d\nu(y)  \rangle
= \\
\underset{x\rightarrow x_{0},\,x \notin \partial\Omega}{\lim} (\int_{\partial \Omega} [u(y) - u(x) ]\langle \widetilde{\nabla}  \varepsilon(y,x),d\nu(y)\rangle
+ u(x )\int_{\partial \Omega} \langle \widetilde{\nabla}  \varepsilon(y,x),d\nu(y) \rangle ).
\end{multline*}
Taking $u=\varepsilon$ and $v=1$ in the Green's first formula \eqref{g1} we get
$$
\int_{\partial \Omega} \langle \widetilde{\nabla}  \varepsilon(y,x),d\nu(y)  \rangle  =
\bigg\{\begin{matrix}
1, &x \in \Omega, \\ 0,
  & x \notin \bar\Omega.
\end{matrix}
$$
Therefore, using Lemma \ref{claim2} we obtain
\begin{equation}\label{{D}^{+}}
\mathcal{D}^{+}u(x_{0}) = \int_{\partial \Omega} [u(y) - u(x_{0})]\langle \widetilde{\nabla}  \varepsilon(y,x_{0}),d\nu(y)\rangle +u(x_{0}),
\end{equation}

\begin{equation}\label{{D}^{-}}
\mathcal{D}^{-}u(x_{0}) = \int_{\partial \Omega} [u(y) - u(x_{0})]\langle \widetilde{\nabla}  \varepsilon(y,x_{0}),d\nu(y)\rangle.
\end{equation}
From here we obtain the first jump relation, i.e.
$$\mathcal{D}^{+}u(x_{0})-\mathcal{D}^{-}u(x_{0})=u(x_{0}).$$
We also have
$$
\mathcal{D}^{0}u(x_{0}) = \int_{\partial \Omega} u(y)\langle \widetilde{\nabla}  \varepsilon(y,x_{0}),d\nu(y) \rangle
$$
$$
= \int_{\partial \Omega } [u(y) - u(x_{0})] \langle \widetilde{\nabla}  \varepsilon(y,x_{0}),d\nu(y)  \rangle
$$
$$
+u(x_{0})\,\int_{\partial \Omega }  \langle \widetilde{\nabla}  \varepsilon(y,x_{0}),d\nu(y) \rangle
$$
$$
= \int_{\partial \Omega}[u(y) - u(x_{0})]\left \langle \widetilde{\nabla}  \varepsilon(y,x_{0}),d\nu(y) \right \rangle + \mathcal{J}(x_{0})u(x_{0}).
$$
So we obtain
\begin{equation}\label{{D}^{0}}
\mathcal{D}^{0}u(x_{0})=\int_{\partial \Omega}[u(y) - u(x_{0})]
\left \langle \widetilde{\nabla}  \varepsilon(y,x_{0}),d\nu(y) \right \rangle + \mathcal{J}(x_{0})u(x_{0}).
\end{equation}
Now subtracting \eqref{{D}^{-}} from \eqref{{D}^{0}} we get the second jump relation and
subtracting \eqref{{D}^{0}} from \eqref{{D}^{+}} we obtain the third one.
\end{proof}

\section{Traces and Kac's problem for the sub-Laplacian}
\label{SEC:5}

For $0<\alpha<1$, Folland and Stein
(see \cite{FS} and see also \cite{Fol75}) defined the anisotropic H\"older spaces $\Gamma_\alpha(\Omega),$ $\Omega\subset\mathbb{G},$
by
$$
\Gamma_\alpha(\Omega)=\{
f:\Omega\to\mathbb C:\; \sup_{\stackrel{x,y\in \Omega}{x\not= y}}
\frac{|f(x)-f(y)|}{[d(x,y)]^\alpha}<\infty
\}.
$$
For $k\in\mathbb N$ and $0<\alpha<1$, one defines
$\Gamma_{k+\alpha}(\Omega)$ as the space of all $f:\Omega\to\mathbb C$ such that all derivatives of $f$ of order $k$ belong to $\Gamma_\alpha(\Omega)$.
A bounded function $f$ is called  $\alpha$-H\"{o}lder continuous
in $\Omega\subset\mathbb{G}$ if $f\in\Gamma_{\alpha}(\Omega)$.

Let $\Omega\subset \mathbb{G}$ be an admissible domain.
Consider the following analogy of the Newton potential
\begin{equation}
u(x)=\int_{\Omega}f(y)
\varepsilon(y,x)d\nu(y),\quad x\in\Omega,\quad f\in \Gamma_\alpha(\Omega),
\label{6}
\end{equation}
where $$\varepsilon(y,x)=\varepsilon(x,y)=\varepsilon(y^{-1}x,0)=\varepsilon(y^{-1}x)$$ is the
fundamental solution \eqref{fundsol} of the sub-Laplacian $\mathcal{L}$, i.e.
$$\varepsilon(x,y)=[d(x,y)]^{2-Q},$$ and
$u$ is a solution of $$\mathcal{L} u= f$$ in $\Omega$. The aim of this section is to find a boundary condition
 for $u$ such that with this boundary condition the equation $\mathcal{L} u= f$ has a unique solution in $C^{2}(\Omega)$, say, and this solution is the Newton potential \eqref{6}.
 This amounts to finding the trace of the integral operator in \eqref{6} on $\partial\Omega$.

\medskip

A starting point for us will be that if $f\in \Gamma_\alpha(\Omega)$ for  $\alpha>0$ then
$u$ defined by \eqref{6} is twice differentiable and satisfies the
equation $\mathcal{L} u= f$. We refer to Folland \cite{Fol75} for this property.
These results extend those known for the Laplacian, in suitably redefined anisotropic H\"older spaces.

Our main result for the sub-Laplacian is the following variant of formula \eqref{16} in the introduction, now in the setting of Carnot groups.

\begin{thm} \label{THM:main}
Let $\varepsilon(y,x)=\varepsilon(y^{-1}x)$ be the  fundamental solution to
$\mathcal{L}$, so that
\begin{equation}\label{EQ:def-eps}
\mathcal{L}\varepsilon=\delta\quad \textrm{ on } \mathbb{G}.
\end{equation}
Let $\Omega\subset\mathbb{G}$ be an admissible domain. For any $f\in \Gamma_\alpha(\Omega)$, $0<\alpha<1$, $\textrm{supp}  f \subset \Omega$,
the Newton potential \eqref{6} is the unique solution
in $C^{2}(\Omega)\cap C^1(\overline{\Omega})$
of the equation
\begin{equation}
\mathcal{L} u= f\,\,\,\textrm{ in } \Omega,
\label{EQ:BV1}
\end{equation}
with the boundary condition
\begin{multline}\label{7}
(1-\mathcal{J}(x))u(x)+\int_{\partial \Omega} u(y)\langle \widetilde{\nabla} \varepsilon(y,x),d\nu(y)\rangle
\\
-\int_{\partial \Omega} \varepsilon(y,x) \langle \widetilde{\nabla}  u(y), d\nu(y)\rangle =0,
\qquad \textrm{for}\quad x\in\partial\Omega,
\end{multline}
where the jump value is given by the formula
\begin{equation}\label{EQ:HR}
\mathcal{J}(x)=\int_{\partial \Omega}
\langle \widetilde{\nabla} \varepsilon(y,x),d\nu(y)\rangle,
\end{equation}
with $\widetilde{\nabla}=\widetilde{\nabla}_{y}$ defined by
$$\widetilde{\nabla}  g=\sum_{k=1}^{N_{1}} (X_{k}g) X_{k}.$$
\end{thm}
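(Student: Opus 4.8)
The plan is to establish the statement in two independent parts: (i) the Newton potential $u$ of \eqref{6} really is a solution of \eqref{EQ:BV1} with the boundary condition \eqref{7} in the class $C^{2}(\Omega)\cap C^{1}(\overline{\Omega})$, and (ii) the boundary value problem \eqref{EQ:BV1}--\eqref{7} admits at most one such solution. Before either part, I would record the regularity of $u$: we have $\mathcal{L}u=f$ in $\Omega$ and $u$ is twice differentiable by Folland's result quoted above, while the hypothesis $\text{supp}\,f\subset\Omega$ implies that $u$ is $\mathcal{L}$-harmonic in a neighbourhood of $\partial\Omega$ inside $\overline{\Omega}$, hence smooth there by hypoellipticity of $\mathcal{L}$. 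Thus $u\in C^{2}(\Omega)\cap C^{1}(\overline{\Omega})$, so Corollary \ref{repn}, Theorem \ref{singlelayer} and Theorem \ref{doublelayer} all apply to $u$.

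For part (i) I would apply the representation formula of Corollary \ref{repn} to $u$. Since $\mathcal{L}u=f$ and $\text{supp}\,f\subset\Omega$, the volume term $\int_{\Omega}\varepsilon(x,y)\mathcal{L}u(y)\,d\nu(y)$ equals $\int_{\Omega}\varepsilon(x,y)f(y)\,d\nu(y)=u(x)$ for every $x\in\Omega$, so the identity collapses to $\int_{\partial\Omega}u(y)\langle\widetilde{\nabla}\varepsilon(x,y),d\nu(y)\rangle=\int_{\partial\Omega}\varepsilon(x,y)\langle\widetilde{\nabla}u(y),d\nu(y)\rangle$ for all $x\in\Omega$. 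Now let $x\to x_{0}\in\partial\Omega$ with $x\in\Omega$. The right-hand side equals $\sum_{k=1}^{N_{1}}\mathcal{S}_{k}(X_{k}u)(x)$, and since $u\in C^{1}(\overline{\Omega})$ each $X_{k}u$ lies in $L^{\infty}(\partial\Omega)$, so by Theorem \ref{singlelayer} it is continuous and converges to $\int_{\partial\Omega}\varepsilon(y,x_{0})\langle\widetilde{\nabla}u(y),d\nu(y)\rangle$; the left-hand side converges to $\mathcal{D}^{+}u(x_{0})$, which by the third jump relation of Theorem \ref{doublelayer} equals $\mathcal{D}^{0}u(x_{0})+(1-\mathcal{J}(x_{0}))u(x_{0})$. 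Equating the two limits and recalling $\mathcal{D}^{0}u(x_{0})=\int_{\partial\Omega}u(y)\langle\widetilde{\nabla}\varepsilon(y,x_{0}),d\nu(y)\rangle$ gives exactly \eqref{7}.

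For part (ii), suppose $u_{1},u_{2}$ are two solutions and set $w=u_{1}-u_{2}\in C^{2}(\Omega)\cap C^{1}(\overline{\Omega})$; then $\mathcal{L}w=0$ in $\Omega$, and because \eqref{7} is linear and homogeneous in $u$, the function $w$ satisfies $(1-\mathcal{J}(x))w(x)+\int_{\partial\Omega}w(y)\langle\widetilde{\nabla}\varepsilon(y,x),d\nu(y)\rangle-\int_{\partial\Omega}\varepsilon(y,x)\langle\widetilde{\nabla}w(y),d\nu(y)\rangle=0$ on $\partial\Omega$. Applying Corollary \ref{repnhar} to $w$ and passing to the limit $x\to x_{0}\in\partial\Omega$ from inside $\Omega$ exactly as in part (i) (Theorem \ref{singlelayer} for the single-layer term, the third jump relation of Theorem \ref{doublelayer} for the double-layer term), one gets $w(x_{0})=\mathcal{D}^{0}w(x_{0})+(1-\mathcal{J}(x_{0}))w(x_{0})-S(x_{0})$, where $S(x_{0})=\int_{\partial\Omega}\varepsilon(y,x_{0})\langle\widetilde{\nabla}w(y),d\nu(y)\rangle$; that is, $\mathcal{J}(x_{0})w(x_{0})=\mathcal{D}^{0}w(x_{0})-S(x_{0})$. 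The homogeneous boundary condition for $w$ reads $\mathcal{D}^{0}w(x_{0})-S(x_{0})=(\mathcal{J}(x_{0})-1)w(x_{0})$, and subtracting the two relations yields $w(x_{0})=0$ for all $x_{0}\in\partial\Omega$. Hence $w$ solves $\mathcal{L}w=0$ in $\Omega$ with $w=0$ on $\partial\Omega$, so $w\equiv 0$ by Lemma \ref{uniqueness}, proving $u_{1}=u_{2}$.

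The only genuinely delicate points are the boundary regularity of the Newton potential, which is why the hypothesis $\text{supp}\,f\subset\Omega$ is used (it lets hypoellipticity give $u\in C^{1}(\overline{\Omega})$), and the careful bookkeeping between $\mathcal{D}^{+}$, $\mathcal{D}^{0}$, $\mathcal{J}$ and the continuous single-layer term when taking the limit $x\to x_{0}$. All the real analytic content — convergence of the layer potentials over characteristic points, the Plemelj-type jump relations, and uniqueness for the Dirichlet problem — is already contained in the results quoted above, so this proof is essentially an assembly of them.
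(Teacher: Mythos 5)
Your proposal is correct and follows essentially the same route as the paper's own proof: the representation formula of Corollary \ref{repn} collapses to the cancellation of the two boundary integrals for $x\in\Omega$, the boundary condition \eqref{7} is then obtained by letting $x\to\partial\Omega$ via the continuity of the single layer potential (Theorem \ref{singlelayer}) and the jump relations (Theorem \ref{doublelayer}), and uniqueness is reduced to the Dirichlet problem of Lemma \ref{uniqueness} by comparing the limiting representation of $w$ with the homogeneous boundary condition. Your additional remark that $\mathrm{supp}\,f\subset\Omega$ together with hypoellipticity yields the $C^{1}(\overline{\Omega})$ regularity near the boundary is a welcome (and slightly more explicit) justification of a step the paper passes over quickly.
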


\begin{proof}[Proof of Theorem \ref{THM:main}]
Since the Newton potential
\begin{equation}\label{EQ:solidp}
u(x)=\int_{\Omega}f(y)\varepsilon(y,x)d\nu(y)
\end{equation}
is a solution of \eqref{EQ:BV1}, from the aforementioned results of Folland it follows that
$u$ is locally in $\Gamma_{\alpha+2}(\Omega,loc)$ and that it is twice differentiable in
$\Omega$. In particular, it follows that $u\in C^{2}(\Omega)\cap C^1(\overline{\Omega})$.

By Corollary \ref{repn} we have the following representation formula
\begin{multline}\label{8}
u(x)=\int_{\Omega}f(y)\varepsilon(y,x)d\nu(y)
+\int_{\partial \Omega} u(y)\langle \widetilde{\nabla} \varepsilon(y,x),d\nu(y)\rangle
\\
-\int_{\partial \Omega} \varepsilon(y,x) \langle\widetilde{\nabla}  u(y),d\nu(y)\rangle,
\quad \forall x\in\Omega,
\end{multline}
for $u\in C^{2}(\Omega)\cap C^1(\overline{\Omega})$.

Since $u(x)$ given by \eqref{EQ:solidp}
is a solution of \eqref{EQ:BV1}, using it in  \eqref{8} we get
\begin{equation}\label{EQ:aux1}
\int_{\partial \Omega} u(y)\langle \widetilde{\nabla} \varepsilon(y,x),d\nu(y)\rangle-\int_{\partial \Omega} \varepsilon(y,x) \langle\widetilde{\nabla}  u(y),d\nu(y)\rangle=0, \quad
\textrm{for any}\,\,x\in\Omega.
\end{equation}
By using Theorem \ref{singlelayer} and Theorem \ref{doublelayer} as $x\in\Omega$ approaches the boundary
$\partial\Omega$ from inside, we find that
\begin{multline}\label{EQ:BC}
(1-\mathcal{J}(x))u(x)+\int_{\partial \Omega} u(y)\langle \widetilde{\nabla} \varepsilon(y,x),d\nu(y)\rangle
\\
-\int_{\partial \Omega} \varepsilon(y,x) \langle\widetilde{\nabla}  u(y),d\nu(y)\rangle=0,
\quad \textrm{ for any } x\in\partial\Omega.
\end{multline}

This shows that \eqref{6} is a solution of the boundary value problem \eqref{EQ:BV1} with the boundary condition
\eqref{7}.

\medskip
Now let us prove its uniqueness.
If the boundary value problem has two solutions $u$ and $u_{1}$
then the function
$$w=u-u_{1}\in C^{2}(\Omega)\cap C^1(\overline{\Omega})$$
 satisfies the homogeneous equation
\begin{equation}
\mathcal{L}w=0\,\,\,\textrm{ in } \Omega, \label{9}
\end{equation}
and the boundary condition \eqref{7}, i.e.
\begin{multline}
(1-\mathcal{J}(x))w(x)+\int_{\partial \Omega} w(y)\langle \widetilde{\nabla} \varepsilon(y,x),d\nu(y)\rangle\label{10} \\
-\int_{\partial \Omega} \varepsilon(y,x) \langle\widetilde{\nabla}  w(y),d\nu(y)\rangle=0,\quad x\in\partial\Omega.
\end{multline}
Since $f\equiv0$ in this case instead of \eqref{8} we have the following representation formula (see Corollary \ref{repnhar})
\begin{equation}
w(x)=\int_{\partial \Omega} w(y)\langle \widetilde{\nabla} \varepsilon(y,x),d\nu(y)\rangle-
\int_{\partial \Omega} \varepsilon(y,x) \langle\widetilde{\nabla}  w(y),d\nu(y)\rangle, \label{11}\end{equation}
for any $x\in\Omega$.
As above, by using the properties of the double and single layer potentials as $x\rightarrow \partial\Omega$ from interior, from \eqref{11} we obtain
\begin{multline}
w(x)=(1-\mathcal{J}(x))w(x)
\label{12}\\
+\int_{\partial \Omega } w(y)\langle \widetilde{\nabla} \varepsilon(y,x),d\nu(y)\rangle-
\int_{\partial \Omega} \varepsilon(y,x) \langle\widetilde{\nabla}  w,d\nu(y)\rangle,
\end{multline}
for any $x\in\partial\Omega.$
Comparing this with \eqref{10} we arrive at
\begin{equation}
w(x)=0,  \,\, x\in\partial\Omega.\label{13}
\end{equation}

The homogeneous equation \eqref{9} with the Dirichlet boundary condition \eqref{13} has only trivial solution
$w\equiv 0$ in $\Omega$, see Lemma \ref{uniqueness}.
This shows that the boundary value problem \eqref{EQ:BV1} with the boundary condition
\eqref{7} has a unique solution in $C^{2}(\Omega)\cap C^1(\overline{\Omega})$.
This completes the proof of Theorem \ref{THM:main}.
\end{proof}

\begin{rem}
It follows from Theorem \ref{THM:main} that the kernel $\varepsilon(y,x)=\varepsilon(y^{-1}x)$, which is a
fundamental solution of the sub-Laplacian, is the Green function of the boundary value problem \eqref{EQ:BV1}, \eqref{7} in $\Omega$. Therefore, the
boundary value problem \eqref{EQ:BV1}, \eqref{7} can serve as an example of an explicitly solvable boundary value problem for the sub-Laplacian in any (admissible)
domain $\Omega$ on the homogeneous Carnot  group.
\end{rem}

\section{Powers of the sub-Laplacian}
\label{SEC:6}

As before, let $\Omega \subset \mathbb{G}$ be
an admissible domain.
For $m\in\mathbb N$, we denote $\mathcal{L}^{m}:=\mathcal{L}\mathcal{L}^{m-1}$.
Then for $m=1,2,\ldots$, we consider
the equation
\begin{equation}
\mathcal{L}^{m}u(x)=f(x), \,\,x\in\Omega,
\label{17}
\end{equation}
for a given $f\in \Gamma_{\alpha}(\Omega).$
Let $\varepsilon(y,x)=\varepsilon(y^{-1}x)$ be the
 fundamental solution of the sub-Laplacian as in \eqref{EQ:def-eps}.
Let us now define
\begin{equation}
u(x)=\int_{\Omega}f(y)\varepsilon_{m}(y,x)d\nu(y)
\label{18}
\end{equation}
in $\Omega\subset\mathbb{G}$, where
$\varepsilon_{m}(y,x)$ is a  fundamental solution of \eqref{17} such that
$$
\mathcal{L}^{m-1}\varepsilon_{m}=\varepsilon, \quad m=1,2,....
$$
We take, with a proper distributional interpretation, for $m=2,3,\ldots$,
\begin{equation}\varepsilon_{m}(y,x)=\int_{\Omega}\varepsilon_{m-1}(y,\zeta)\varepsilon(\zeta,x)d\nu(\zeta),\qquad
y,x\in \Omega, \label{19}\end{equation}
with
$$\varepsilon_{1}(y,x)=\varepsilon(y,x).$$

A simple calculation shows that the generalised Newton potential \eqref{18} is a solution of \eqref{17}
in $\Omega$. The aim of this section is to find boundary conditions on $\partial\Omega$ such that with these boundary
conditions the equation \eqref{17} has a unique solution in $C^{2m}(\Omega)$, which coincides with \eqref{18}.

Although higher order hypoelliptic operators on the homogenous Carnot
group may not have unique fundamental solutions, see Geller \cite{geller_83}, in the case of
the iterated sub-Laplacian $\mathcal{L}^{m}$ we still have the uniqueness for our
problem in the sense of the following theorem, and the uniqueness argument in its proof.

\begin{thm} \label{THM:main2}
Let $\Omega\subset\mathbb{G}$ be an admissible domain. For any $f\in \Gamma_{\alpha}(\Omega)$, $0<\alpha<1$, $\textrm{supp}  f \subset \Omega$, the generalised Newton potential \eqref{18} is a unique solution of the equation \eqref{17} in $C^{2m}(\Omega)\cap C^{2m-1}(\overline{\Omega})$
with $m$ boundary conditions
\begin{multline}
(1-\mathcal{J}(x))\mathcal{L}^{i}u(x) +
\sum_{j=0}^{m-i-1}\int_{\partial \Omega }
\mathcal{L}^{j+i}u(y)\langle\widetilde{\nabla} \mathcal{L}^{m-1-j}\varepsilon_{m}(y,x),d\nu(y)\rangle
\\
-\sum_{j=0}^{m-i-1}\int_{\partial\Omega}\mathcal{L}^{m-1-j}
\varepsilon_{m}(y,x)\langle\widetilde{\nabla} \mathcal{L}^{j+i}u(y)d\nu(y)\rangle=0,\quad
x\in \partial\Omega,
\label{20}
\end{multline}
for all $i=0,1,\ldots,m-1,$
where $\widetilde{\nabla}$ is given by
$$\widetilde{\nabla}  g=\sum_{k=1}^{N_{1}} (X_{k}g)X_{k}$$
and $\mathcal{J}(x)$ is the jump function given by the formula \eqref{EQ:HR}.
\end{thm}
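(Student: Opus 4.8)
The strategy mirrors that of Theorem \ref{THM:main}, but now we iterate Green's second formula $m$ times. First I would verify that \eqref{18} is indeed a solution of \eqref{17} lying in $C^{2m}(\Omega)\cap C^{2m-1}(\overline{\Omega})$: by construction $\mathcal{L}^{m-1}\varepsilon_m=\varepsilon$, so $\mathcal{L}^m u = \mathcal{L}(\mathcal{L}^{m-1}u)$ and, since $\mathcal{L}^{m-1}u(x)=\int_\Omega f(y)\varepsilon(y,x)\,d\nu(y)$ is the ordinary Newton potential of $f$, the regularity follows from Folland's results exactly as in Theorem \ref{THM:main} (applied also to the intermediate potentials $\mathcal{L}^i u$, each of which is a Newton-type potential of an $\alpha$-H\"older function with support in $\Omega$, hence lies in a suitable $\Gamma_{\alpha+2}$-class locally).

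Next I would derive the representation formula underlying \eqref{20}. Fix $i\in\{0,\dots,m-1\}$ and apply Green's second formula \eqref{g2} repeatedly with $v(y)=\mathcal{L}^{m-1-j}\varepsilon_m(y,x)$ for $j=0,1,\dots,m-i-1$, using the relations $\mathcal{L}\mathcal{L}^{m-1-j}\varepsilon_m=\mathcal{L}^{m-j}\varepsilon_m$ and the defining identity $\mathcal{L}^{m-1}\varepsilon_m=\varepsilon$, $\mathcal{L}^m\varepsilon_m=\delta$. Telescoping the resulting chain of identities for $\mathcal{L}^{j+i}u$ gives, for $x\in\Omega$,
\begin{multline*}
\mathcal{L}^i u(x)=\int_\Omega f(y)\,\mathcal{L}^{i}\varepsilon_m(y,x)\,d\nu(y)
+\sum_{j=0}^{m-i-1}\int_{\partial\Omega}\mathcal{L}^{j+i}u(y)\langle\widetilde{\nabla}\mathcal{L}^{m-1-j}\varepsilon_m(y,x),d\nu(y)\rangle\\
-\sum_{j=0}^{m-i-1}\int_{\partial\Omega}\mathcal{L}^{m-1-j}\varepsilon_m(y,x)\langle\widetilde{\nabla}\mathcal{L}^{j+i}u(y),d\nu(y)\rangle.
\end{multline*}
When $u$ is the generalised Newton potential \eqref{18}, the solid-integral term on the right equals $\mathcal{L}^i u(x)$ itself (since $\mathcal{L}^{m-i}$ of that term is $f$ and the term is a $\mathcal{L}^{m-i}$-Newton potential), so the two boundary sums cancel for every $x\in\Omega$. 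Then, exactly as in the passage from \eqref{EQ:aux1} to \eqref{EQ:BC}, I let $x\to\partial\Omega$ from inside: Theorem \ref{singlelayer} gives continuity of the single-layer terms $\mathcal{S}_j(\widetilde{\nabla}\mathcal{L}^{j+i}u)$ and of the single layers built from $\mathcal{L}^{m-1-j}\varepsilon_m$, while Theorem \ref{doublelayer} produces the jump $(1-\mathcal{J}(x))\mathcal{L}^i u(x)$ from the double-layer term carrying the full gauge-kernel $\varepsilon=\mathcal{L}^{m-1}\varepsilon_m$ (the terms with $j<m-1$ have kernels homogeneous of degree $>2-Q$, hence are weakly singular and contribute no jump). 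This yields precisely the $m$ boundary conditions \eqref{20}.

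For uniqueness, suppose $u,u_1$ are two solutions and set $w=u-u_1$, so $\mathcal{L}^m w=0$ in $\Omega$ and $w$ satisfies \eqref{20}. The plan is a descending induction on $i$ from $i=m-1$ down to $i=0$ showing $\mathcal{L}^i w\equiv 0$ on $\partial\Omega$ and then in $\Omega$. With $f\equiv0$ the representation formula above (now without the solid term, via Corollary \ref{repnhar} applied to $\mathcal{L}^{m-i}$-harmonic data, or rather via the telescoped version) gives $\mathcal{L}^i w(x)$ as a sum of layer potentials for $x\in\Omega$; passing to the boundary and comparing with \eqref{20} forces $\mathcal{L}^i w(x)=0$ on $\partial\Omega$. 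For $i=m-1$, $\mathcal{L}^{m-1}w$ is $\mathcal{L}$-harmonic in $\Omega$ and vanishes on $\partial\Omega$, so $\mathcal{L}^{m-1}w\equiv0$ in $\Omega$ by Lemma \ref{uniqueness}; feeding this back reduces the order and one repeats, obtaining $\mathcal{L}^i w\equiv0$ in $\Omega$ for all $i$ and finally $w\equiv0$.

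\textbf{Main obstacle.} The delicate point is the boundary-limit argument: one must check that every single-layer term appearing — in particular those with the iterated kernels $\mathcal{L}^{m-1-j}\varepsilon_m(y,x)$, which are obtained by convolving $\varepsilon$ with itself and whose fine behaviour near the diagonal is less explicit than that of $\varepsilon$ — is still covered by (a suitable extension of) Theorem \ref{singlelayer}, so that only the single double-layer term with kernel $\varepsilon$ contributes a jump. Concretely this requires knowing that $\mathcal{L}^{m-1-j}\varepsilon_m$ is homogeneous of degree $2-Q+2(m-1-j)\geq 2-Q$ with a locally integrable $\widetilde\nabla$, and that $\widetilde\nabla\mathcal{L}^{j+i}u$ is bounded on $\partial\Omega$ (which follows from $u\in C^{2m-1}(\overline\Omega)$ since $j+i\le m-1$); granting the homogeneity/regularity of the iterated fundamental solutions, the rest is a routine iteration of the $m=1$ argument of Theorem \ref{THM:main}.
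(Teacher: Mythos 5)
Your proposal is correct and follows essentially the same route as the paper: iterate Green's second formula to obtain the telescoped representation of $\mathcal{L}^i u$, observe that for the generalised Newton potential the two boundary sums cancel in $\Omega$, pass to the boundary using Theorem \ref{singlelayer} for all the weakly singular terms and Theorem \ref{doublelayer} for the single genuine double-layer term with kernel $\widetilde{\nabla}\varepsilon=\widetilde{\nabla}\mathcal{L}^{m-1}\varepsilon_m$ (i.e.\ the $j=0$ term), and prove uniqueness by reducing to the iterated Dirichlet problem $\mathcal{L}^m v=0$, $\mathcal{L}^i v|_{\partial\Omega}=0$, handled inductively via Lemma \ref{uniqueness}. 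The ``main obstacle'' you flag (continuity of the single layers built from the iterated kernels $\mathcal{L}^{m-1-j}\varepsilon_m$, $j\geq 1$) is treated in the paper exactly as you suggest, by appealing to their higher homogeneity degree and Theorem \ref{singlelayer}.
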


\begin{proof}[Proof of Theorem \ref{THM:main2}.]
By applying Green's second formula for each $x\in \Omega$,
as in \eqref{8} (see Proposition \ref{green2}), we obtain
$$u(x)=\int_{\Omega}f(y)\varepsilon_{m}(y,x)d\nu(y)$$
$$=\int_{\Omega}\mathcal{L}^{m}u(y)\varepsilon_{m}(y,x)d\nu(y)$$

$$=\int_{\Omega}\mathcal{L}^{m-1}u(y)\mathcal{L}\varepsilon_{m}(y,x)d\nu(y)$$
$$-
\int_{\partial\Omega}\mathcal{L}^{m-1}u(y)\langle \widetilde{\nabla} \varepsilon_{m}(y,x),d\nu(y)\rangle$$

$$+\int_{\partial\Omega}\varepsilon_{m}(y,x)\langle \widetilde{\nabla} \mathcal{L}^{m-1}u(y),d\nu(y)\rangle$$
$$=
\int_{\Omega}\mathcal{L}^{m-2}u(y)\mathcal{L}^{2}\varepsilon_{m}(y,x)d\nu(y)$$

$$-\int_{\partial\Omega}\mathcal{L}^{m-2}u(y)\langle\widetilde{\nabla} \mathcal{L}\varepsilon_{m}(y,x),d\nu(y)\rangle$$

$$+\int_{\partial\Omega}\mathcal{L}\varepsilon_{m}(y,x)\langle\widetilde{\nabla} \mathcal{L}^{m-2}u(y),d\nu(y)\rangle$$

$$-\int_{\partial\Omega}\mathcal{L}^{m-1}u(y)\langle \widetilde{\nabla} \varepsilon_{m}(y,x),d\nu(y)\rangle$$

$$+\int_{\partial\Omega}\varepsilon_{m}(y,x)\langle \widetilde{\nabla} \mathcal{L}^{m-1}u(y),d\nu(y)\rangle=...$$

$$=u(x)-\sum_{j=0}^{m-1}\int_{\partial\Omega}\mathcal{L}^{j}u(y)\langle\widetilde{\nabla} \mathcal{L}^{m-1-j}\varepsilon_{m}(y,x),d\nu(y)\rangle$$
$$
+\sum_{j=0}^{m-1}\int_{\partial\Omega}\mathcal{L}^{m-1-j}\varepsilon_{m}(y,x)\langle\widetilde{\nabla} \mathcal{L}^{j}u(y),d\nu(y)\rangle,\quad x\in \Omega.
$$
This implies the identity
\begin{multline}\label{22}
\sum_{j=0}^{m-1}\int_{\partial\Omega}\mathcal{L}^{j}u(y)\langle\widetilde{\nabla}
\mathcal{L}^{m-1-j}\varepsilon_{m}(y,x),d\nu(y)\rangle\\
-\sum_{j=0}^{m-1}\int_{\partial\Omega}\mathcal{L}^{m-1-j}\varepsilon_{m}(y,x)\langle\widetilde{\nabla} \mathcal{L}^{j}u(y),d\nu(y)\rangle=0,\quad x\in \Omega.
\end{multline}
Note that here only the first term of the first summand, i.e.,
$j=0$ term of the first summand:

$$\int_{\partial\Omega}u(y)\langle\widetilde{\nabla}
\varepsilon(y,x),d\nu(y)\rangle$$
is the double layer potential (see Theorem \ref{doublelayer}). The other terms of the summands are single layer type potentials, that is, they are continuous functions on $\mathbb{G}$ (see Theorem \ref{singlelayer}).
By using the properties of the double and single layer potentials as $x$
approaches the boundary $\partial\Omega$ from the interior, from \eqref{22} we obtain
\begin{multline*}
(1-\mathcal{J}(x))u(x)+\sum_{j=0}^{m-1}\int_{\partial \Omega }\mathcal{L}^{j}u(y)\langle\widetilde{\nabla}
\mathcal{L}^{m-1-j}\varepsilon_{m}(y,x),d\nu(y)\rangle \\
-\sum_{j=0}^{m-1}\int_{\partial\Omega}\mathcal{L}^{m-1-j}\varepsilon_{m}(y,x)\langle\widetilde{\nabla} \mathcal{L}^{j}u(y),d\nu(y)\rangle=0,\quad x\in \partial\Omega.
\end{multline*}
Thus, this relation is one of the boundary conditions of \eqref{18}.
Let us derive the remaining boundary conditions. To this end, we write
\begin{equation}
\mathcal{L}^{m-i}\mathcal{L}^{i}u=f,\quad i=0,1,\ldots,m-1,\quad m=1,2,\ldots,
\label{24}
\end{equation}
and carry out similar considerations just as above. This yields

$$\mathcal{L}^{i}u(x)=\int_{\Omega}f(y)\mathcal{L}^{i}\varepsilon_{m}(y,x)d\nu(y)$$
$$=\int_{\Omega}\mathcal{L}^{m-i}\mathcal{L}^{i}u(y)\mathcal{L}^{i}\varepsilon_{m}(y,x)d\nu(y)$$
$$=\int_{\Omega}\mathcal{L}^{m-i-1}\mathcal{L}^{i}u(y)\mathcal{L}\mathcal{L}^{i}\varepsilon_{m}(y,x)d\nu(y)$$
$$-\int_{\partial\Omega}\mathcal{L}^{m-i-1}\mathcal{L}^{i}u(y)\langle\widetilde{\nabla} \mathcal{L}^{i}\varepsilon_{m}(y,x),d\nu(y)\rangle$$
$$+\int_{\partial\Omega}\mathcal{L}^{i}\varepsilon_{m}(y,x)\langle \widetilde{\nabla} \mathcal{L}^{m-i-1}\mathcal{L}^{i}u(y),d\nu(y)\rangle$$
$$=\int_{\Omega}\mathcal{L}^{m-i-2}\mathcal{L}^{i}u(y)\mathcal{L}^{2}\mathcal{L}^{i}\varepsilon_{m}(y,x)d\nu(y)$$
$$-\int_{\partial\Omega}\mathcal{L}^{m-i-2}\mathcal{L}^{i}u(y)\langle\widetilde{\nabla} \mathcal{L}\mathcal{L}^{i}\varepsilon_{m}(y,x),d\nu(y)\rangle$$
$$+\int_{\partial\Omega}\mathcal{L}\mathcal{L}^{i}\varepsilon_{m}(y,x)\langle\widetilde{\nabla} \mathcal{L}^{m-i-2}\mathcal{L}^{i}u(y),d\nu(y)\rangle$$
$$-\int_{\partial\Omega}\mathcal{L}^{m-i-1}\mathcal{L}^{i}u(y)\langle\widetilde{\nabla} \mathcal{L}^{i}\varepsilon_{m}(y,x),d\nu(y)\rangle$$
$$+\int_{\partial\Omega}\mathcal{L}^{i}\varepsilon_{m}(y,x)\langle\widetilde{\nabla} \mathcal{L}^{m-i-1}\mathcal{L}^{i}u(y),d\nu(y)\rangle$$
$$=...=\int_{\Omega}\mathcal{L}^{i}u(y)\mathcal{L}^{m-i}\mathcal{L}^{i}\varepsilon_{m}(y,x)d\nu(y)$$
$$-\sum_{j=0}^{m-i-1}\int_{\partial\Omega}\mathcal{L}^{j}\mathcal{L}^{i}u(y)\langle\widetilde{\nabla} \mathcal{L}^{m-i-1-j}\mathcal{L}^{i}\varepsilon_{m}(y,x),d\nu(y)\rangle$$
$$+\sum_{j=0}^{m-i-1}\int_{\partial\Omega}\mathcal{L}^{m-i-1-j}\mathcal{L}^{i}\varepsilon_{m}(y,x)
\langle\widetilde{\nabla} \mathcal{L}^{j}\mathcal{L}^{i}u(y),d\nu(y)\rangle$$
$$=\mathcal{L}^{i}u(x)-\sum_{j=0}^{m-i-1}\int_{\partial\Omega}\mathcal{L}^{j+i}u(y)\langle\widetilde{\nabla} \mathcal{L}^{m-1-j}\varepsilon_{m}(y,x),d\nu(y)\rangle$$
$$+\sum_{j=0}^{m-i-1}\int_{\partial\Omega}\mathcal{L}^{m-1-j}\varepsilon_{m}(y,x)\langle\widetilde{\nabla} \mathcal{L}^{j+i}u(y),d\nu(y)\rangle,\quad x\in\Omega,$$
where $\mathcal{L}^{i}\varepsilon_{m}$ is a fundamental solution of the equation \eqref{24}, i.e.,
$$
\mathcal{L}^{m-i}\mathcal{L}^{i}\varepsilon_{m}= \delta,\qquad i=0,1,\ldots,m-1.
$$
From the previous relations, we obtain the identities
\begin{multline*}
\sum_{j=0}^{m-i-1}\int_{\partial\Omega}
\mathcal{L}^{j+i}u(y)\langle\widetilde{\nabla} \mathcal{L}^{m-1-j}\varepsilon_{m}(y,x),d\nu(y)\rangle
\\
-\sum_{j=0}^{m-i-1}\int_{\partial\Omega}\mathcal{L}^{m-1-j}\varepsilon_{m}(y,x)
\langle\widetilde{\nabla} \mathcal{L}^{j+i}u(y),d\nu(y)\rangle=0
\end{multline*}
for any $x\in\Omega$ and $i=0,1,\ldots,m-1.$
By using the properties of the double and single layer potentials as
$x$ approaches the boundary $\partial\Omega$ from the interior of $\Omega$, we find that
\begin{multline*}\label{25}
(1-\mathcal{J}(x))\mathcal{L}^{i}u(x)+\sum_{j=0}^{m-i-1}\int_{\partial \Omega }
\mathcal{L}^{j+i}u(y)\langle\widetilde{\nabla} \mathcal{L}^{m-1-j}\varepsilon_{m}(y,x),d\nu(y)\rangle
\\
-\sum_{j=0}^{m-i-1}\int_{\partial\Omega}\mathcal{L}^{m-1-j}\varepsilon_{m}(y,x)
\langle\widetilde{\nabla} \mathcal{L}^{j+i}u(y),d\nu(y)\rangle=0,\quad
x\in\partial\Omega,
\end{multline*}
are all boundary conditions of \eqref{18} for each $i=0,1,\ldots,m-1$.

\medskip
Conversely, let us show that if a function $w\in C^{2m}(\Omega)\cap C^{2m-1}(\overline{\Omega})$ satisfies the equation $\mathcal{L}^{m}w=f$  and
the boundary conditions \eqref{20}, then it coincides with the solution \eqref{18}.
Indeed, otherwise the function
$$v=u-w\in C^{2m}(\Omega)\cap C^{2m-1}(\overline{\Omega}),$$
where $u$ is the generalised Newton potential \eqref{18}, satisfies the homogeneous equation
\begin{equation}
\mathcal{L}^{m}v=0\label{26}
\end{equation}
and the boundary conditions \eqref{20}, i.e.
\begin{multline*}
I_{i}(v)(x):=
(1-\mathcal{J}(x))\mathcal{L}^{i}v(x)
+\sum_{j=0}^{m-i-1}\int_{\partial \Omega }
\mathcal{L}^{j+i}v(y)\langle\widetilde{\nabla} \mathcal{L}^{m-1-j}\varepsilon_{m}(y,x),d\nu(y)\rangle
\\ -
\sum_{j=0}^{m-i-1}\int_{\partial\Omega}\mathcal{L}^{m-1-j}\varepsilon_{m}(y,x)
\langle\widetilde{\nabla} \mathcal{L}^{j+i}v(y),d\nu(y)\rangle=0,\quad i=0,1,\ldots,m-1,
\end{multline*}
for $x\in\partial\Omega.$
By applying the Green formula to the function $v\in C^{2m}(\Omega)\cap C^{2m-1}(\overline{\Omega})$ and by following the lines of the above
argument, we obtain
$$
0=\int_{\Omega}\mathcal{L}^{m}v(x)\mathcal{L}^{i}\varepsilon_{m}(y,x)d\nu(y)$$
$$=\int_{\Omega}\mathcal{L}^{m-i}\mathcal{L}^{i}v(x)\mathcal{L}^{i}\varepsilon_{m}(y,x)d\nu(y)$$
$$=\int_{\Omega}\mathcal{L}^{m-1}v(x)\mathcal{L}\mathcal{L}^{i}\varepsilon_{m}(y,x)d\nu(y)$$
$$-\int_{\partial\Omega}\mathcal{L}^{m-1}v(x)\langle\widetilde{\nabla} \mathcal{L}^{i}\varepsilon_{m}(y,x),d\nu(y)\rangle$$
$$+\int_{\partial\Omega}\mathcal{L}^{i}\varepsilon_{m}(y,x)\langle\widetilde{\nabla} \mathcal{L}^{m-1}v(x),d\nu(y)\rangle$$

$$=\int_{\Omega}\mathcal{L}^{m-2}v(x)\mathcal{L}^{2}
\mathcal{L}^{i}\varepsilon_{m}(y,x)d\nu(y)$$

$$-\int_{\partial\Omega}\mathcal{L}^{m-2}v(x)\langle\widetilde{\nabla} \mathcal{L}^{i+1}\varepsilon_{m}(y,x),d\nu(y)\rangle$$
$$+\int_{\partial\Omega}\mathcal{L}^{i+1}\varepsilon_{m}(y,x)\langle\widetilde{\nabla} \mathcal{L}^{m-2}v(x),d\nu(y)\rangle$$

$$-\int_{\partial\Omega}\mathcal{L}^{m-1}v(x)\langle\widetilde{\nabla} \mathcal{L}^{i}\varepsilon_{m}(y,x),d\nu(y)\rangle$$
$$+\int_{\partial\Omega}\mathcal{L}^{i}\varepsilon_{m}(y,x)\langle\widetilde{\nabla} \mathcal{L}^{m-1}v(x),d\nu(y)\rangle=...$$

$$=\mathcal{L}^{i}v(x)-\sum_{j=0}^{m-i-1}\int_{\partial\Omega}\mathcal{L}^{j+i}v(y)\langle\widetilde{\nabla} \mathcal{L}^{m-1-j}\varepsilon_{m}(y,x),d\nu(y)\rangle$$
$$+\sum_{j=0}^{m-i-1}\int_{\partial\Omega}\mathcal{L}^{m-1-j}\varepsilon_{m}(y,x)\langle\widetilde{\nabla} \mathcal{L}^{j+i}v(y),d\nu(y)\rangle, \quad i=0,1,\ldots,m-1.$$

By passing to the limit as $x\rightarrow \partial\Omega$ from interior, we obtain the relations
\begin{equation}
\mathcal{L}^{i}v(x)\mid_{x\in\partial\Omega}=I_{i}(v)(x)\mid_{x\in\partial\Omega}=0,\quad
i=0,1,\ldots,m-1.
\end{equation}

Assuming for the moment the uniqueness of the solution of the boundary value problem
\begin{equation}\label{EQ:DPm}
\mathcal{L}^{m}v=0,
\end{equation}
$$
\mathcal{L}^{i}v\mid_{\partial\Omega}=0, \quad i=0,1,\ldots,m-1,
$$
we get that $v=u-w\equiv0$, for all $x\in\Omega$, i.e. $w$ coincides with $u$ in $\Omega$.
Thus \eqref{18} is the unique solution of the boundary value problem
\eqref{17}, \eqref{20} in $\Omega$.

It remains to show that the boundary value problem \eqref{EQ:DPm} has a unique solution
in $C^{2m}(\Omega)\cap C^{2m-1}(\overline{\Omega})$.
Denoting
$$\tilde v:= \mathcal{L}^{m-1}v,$$
this follows by induction
from the uniqueness in $C^{2}(\Omega)\cap C^{1}(\overline{\Omega})$ of the problem
$$
\mathcal{L} \tilde v=0,\quad
$$
$$
\tilde v\mid_{\partial\Omega}=0.
$$
The proof of Theorem \ref{THM:main2} is complete.
\end{proof}

\begin{rem}
It follows from Theorem \ref{THM:main2} that the kernel \eqref{19}, which is a
fundamental solution of the equation \eqref{17}, is the Green function of the
boundary value problem \eqref{17}, \eqref{20} in $\Omega$. Therefore, the
boundary value problem
\eqref{17}, \eqref{20} can serve as an example of an explicitly solvable
 boundary value problem for the iterated sub-Laplacian in any (admissible)
domain $\Omega$ on the homogeneous Carnot  group.
\end{rem}

\section{Refined Hardy inequality and uncertainty principles}
\label{Sec7}

Here we give another example of the use of the developed techniques to prove a refined version of the Hardy inequality for homogenous Carnot groups.
Let $u\in C^{\infty}_{0}(\mathbb{G}\backslash \{0\})$, $\alpha\in \mathbb{R}$, $Q\geq 3$, and $\alpha>2-Q.$ Then it was shown in \cite{GolKom} that we have the Hardy inequality
\begin{equation}\label{eqgHardy}
\int_{\mathbb{G}}d^{\alpha} |\nabla_{\mathbb G} u|^{2} \,d\nu\geq
\left(\frac{Q+\alpha-2}{2}\right)^{2}\int_{\mathbb{G}}
d^{\alpha}\frac{|\nabla_{\mathbb G} d|^{2}}{d^{2}}
|u|^{2}\,d\nu,
\end{equation}
and the constant $\left(\frac{Q+\alpha-2}{2}\right)^{2}$ is sharp.
In the case of ${\mathbb G}=(\mathbb R^{N},+)$, this recovers the Hardy inequality:
here $Q=N$, $d(x)=|x|$ is the Euclidean norm, and with $\alpha=0$, this gives
the classical Hardy inequality
\begin{equation}\label{HRn}
\int_{\mathbb{R}^{N}}|\nabla u(x)|^{2}dx\geq\left(\frac{N-2}{2}\right)^{2} \int_{\mathbb{R}^{N}}\frac{|u(x)|^{2}}{|x|^{2}}dx,\quad N\geq 3,
\end{equation}
where $\nabla$ is the standard gradient in $\mathbb{R}^{N}$, $u\in C_{0}^{\infty}(\mathbb{R}^{N}\backslash {0})$,
and the constant $\left(\frac{N-2}{2}\right)^{2}$ is sharp.
On the Heisenberg group versions of \eqref{eqgHardy} were obtained in
\cite{GL,GZ} using explicit formulae for the fundamental solution of $\mathcal L$ and of the
distance function $d$. We refer to \cite{DGP}, \cite{GolKom} and more recent work \cite{Lund09} to
for other references on this subject,
and to \cite{BCG} and \cite{BFG} for Besov space versions of Hardy inequalities on the
Heisenberg group and on graded groups, respectively.

We now present a refined local version of this inequality with an additional boundary term
on the right hand side of the inequality \eqref{eqgHardy}. The following Proposition \ref{LHardy} also implies inequality \eqref{eqgHardy} if we take the domain $\Omega$ containing the support of
$u\in C^{\infty}_{0}(\mathbb{G}\backslash \{0\})$, so that the boundary term in \eqref{LH2} is then equal to zero on $\partial\Omega$.
The (very short) proof of Proposition \ref{LHardy} relies on Green's first formula
that we obtained in Proposition \ref{green1}.

\begin{prop}\label{LHardy}
Let $\Omega\subset\mathbb{G}$ be an admissible domain with $0\not\in\partial\Omega$ and
let $\alpha\in \mathbb{R}$,\ $\alpha>2-Q$, and $Q\geq 3.$
Let  $u\in C^{1}(\Omega)\bigcap C(\overline{\Omega})$.
 Then the following generalised local Hardy
inequality is valid
\begin{multline}\label{LH2}
\int_{\Omega}d^{\alpha} |\nabla_{\mathbb G} u|^{2} \,d\nu\geq
\left(\frac{Q+\alpha-2}{2}\right)^{2}\int_{\Omega}
d^{\alpha}\frac{|\nabla_{\mathbb G} d|^{2}}{d^{2}}
|u|^{2}\,d\nu\\+
\frac{Q+\alpha-2}{2(Q-2)}\int_{\partial\Omega}d^{Q+\alpha-2}|u|^{2}
\langle\widetilde{\nabla}d^{2-Q},
d\nu\rangle,
\end{multline}
where $d$ is the $\mathcal{L}$-gauge and
$\nabla_{\mathbb G}=(X_{1},\ldots,X_{N_{1}})$.
\end{prop}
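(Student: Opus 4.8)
The plan is to run the classical completion-of-square argument, with every integration by parts supplied by the Divergence formula (Proposition \ref{stokes}), equivalently by Green's first formula (Proposition \ref{green1}). The only extra ingredient is the pointwise identity
$$\mathcal{L}d=(Q-1)\,\frac{|\nabla_{\mathbb G}d|^{2}}{d}\qquad\text{on }\mathbb{G}\setminus\{0\},$$
which one reads off from $\mathcal{L}d^{2-Q}=0$ on $\mathbb{G}\setminus\{0\}$ (recall $d^{2-Q}=\varepsilon(\cdot,0)$) by expanding $0=\mathcal{L}d^{2-Q}=(2-Q)\bigl((1-Q)d^{-Q}|\nabla_{\mathbb G}d|^{2}+d^{1-Q}\mathcal{L}d\bigr)$; in the Euclidean case this is just $\Delta|x|=(N-1)/|x|$.

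First I would fix a real parameter $\lambda$ and start from the elementary inequality
$$0\le\int_{\Omega}d^{\alpha}\Bigl|\nabla_{\mathbb G}u+\lambda\,\frac{u}{d}\,\nabla_{\mathbb G}d\Bigr|^{2}d\nu=\int_{\Omega}d^{\alpha}|\nabla_{\mathbb G}u|^{2}\,d\nu+\lambda\int_{\Omega}d^{\alpha-1}\sum_{k=1}^{N_{1}}X_{k}(|u|^{2})\,X_{k}d\,d\nu+\lambda^{2}\int_{\Omega}d^{\alpha-2}|\nabla_{\mathbb G}d|^{2}|u|^{2}\,d\nu,$$
where the middle term already appears in this form because $2\operatorname{Re}(\bar u\,X_{k}u)=X_{k}|u|^{2}$. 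Applying Proposition \ref{stokes} to $f_{k}=|u|^{2}d^{\alpha-1}X_{k}d$ together with
$$\sum_{k=1}^{N_{1}}X_{k}\bigl(d^{\alpha-1}X_{k}d\bigr)=(\alpha-1)d^{\alpha-2}|\nabla_{\mathbb G}d|^{2}+d^{\alpha-1}\mathcal{L}d=(Q+\alpha-2)\,d^{\alpha-2}|\nabla_{\mathbb G}d|^{2}$$
turns the middle term into
$$\lambda\int_{\partial\Omega}|u|^{2}d^{\alpha-1}\langle\widetilde{\nabla}d,d\nu\rangle-\lambda(Q+\alpha-2)\int_{\Omega}|u|^{2}d^{\alpha-2}|\nabla_{\mathbb G}d|^{2}\,d\nu.$$
Substituting back and rearranging gives, for every $\lambda\in\mathbb{R}$,
$$\int_{\Omega}d^{\alpha}|\nabla_{\mathbb G}u|^{2}\,d\nu\ge\bigl(\lambda(Q+\alpha-2)-\lambda^{2}\bigr)\int_{\Omega}d^{\alpha-2}|\nabla_{\mathbb G}d|^{2}|u|^{2}\,d\nu-\lambda\int_{\partial\Omega}|u|^{2}d^{\alpha-1}\langle\widetilde{\nabla}d,d\nu\rangle.$$
Choosing $\lambda=\tfrac{Q+\alpha-2}{2}$ maximises $\lambda(Q+\alpha-2)-\lambda^{2}$ to $\bigl(\tfrac{Q+\alpha-2}{2}\bigr)^{2}$, which produces the first term of \eqref{LH2}; and since $\widetilde{\nabla}d^{2-Q}=(2-Q)d^{1-Q}\widetilde{\nabla}d$, one has $d^{Q+\alpha-2}\langle\widetilde{\nabla}d^{2-Q},d\nu\rangle=-(Q-2)d^{\alpha-1}\langle\widetilde{\nabla}d,d\nu\rangle$, so the boundary term $-\tfrac{Q+\alpha-2}{2}\int_{\partial\Omega}|u|^{2}d^{\alpha-1}\langle\widetilde{\nabla}d,d\nu\rangle$ equals $\tfrac{Q+\alpha-2}{2(Q-2)}\int_{\partial\Omega}d^{Q+\alpha-2}|u|^{2}\langle\widetilde{\nabla}d^{2-Q},d\nu\rangle$, which is exactly \eqref{LH2}.

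The one point requiring care is the singularity of $d$ at the origin, since the hypothesis forbids only $0\in\partial\Omega$ and allows $0\in\Omega$. If $0\notin\overline{\Omega}$ the computation above is literal. If $0\in\Omega$, I would instead apply Proposition \ref{stokes} on $\Omega\setminus\overline{B_{\varepsilon}}$, $B_{\varepsilon}=\{d<\varepsilon\}$, and let $\varepsilon\to0$: the volume integrals in \eqref{LH2} converge because near $0$ the density $d^{\alpha-2}|\nabla_{\mathbb G}d|^{2}$ times the volume element is integrable precisely when $Q+\alpha-2>0$, and the extra boundary contribution over $\partial B_{\varepsilon}$ equals $\varepsilon^{\alpha-1}\int_{\partial B_{\varepsilon}}|u|^{2}\langle\widetilde{\nabla}d,d\nu\rangle=O(\varepsilon^{Q+\alpha-2})\to0$, using that $\int_{\partial B_{\varepsilon}}\langle\widetilde{\nabla}d,d\nu\rangle=O(\varepsilon^{Q-1})$ (which follows from Corollary \ref{COR:v1} since $d^{2-Q}=\varepsilon(\cdot,0)$). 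I expect the bookkeeping of this limiting argument, rather than any conceptual difficulty, to be the only real work; everything else is forced once the completion-of-square ansatz and the identity for $\mathcal{L}d$ are in place.
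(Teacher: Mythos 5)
Your proof is correct and is essentially the paper's own argument in different clothing: completing the square in $\lambda$ is exactly equivalent to the paper's substitution $u=d^{\gamma}q$ with $\gamma=-\lambda$ (discarding the nonnegative term $\int_\Omega d^{\alpha+2\gamma}|\nabla_{\mathbb G}q|^{2}\,d\nu$), both hinge on the divergence/Green formula for the cross term and on $\mathcal{L}d^{2-Q}=0$ away from the origin, and both arrive at the same optimal parameter $\lambda=-\gamma=\tfrac{Q+\alpha-2}{2}$. Your explicit excision of $B_{\varepsilon}$ when $0\in\Omega$ is if anything slightly more careful than the paper's appeal to $\mathcal{L}d^{2-Q}=\tfrac1{\beta_d}\delta$.
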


If $u=0$ on $\partial\Omega$, Proposition \ref{LHardy} reduces to \eqref{eqgHardy}.
Moreover, it follows from Remark \ref{REM:pos} and the range of $\alpha$ that the boundary term in \eqref{LH2} can be positive:
\begin{equation}\label{LH:pos}
\frac{Q+\alpha-2}{2(Q-2)}\int_{\partial\Omega}d^{Q+\alpha-2}|u|^{2}
\langle\widetilde{\nabla}d^{2-Q},
d\nu\rangle> 0.
\end{equation}
Since it is known that the constant $\left(\frac{Q+\alpha-2}{2}\right)^{2}$ in \eqref{LH2}
(or rather in \eqref{eqgHardy}) is sharp, the local inequality \eqref{LH2} gives a refinement
to \eqref{eqgHardy}.
We also note that in comparison to \eqref{eqgHardy}, we do not assume in Proposition  \ref{LHardy}
that $0$ is not in the support of the function $u$ since for $\alpha>2-Q$ all the integrals in
\eqref{LH2} are convergent.

Even if $0\in\partial\Omega$, the statement of Proposition \ref{LHardy} remains true if
$0\not\in\partial\Omega\cap{\rm supp }\, u$.

\begin{rem}\label{REM:pos}
Let us show that the last (boundary) term in \eqref{LH2} sometimes changes its
sign. For this purpose when $u=e^{-\frac{R}{2}d},\;R>0,$ applying Green's first
formula (see Proposition \ref{green1}) we calculate
$$\int_{\partial\Omega}d^{Q+\alpha-2}e^{-Rd}
\langle\widetilde{\nabla}d^{2-Q},
d\nu\rangle=\int_{\Omega}\widetilde{\nabla}(d^{Q+\alpha-2}e^{-Rd})d^{2-Q}d\nu$$
$$+\frac{1}{\beta_{d}}\int_{\Omega}d^{Q+\alpha-2}e^{-Rd}\mathcal{L}\beta_{d}d^{2-Q}d\nu=$$
$$=\int_{\Omega}\widetilde{\nabla}(d^{Q+\alpha-2}e^{-Rd})d^{2-Q}d\nu
=\sum_{k=1}^{N_{1}}\int_{\Omega}X_{k}(d^{Q+\alpha-2}e^{-Rd})X_{k}d^{2-Q}d\nu$$
$$=\sum_{k=1}^{N_{1}}\int_{\Omega}\left((Q+\alpha-2)d^{Q+\alpha-2-1}e^{-Rd}
X_{k}d-Rd^{Q+\alpha-2}e^{-Rd}X_{k}d\right)(2-Q)d^{2-Q-1}X_{k}d\,d\nu.$$
 Let
$\alpha=0$, $Q=3,$ and $\Omega\bigcap\Omega_{\frac{1}{R}}= \{\emptyset\},$
where $\Omega_{\frac{1}{R}}=\{x\in\mathbb{G}:\,d(x)<\frac{1}{R}\}$. Then we get
 $$\int_{\partial\Omega}de^{-Rd}
\langle\widetilde{\nabla}d^{-1},
d\nu\rangle=\sum_{k=1}^{N_{1}}\int_{\Omega}(Rd^{-1}-d^{-2})e^{-Rd}(X_{k}d)^{2}\,d\nu>0$$
can be positive, that is, we give an example which shows that the boundary term
can be positive. Similarly, if $u:=C=const$, then we get
$$\int_{\partial\Omega}d^{Q+\alpha-2}C^{2}
\langle\widetilde{\nabla}d^{2-Q},
d\nu\rangle$$
$$=C^{2}\sum_{k=1}^{N_{1}}\int_{\Omega}\left((Q+\alpha-2)d^{Q+\alpha-2-1}
X_{k}d\right)(2-Q)d^{2-Q-1}X_{k}d\,d\nu$$
$$=-C^{2}(Q+\alpha-2)(Q-2)\sum_{k=1}^{N_{1}}\int_{\Omega}d^{\alpha-2}(X_{k}d)^{2}d\nu<0,$$
which shows that the boundary term can also be negative.
\end{rem}

\begin{proof}[Proof of Proposition \ref{LHardy}]
Without loss of generality we can assume that $u$ is real-valued.
In this case, recalling that $$(\widetilde{\nabla}u)u=\sum_{k=1}^{N_{1}} (X_{k}u)X_{k}u=|\nabla_{\mathbb G}u|^{2},$$ \eqref{LH2}
reduces to
\begin{multline}\label{LH}
\int_{\Omega}d^{\alpha}(\widetilde{\nabla}u)u\,d\nu\geq
\left(\frac{Q+\alpha-2}{2}\right)^{2}\int_{\Omega}
d^{\alpha}\frac{(\widetilde{\nabla}d)d}{d^{2}}
u^{2}\,d\nu\\+
\frac{Q+\alpha-2}{2(Q-2)}\int_{\partial\Omega}d^{Q+\alpha-2}u^{2}
\langle\widetilde{\nabla}d^{2-Q},
d\nu\rangle,
\end{multline}
which we will now prove.
Setting $u=d^{\gamma}q$ for some $\gamma\not=0$ to be chosen later,
we have
$$(\widetilde{\nabla}u)u=
(\widetilde{\nabla}d^{\gamma}q)d^{\gamma}q=\sum_{k=1}^{N_{1}}X_{k}(d^{\gamma}q)X_{k}(d^{\gamma}q)$$
$$=\gamma^{2}d^{2\gamma-2}\sum_{k=1}^{N_{1}}(X_{k}d)^{2}q^{2}+2\gamma d^{2\gamma-1}q\sum_{k=1}^{N_{1}}X_{k}d\,X_{k}q+d^{2\gamma}
\sum_{k=1}^{N_{1}}(X_{k}q)^{2}$$
$$=\gamma^{2}d^{2\gamma-2}((\widetilde{\nabla}d)d)q^{2}+2\gamma d^{2\gamma-1}q(\widetilde{\nabla}d)q+d^{2\gamma}(\widetilde{\nabla}q)q.$$
Multiplying both sides of the above equality by $d^{\alpha}$ and applying Green's first formula (see Proposition \ref{green1}), we get
\begin{multline}\label{H1}
\int_{\Omega}d^{\alpha}(\widetilde{\nabla}u)ud\nu=\gamma^{2}
\int_{\Omega}d^{\alpha+2\gamma-2}((\widetilde{\nabla}d)d)\,q^{2}d\nu+
\frac{\gamma}{\alpha+2\gamma}\int_{\partial\Omega}q^{2}
\langle\widetilde{\nabla}d^{\alpha+2\gamma},
d\nu\rangle
\\-\frac{\gamma}{\alpha+2\gamma}\int_{\Omega}(\mathcal{L}d^{\alpha+2\gamma})q^{2}
d\nu+\int_{\Omega}d^{\alpha+2\gamma}(\widetilde{\nabla}q)qd\nu
\geq  \int_{\Omega}\gamma^{2}d^{\alpha+2\gamma-2}((\widetilde{\nabla}d)d)
\,q^{2}d\nu\\+
\frac{\gamma}{\alpha+2\gamma}\int_{\partial\Omega}q^{2}
\langle\widetilde{\nabla}d^{\alpha+2\gamma},
d\nu\rangle-\frac{\gamma}{\alpha+
2\gamma}\int_{\Omega}(\mathcal{L}d^{\alpha+2\gamma})q^{2}d\nu.
\end{multline}
On the other hand, it can be readily checked that we have
\begin{equation}\label{H2}
-\frac{\gamma}{\alpha+2\gamma}\mathcal{L}d^{\alpha+2\gamma}=-\gamma(\alpha+2\gamma+Q-2)d^{\alpha+2\gamma-2}(\widetilde{\nabla}d)d
-\frac{\gamma}{2-Q}d^{\alpha+2\gamma+Q-2}\mathcal{L}d^{2-Q}.
\end{equation}
Since $q^{2}=d^{-2\gamma}u^{2},$ substituting \eqref{H2} into \eqref{H1} we obtain

$$\int_{\Omega}d^{\alpha}(\widetilde{\nabla}u)ud\nu\geq (-\gamma^{2}-\gamma(\alpha+Q-2))\int_{\Omega}d^{\alpha}\frac{(\widetilde{\nabla}d)d}{d^{2}}u^{2}d\nu$$
$$-\frac{\gamma}{2-Q}\int_{\Omega}(\mathcal{L}d^{2-Q})d^{\alpha+Q-2}u^{2}dx
+
\frac{\gamma}{\alpha+2\gamma}\int_{\partial\Omega}d^{-2\gamma}u^{2}
\langle\widetilde{\nabla}d^{\alpha+2\gamma},
d\nu\rangle.$$
Recall that $\varepsilon=\beta_{d}d^{2-Q},\;Q\geq3,$ is the fundamental solution of the sub-Laplacian $\mathcal{L}$, therefore
$$\int_{\Omega}(\mathcal{L}d^{2-Q})d^{\alpha+Q-2}u^{2}dx=0,\;\alpha>2-Q,$$
independent of whether $0$ belongs to $\Omega$ or not, since $\mathcal L d^{2-Q}=\frac{1}{\beta_{d}}\delta$ in $\mathbb G$.
Thus
$$\int_{\Omega}d^{\alpha}(\widetilde{\nabla}u)ud\nu\geq (-\gamma^{2}-\gamma(\alpha+Q-2))\int_{\Omega}d^{\alpha}
\frac{(\widetilde{\nabla}d)d}{d^{2}}u^{2}d\nu+
\frac{\gamma}{\alpha+2\gamma}\int_{\partial\Omega}d^{-2\gamma}u^{2}
\langle\widetilde{\nabla}d^{\alpha+2\gamma},
d\nu\rangle.$$
Taking
$\gamma=\frac{2-Q-\alpha}{2},$
we obtain \eqref{LH}.
\end{proof}

Proposition \ref{LHardy} implies the following local
uncertainly principles.

\begin{cor}[Uncertainly principle on $\Omega$]\label{Luncertainty}
Let $\Omega\subset\mathbb{G}$ be an admissible domain with $0\not\in\partial\Omega$ and let $u\in C^{1}(\Omega)\bigcap C(\overline{\Omega})$.
Then
\begin{multline}\label{UP1}
\left(\int_{\Omega}d^{2}|\nabla_{\mathbb G}d|^{2} |u|^{2}d\nu\right)
\left(\int_{\Omega}|\nabla_{\mathbb G}u|^{2}d\nu\right)
\geq\left(\frac{Q-2}{2}\right)^{2}\left(\int_{\Omega}
|\nabla_{\mathbb G}d|^{2} |u|^{2} d\nu\right)^{2}\\+
\frac{1}{2}
\int_{\partial\Omega}d^{Q-2}|u|^{2}
\langle\widetilde{\nabla}d^{2-Q},
d\nu\rangle\left(\int_{\Omega}d^{2}|\nabla_{\mathbb G}d|^{2} |u|^{2}d\nu\right)
\end{multline}
and also
\begin{multline}\label{UP2}
\left(\int_{\Omega}\frac{d^{2}}{|\nabla_{\mathbb G}d|^{2}}|u|^{2}d\nu\right)
\left(\int_{\Omega}|\nabla_{\mathbb G}u|^{2}d\nu\right)
\geq\left(\frac{Q-2}{2}\right)^{2}\left(\int_{\Omega}|u|^{2}d\nu\right)^{2}
\\+
\frac{1}{2}
\int_{\partial\Omega}d^{Q-2}|u|^{2}
\langle\widetilde{\nabla}d^{2-Q},
d\nu\rangle\,\left(\int_{\Omega}\frac{d^{2}}{|\nabla_{\mathbb G}d|^{2}}|u|^{2}d\nu\right).
\end{multline}
\end{cor}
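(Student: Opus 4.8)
The plan is to derive both \eqref{UP1} and \eqref{UP2} directly from Proposition \ref{LHardy} with the choice $\alpha=0$, together with a single application of the Cauchy--Schwarz inequality in $L^{2}(\Omega,d\nu)$. Setting $\alpha=0$ in \eqref{LH2} one has $\frac{Q+\alpha-2}{2(Q-2)}=\frac12$ and $\left(\frac{Q+\alpha-2}{2}\right)^{2}=\left(\frac{Q-2}{2}\right)^{2}$, so that Proposition \ref{LHardy} becomes
\[
\int_{\Omega}|\nabla_{\mathbb G} u|^{2}\,d\nu\geq\left(\frac{Q-2}{2}\right)^{2}\int_{\Omega}\frac{|\nabla_{\mathbb G} d|^{2}}{d^{2}}|u|^{2}\,d\nu+\frac12\int_{\partial\Omega}d^{Q-2}|u|^{2}\langle\widetilde{\nabla}d^{2-Q},d\nu\rangle .
\]

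For \eqref{UP1} I would multiply this inequality by the nonnegative quantity $A:=\int_{\Omega}d^{2}|\nabla_{\mathbb G}d|^{2}|u|^{2}\,d\nu$, which is finite since $\Omega$ is bounded, $|\nabla_{\mathbb G}d|$ is bounded (it is homogeneous of degree $0$), and $u\in C(\overline{\Omega})$. The boundary contribution then reproduces exactly the last term of \eqref{UP1}, and it only remains to bound the main term from below by means of the Cauchy--Schwarz estimate
\[
\left(\int_{\Omega}|\nabla_{\mathbb G}d|^{2}|u|^{2}\,d\nu\right)^{2}\leq\left(\int_{\Omega}d^{2}|\nabla_{\mathbb G}d|^{2}|u|^{2}\,d\nu\right)\left(\int_{\Omega}\frac{|\nabla_{\mathbb G}d|^{2}}{d^{2}}|u|^{2}\,d\nu\right),
\]
which follows by writing $|\nabla_{\mathbb G}d|^{2}|u|^{2}=(d|\nabla_{\mathbb G}d|\,|u|)(d^{-1}|\nabla_{\mathbb G}d|\,|u|)$ and applying Cauchy--Schwarz to this factorisation. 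Combining the two displays gives \eqref{UP1}.

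The proof of \eqref{UP2} is identical after changing the weight: I would instead multiply the $\alpha=0$ Hardy inequality by $B:=\int_{\Omega}\frac{d^{2}}{|\nabla_{\mathbb G}d|^{2}}|u|^{2}\,d\nu$ and use the Cauchy--Schwarz estimate
\[
\left(\int_{\Omega}|u|^{2}\,d\nu\right)^{2}\leq\left(\int_{\Omega}\frac{d^{2}}{|\nabla_{\mathbb G}d|^{2}}|u|^{2}\,d\nu\right)\left(\int_{\Omega}\frac{|\nabla_{\mathbb G}d|^{2}}{d^{2}}|u|^{2}\,d\nu\right),
\]
coming from $|u|^{2}=\bigl(\tfrac{d}{|\nabla_{\mathbb G}d|}|u|\bigr)\bigl(\tfrac{|\nabla_{\mathbb G}d|}{d}|u|\bigr)$; this lower-bounds the main term by $\left(\frac{Q-2}{2}\right)^{2}\left(\int_{\Omega}|u|^{2}\,d\nu\right)^{2}$, which is precisely \eqref{UP2}. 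There is no genuine obstacle here, the statement being a formal consequence of Proposition \ref{LHardy} and Cauchy--Schwarz; the only point deserving a remark is the finiteness of the weighted integrals $A$, $B$ and of $\int_{\Omega}d^{-2}|\nabla_{\mathbb G}d|^{2}|u|^{2}\,d\nu$ (the last being covered by the convergence comment following Proposition \ref{LHardy}), and when any of these is infinite the corresponding inequality holds trivially.
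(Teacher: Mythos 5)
Your proposal is correct and follows essentially the same route as the paper: the authors also set $\alpha=0$ in Proposition \ref{LHardy}, multiply the resulting inequality by the weighted integral $\int_{\Omega}d^{2}|\nabla_{\mathbb G}d|^{2}|u|^{2}\,d\nu$ (respectively $\int_{\Omega}\frac{d^{2}}{|\nabla_{\mathbb G}d|^{2}}|u|^{2}\,d\nu$), and conclude by the same Cauchy--Schwarz (H\"older) factorisation. Your added remarks on finiteness of the weighted integrals are a harmless refinement of what the paper leaves implicit.
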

\begin{proof}
Again, assuming $u$ is real-valued, and taking $\alpha=0$ in the inequality \eqref{LH} we get
$$ \left(\int_{\Omega}d^{2}((\widetilde{\nabla}d)d)|u|^{2}d\nu\right)
\left(\int_{\Omega}(\widetilde{\nabla}u)ud\nu\right)\geq$$$$ \left(\frac{Q-2}{2}\right)^{2}\left(\int_{\Omega}d^{2}
((\widetilde{\nabla}d)d)|u|^{2}d\nu\right)\int_{\Omega}
\frac{(\widetilde{\nabla}d)d}{d^{2}}|u|^{2}\,d\nu$$$$+
\frac{1}{2}
\int_{\partial\Omega}d^{Q-2}|u|^{2}
\langle\widetilde{\nabla}d^{2-Q},
d\nu\rangle\left(\int_{\Omega}d^{2}|\nabla_{\mathbb G}d|^{2} |u|^{2}d\nu\right)$$
$$
\geq\left(\frac{Q-2}{2}\right)^{2}\left(\int_{\Omega}
((\widetilde{\nabla}d)d)|u|^{2}d\nu\right)^{2}$$$$+
\frac{1}{2}
\int_{\partial\Omega}d^{Q-2}|u|^{2}
\langle\widetilde{\nabla}d^{2-Q},
d\nu\rangle\left(\int_{\Omega}d^{2}|\nabla_{\mathbb G}d|^{2} |u|^{2}d\nu\right),
$$
where we have used the H\"older inequality in the last line.
This shows \eqref{UP1}. The proof of \eqref{UP2} is similar.
\end{proof}

By Remark \ref{REM:pos}, the last (boundary) terms in \eqref{UP1} and
\eqref{UP2} can be positive. Thus, they provide refinements to the uncertainty
principles on $\mathbb G$ when we take $u\in C_{0}^{\infty}(\mathbb
G\backslash\{0\})$ and then $\Omega$ large enough so that it contains the
support of $u$, so that these (boundary) terms on $\partial\Omega$ vanish. In
the Euclidean case ${\mathbb G}=(\mathbb R^{N},+)$ with $d(x)=|x|$, we have
$|\nabla d|=1$, so that both \eqref{UP1} and \eqref{UP2} imply the classical
uncertainty principle for $\Omega\subset\mathbb R^{N}$,
\begin{equation*}\label{UPRn}
\left(\int_{\Omega} |x|^{2} |u(x)|^{2}dx\right)
\left(\int_{\Omega}|\nabla u(x)|^{2}dx\right)
\geq\left(\frac{N-2}{2}\right)^{2}\left(\int_{\Omega}
 |u(x)|^{2} dx\right)^{2},\quad N\geq 3.
\end{equation*}

\section{Appendix. Boundary forms and measures}
\label{SEC:mes}

In this section we briefly describe the relation between the forms $\langle X_{j},d\nu\rangle$,
perimeter measure, and the surface measure on the boundary $\partial\Omega$.
We would like to thank Valentino Magnani for a discussion on this topic as well as Nicola Garofalo for useful comments.

Let $L_{X}$ denote the Lie derivative with respect to the vector field $X$.
The Cartan formula for $L_{X}$ gives
$$L_{X}=d\,\imath_{X}+\imath_{X}\,d,$$
where $$\imath_{X} d\nu =\langle X, d\nu \rangle$$ is the contraction of the volume form
$d\nu=dx_{1}\wedge\ldots\wedge dx_{n}$ by $X$.
For any left invariant vector field $X_{j}$ we have
\begin{equation}\label{div1}
\int_{\Omega}X_{j} \varphi \, d\nu=
\int_{\Omega}{\rm div}\,(\varphi X_{j})d\nu
=\int_{\Omega}L_{\varphi X_{j}}d\nu=\int_{\Omega}d(\imath_{\varphi X_{j}}d\nu)=
\int_{\partial\Omega} \varphi \langle X_{j}, d\nu\rangle,
\end{equation}
where we use the Cartan formula and
 the Stokes formula in the last two equalities, respectively.
 This explains Proposition \ref{stokes} from the general point of view of differential forms.
Now, recall that the perimeter measure on $\partial\Omega$, which we may assume piecewise smooth here for simplicity, is given by
$$
\sigma_{H}(\partial\Omega)=
\sup\left\{ \sum_{i=1}^{N_{1}}\int_{\partial\Omega}\psi_{i}
\langle X_{i}, d\nu\rangle:\; \psi=(\psi_{1},\ldots,\psi_{N_{1}}),\, |\psi|\leq 1,\,
\psi \in C^{1}\right\}.
$$
Denoting by
 $\langle\cdot, \cdot\rangle_{E}$ the Euclidean scalar product, we write
$$|v_{H}|:=\left(\sum_{j=1}^{N_{1}}\langle v, X_{j}\rangle_{E}^{2}\right)^{\frac{1}{2}}$$
and $$|v_{H}|_{j}:=
\frac{\langle v, X_{j}\rangle_{E}}{|v_{H}|},$$
where $v$ is a vector, which will be later the outer
unit (with respect to the horizontal stratum) vector on $\partial\Omega$.
If $dS$ is the surface measure on $\partial\Omega$, we have
$$
d\sigma_{H}=|v_{H}| dS,
$$
and all these relations are well-defined because the perimeter measure
of the set of characteristic points of a smooth domain $\Omega$ is zero.
We can now calculate
$$\int_{\Omega}X_{j} \varphi\, d\nu=\int_{\Omega} {\rm div} (\varphi X_{j})d\nu=\int_{\partial\Omega}
\varphi\,\imath_{X_{j}}(d\nu)=\int_{\partial\Omega}\varphi
\langle v, X_{j}\rangle_{E}dS$$$$=
\int_{\partial\Omega}\varphi
\frac{\langle v, X_{j}\rangle_{E}}{|v_{H}|}|v_{H}|dS
=\int_{\partial\Omega}\varphi |v_{H}|_{j} \,d\sigma_{H}.
$$
Combining this with \eqref{div1} we obtain
\begin{equation}\label{div2}
\int_{\partial\Omega}\varphi\langle X_{j}, d\nu \rangle=\int_{\partial\Omega}\varphi |v_{H}|_{j} \, d\sigma_{H}.
\end{equation}
Let us now assume that $X_{j}$ are orthonormal on $\mathfrak g$, and let
$$X=\sum_{j=1}^{N_{1}}f_{j}X_{j}.$$
We write $$v_{H}=\sum_{j=1}^{N_{1}}\langle v,X_{j}\rangle_{E}X_{j}$$ for a vector $v$ with
$|v_{H}|=1$. Then we have
$$\langle X,v_{H}\rangle_{\mathfrak g}=\sum_{j=1}^{N_{1}} f_{j} |v_{H}|_{j}.$$
Now, applying \eqref{div2} with $\varphi=f_{j}$ and summing over $j$, we get
\begin{equation}\label{div3}
\int_{\partial\Omega} \sum_{j=1}^{N_{1}} f_{j}\langle X_{j}, d\nu\rangle=
\int_{\partial\Omega} \langle X, v_{H}\rangle_{\mathfrak g} \,d\sigma_{H},
\quad X=\sum_{j=1}^{N_{1}}f_{j}X_{j},
\end{equation}
which gives a link between the form in \eqref{EQ:S2} and the perimeter measure
$d\sigma_{H}.$

\end{document}